\newcommand{\RNum}[1]{\uppercase\expandafter{\romannumeral #1\relax}}
\newcommand\cA{\mathcal{A}}
\newcommand\sA{\mathscr{A}}
\newcommand\sT{\mathscr{T}}
\newcommand{\risom}{\stackrel{\sim}{\to}}
\newcommand{\cont}{\mathit{cont}}
\date{\today} 
\tikzset{help lines/.style={step=#1cm,very thin, color=gray},
help lines/.default=.5} 
\theoremstyle{plain}
\newtheorem{theorem}{Theorem}[section]
\newtheorem{lemma}[theorem]{Lemma}
\newtheorem{proposition}[theorem]{Proposition}
\newtheorem{prop}[theorem]{Proposition}
\newtheorem{cor}[theorem]{Corollary}
\theoremstyle{definition}
\newtheorem{definition}[theorem]{Definition}
\newtheorem{inductive step}[theorem]{Inductive step}
\newtheorem{inductive lemma}[theorem]{Inductive Lemma}
\theoremstyle{remark}
\newtheorem{remark}[theorem]{Remark}
\newtheorem*{remark*}{Remark}
\newtheorem*{example*}{Example}
\newcommand{\wh}{\widehat}
\newcommand{\ol}{\overline}
\newcommand{\p}{\partial}
\newcommand{\om}{\omega}
\newcommand{\eps}{\varepsilon}
\newcommand{\R}{{\mathbb{R}}}
\newcommand{\Aut}{{\rm Aut}}
\newcommand{\sgn}{{\rm sgn\,}}
\newcommand{\codim}{{\rm codim}}
\newcommand{\Span}{\mathrm{Span}}
\newcommand{\cT}{{\widehat{\sT}}}
\newcommand{\ssT}{\mathrm{t}}
\def\Op{{\mathcal O}{\it p}\,}
\numberwithin{figure}{section}
\newcommand{\symp}{\mathit{symp}}
 \newcommand{\Arb}{\mathrm{Arb}} 
\title{Arboreal models and their stability}
\author{Daniel  \'Alvarez-Gavela}
\address{Department of Mathematics \\ Massachusetts Institute of Technology \\ Cambridge, MA, 02139}
\email{dgavela@mit.edu}
\thanks{DA was partially supported by NSF grant DMS-1638352 and the Simons Foundation}
\author{Yakov Eliashberg }
\address{Department of Mathematics\\Stanford University \\ Stanford, CA 94305}
\email{eliash@stanford.edu}
\thanks{YE was partially supported by NSF grant DMS-1807270. }
\author{David Nadler}
\address{Department of Mathematics\\University of California, Berkeley\\Berkeley, CA  94720-3840}
\email{nadler@math.berkeley.edu}
\thanks{DN was partially supported by NSF grant DMS-1802373.}
\begin{document}
\begin{abstract}
 This is the first in a series of papers~\cite{AGEN19, AGEN20b, AGEN21} by the authors on the arborealization program. The  main goal of the paper is the  proof of uniqueness  of arboreal models, defined   as the closure of the class of smooth germs of Lagrangian submanifolds under the operation of taking iterated transverse Liouville cones. The parametric version of the stability result implies that the space of germs of symplectomorphisms that preserve a canonical model is weakly homotopy equivalent to the space of automorphisms of the corresponding signed rooted tree. Hence the local symplectic topology around a canonical model reduces to combinatorics, even parametrically.
 \end{abstract}

\maketitle

 \onehalfspacing
 \tableofcontents
  \section{Introduction}\label{sec:intro}
 
 \subsection{Main results}

 This is the first in a series of papers~\cite{AGEN19, AGEN20b, AGEN21} by the authors on the arborealization program.

 The initial goal of this program is to determine when a Weinstein manifold can be deformed to have an arboreal skeleton, i.e.~a skeleton which is a stratified Lagrangian with arboreal singularities. The main results of \cite{AGEN20b} show this can be achieved for polarized Weinstein manifolds, and moreover,   the resulting arboreal space determines the ambient Weinstein manifold. The current paper provides essential local results underlying these global developments, as will be discussed below.

 
 The class of arboreal singularities was first defined by the third author in the paper ~\cite{N13} for abstract $n$-dimensional complexes (without any smooth structure).  Their Lagrangian and Legendrian realizations were also exhibited  in  ~\cite{N13} by means of explicit models. In ~\cite{St18}  and \cite{E18} these models were further decorated by signs.
 It is important to point out that the definition in ~\cite{N13} fixes  only the {\em homeomorphism}, and not {\em diffeomorphism}   type of the singularity.   While this is sufficient for many applications, for example calculating  of some invariants,     the {\em homeomorphism} type of an arboreal skeleta does not determine in general the symplectomorphism type of the ambient manifold,  even if the skeleton is smooth (e.g. see ~\cite{Ab12}).  On the other hand, it is not even possible to discuss the  problem of whether the symplectic  topology of the Weinstein manifold is determined by its  skeleton   before establishing canonical up to symplectomorphism local models of singularities. 

We prove in the current paper   that  the  singularities in the class of {\em  signed arboreal  Lagrangian and Legendrian singularities} introduced by  Definition \ref{def:arb intr} below     are {\em determined up to ambient symplectomorphism} by their combinatorial type,  and present their canonical models.
The uniqueness problem was not even considered   in the prior papers on this subject.  

 We find it  surprising that  there is a solution to the problem of finding such canonical models.  Indeed, we do not know any other sufficiently large classes of Lagrangian singularities which admit a discrete classification up to ambient symplectomorphism.
 Following Definition~\ref{def:arb intr}, given a choice of canonical model,  if we  take its Legendrian lift, apply a contactomorphism taking it into generic position, and then form its Liouville cone, we must once again obtain a canonical model. 
 This is essentially equivalent to the statement that any sufficiently small deformation of a Legendrian model by a contactomorphism of $PT^*\R^n$ can be realized by an element of a smaller subgroup   of contactomorphisms  formed by contact lifts  of diffeomorphisms of $(\R^n,0)$.   
 
 To discuss this in more detail,  we first introduce some auxiliary notions. A closed 
subset of a symplectic  or  contact  manifold is called {\em isotropic} 
 if is stratified by  isotropic submanifolds. It is called Lagrangian or Legendrian if it is isotropic and purely of the maximal possible dimension. The germ at the origin of a  locally simply-connected isotropic subset
   $L\subset T^*\R^n$ of the cotangent bundle with its standard Liouville structure $\lambda = pdq$ admits a unique   lift to an isotropic germ at the origin $\wh L\subset J^1\R^n=T^*\R^n\times\R$ of the 1-jet bundle. 
   Given an isotropic subset $\Lambda\subset S^*\R^n$ of the cosphere bundle, its Liouville cone $C(\Lambda) \subset T^*\R$, i.e.~the closure of its saturation by trajectories of the Liouville vector field $Z = p\frac{\p}{\p p}$,
   is an isotropic subset.
    
    \begin{definition}\label{def:arb intr}
  {\it Arboreal  Lagrangian (resp.~Legendrian) singularities}  form the smallest class $\Arb^{\symp}_n$  (resp.~$\Arb^{\cont}_n$) of germs of    closed isotropic subsets in  $2n$-dimensional symplectic (resp.~$(2n+1)$-dimensional   contact) manifolds such that the following properties are satisfied:
    \begin{enumerate}
    \item(Invariance) $\Arb^\symp_n$ is invariant with respect to symplectomorphisms and $\Arb^\cont_n$ is invariant with respect to contactomorphisms.
    \item (Base case) $\Arb^\symp_0$ contains $pt = \R^0 \subset T^*\R^0 = pt$.
    
    \item (Stabilizations) If  $L \subset (X, \omega)$ is  in $\Arb^\symp_n$,  then  the  product $L \times \R \subset (X \times T^*\R, \omega + dp\wedge dq)$
    is in $Arb^\symp_{n+1}$. 
    \item (Legendrian lifts) If $L\subset T^*\R^n$ is in $\Arb^\symp_n$, then its Legendrian lift $\wh L\subset J^1\R^n$ is in $\Arb^\cont_n$.
    \item (Liouville cones) Let $\Lambda_1,\dots, \Lambda_k \subset  S^*\R^n$ be a  finite disjoint  union of arboreal Legendrian germs from $Arb^\cont_{n-1}$ centered at points $z_1,\dots, z_k \in S^* \R^n$. Let $\pi:S^*\R^n\to \R^n$ be   the front projection. Suppose
    \begin{itemize}\item[-] $\pi(z_1)=\dots=\pi(z_k)$;
    \item[-] For any $i$, and smooth submanifold $Y \subset \Lambda_i$, the restriction  $\pi|_Y:Y\to\R^n$ is an embedding (or equivalently, an immersion, since we only consider germs).
     \item[-] For any distinct $i_1, \ldots, i_\ell$, and any  smooth submanifolds $Y_{i_1}\subset \Lambda_{i_1}, \dots,Y_{i_\ell}\subset \Lambda_{i_\ell}$,  the restriction 
    $\pi|_{Y_{i_1}\cup \dots\cup Y_{i_\ell}}: Y_{i_1}\cup \dots\cup Y_{i_\ell} \to \R^n$ is self-transverse.
    \end{itemize} 
    Then the union $\R^n \cup C(\Lambda_1)\cup\dots\cup C(\Lambda_k)$ of the Liouville cones  with the zero-section form an arboreal Lagrangian germ from    $\Arb^\symp_n$.
    \end{enumerate}

 With the above classes defined, we can also allow boundary by additionally taking the product  $L \times \R_{\geq 0} \subset (X \times T^*\R, \omega + dp\land  dq)$
    for any arboreal Lagrangian $L \subset (X, \omega)$, and similarly  for arboreal Legendrians.  

    \end{definition}
%
    
In Section \ref{sec:stab}, we prove the Stability Theorem \ref{thm:unique-signed} for arboreal models as characterized by
Definition~\ref{def:arb intr}. Its main application is the following: for fixed dimension $n$, up to ambient symplectomorphism or contactomorphism, Definition~\ref{def:arb intr} produces only finitely many local models. 

More precisely, to each member of  the class $\Arb^{\symp}_n$, one can assign a signed rooted tree $\cT= (T, \rho, \eps)$ 
    with at most $n+1$ vertices.
    Here $T$ is a  finite acyclic graph, $\rho$
      is a distinguished root vertex, and  $\eps$ 
           is a sign function on the edges of $T$ not adjacent to $\rho$. This discrete data completely determines the germ:
         
         \begin{theorem}\label{thm: main thm}
         If two arboreal Lagrangian singularities $L \subset (X, \omega)$, $L' \subset (X', \omega')$ of the class $\Arb^{\symp}_n$
         have the same dimension and  signed rooted tree $\cT$, then there is (the germ of) a symplectomorphism 
         $(X, \omega) \simeq (X', \omega')$ identifying $L$ and $L'$.

         \end{theorem}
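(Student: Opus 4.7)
The plan is to proceed by strong induction on the dimension $n$, using the recursive structure of Definition~\ref{def:arb intr}, with the Stability Theorem~\ref{thm:unique-signed} doing the heavy lifting at the inductive step. Throughout I would simultaneously prove the Legendrian variant for $\Arb^{\cont}_n$, which is essentially equivalent to the Lagrangian statement via the Legendrian lift move (iv).

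For the base case $n=0$, the only member of $\Arb^{\symp}_0$ is the point, realized by a unique signed rooted tree, so there is nothing to prove. For the inductive step, assume the theorem holds in all strictly smaller dimensions. Given $L\in \Arb^{\symp}_n$ with signed rooted tree $\cT$, I would read off from the shape of $\cT$ at the root how $L$ was produced by the moves of Definition~\ref{def:arb intr}. If $\cT$ records a stabilization, then $L$ is (the germ of) a product $L_0\times \R$ with $L_0\in \Arb^{\symp}_{n-1}$, whose tree is obtained by pruning the stabilization edge from $\cT$; the inductive hypothesis combined with the uniqueness of the Darboux normal form for the extra $T^*\R$ factor finishes this case.

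Otherwise $L = \R^n\cup C(\Lambda_1)\cup\dots\cup C(\Lambda_k)$ with each $\Lambda_i\subset S^*\R^n$ an arboreal Legendrian from $\Arb^{\cont}_{n-1}$, and the subtrees at the children of the root, together with the edge signs, record the germs $\Lambda_i$. Two preliminary reductions are routine: first, by Weinstein's Lagrangian neighborhood theorem one can arrange that the smooth zero-section strata of $L$ and $L'$ agree as the germ of $\R^n\subset T^*\R^n$; second, the inductive hypothesis applied to each $\Lambda_i$ identifies it (as a germ in $S^*\R^n$) with a chosen standard model having the same signed rooted tree. What remains is then the \emph{configuration problem}: controlling how the disjoint union $\Lambda_1\sqcup\dots\sqcup \Lambda_k$ sits inside $S^*\R^n$, subject only to the front-projection embedding and self-transversality conditions of Definition~\ref{def:arb intr}(v).

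The crux, and the main obstacle, is that these configurations come in a nontrivial finite-dimensional moduli; one must show that any two such generic placements with the same signed combinatorial data are related by an ambient contactomorphism of $S^*\R^n$ fixing the common front-projection basepoint. This is precisely what the Stability Theorem~\ref{thm:unique-signed} delivers: it rectifies the apparent freedom and supplies a contact isotopy interpolating between any two such configurations. The resulting ambient contactomorphism of $S^*\R^n$ extends to a Liouville-equivariant symplectomorphism of $T^*\R^n$ taking the Liouville cones $C(\Lambda_i)$ to $C(\Lambda'_i)$; combined with the already-fixed identification of zero-sections this yields the germ of symplectomorphism $(X,\omega)\simeq (X',\omega')$ carrying $L$ to $L'$, completing the induction.
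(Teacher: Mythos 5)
Your proposal is correct and follows essentially the same route as the paper: Theorem \ref{thm: main thm} is obtained by inducting over the recursive structure of Definition \ref{def:arb intr}, with the routine steps (invariance, stabilization, Legendrian lift) handled trivially and the Stability Theorem \ref{thm:unique-signed} doing all the work at the Liouville-cone step, which is exactly how the paper (implicitly) derives it. Two small inaccuracies worth fixing but not affecting the argument: stabilization does not change the signed rooted tree (it only makes the ambient dimension $m$ exceed $|n(\cT)|$), so there is no ``stabilization edge'' to prune --- one simply applies the inductive hypothesis to the same tree in one lower dimension; and Theorem \ref{thm:unique-signed} normalizes the configuration by producing a diffeomorphism of the front to the canonical front $H_\cT\times\R^{m-n(\cT)}$ (whose cotangent lift gives the desired symplectomorphism), rather than by an ambient contact isotopy of $S^*\R^n$ interpolating between two configurations.
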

         
         Similarly, each member of the class  $\Arb^{\cont}_n$ is determined by an associated signed rooted tree $\cT= (T, \rho, \eps)$ 
    with at most $n+1$ vertices.

         As a representative for each signed rooted tree $\cT$, one may take the local model $L_\cT \subset T^*\R^n$ detailed in Section \ref{sec: arboreal}, where
          $n=|n(\cT)|$ is one less than the number of vertices in the tree. 
          The model  $L_\cT \subset T^*\R^n$  is given as the positive conormal to an explicit front $H_\cT \subset \R^n$ defined by elementary equations. Our exposition in Section \ref{sec: arboreal} is self-contained and the reader need not be familiar with \cite{N13} or \cite{St18}. In particular, we emphasize the inductive nature of the constructions, and  reformulate signs to match inductive arguments to come. 
          
          In Section \ref{sec:stab}, we also establish a parametric version of the Stability Theorem  \ref{thm:unique-signed}, whose main consequence can be formulated as follows:
         
         \begin{theorem}\label{thm: contr auts}
Fix a signed rooted tree $\cT = (T, \rho, \eps)$, set $n = |n(\cT)|$ and consider  the arboreal $\cT$-front  $H_\cT \subset \R^n$. Let $D(\R^n, H_\cT)$ be the group of germs at $0$  of diffeomorphisms of $\R^n$ preserving $H_\cT$ as a front, i.e.~ as a subset along with its coorientation.

Then the fibers of the natural map $D(\R^n, H_\cT) \to \Aut(\cT)$ are weakly contractible.
\end{theorem}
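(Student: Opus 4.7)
The plan is to argue by induction on the number $|n(\cT)|+1$ of vertices of the tree, combined with the parametric version of Theorem~\ref{thm:unique-signed}. The base case $n=0$ is vacuous.

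For the inductive step, I would let $v_1, \ldots, v_k$ denote the children of the root $\rho$, with associated subtrees $\cT_{v_i}$. By the inductive construction outlined in Section~\ref{sec: arboreal} and Definition~\ref{def:arb intr}, the front $H_\cT$ is assembled near the origin from the fronts $H_{\cT_{v_i}}$ via Legendrian lifts and Liouville cones, and it carries a canonical stratification indexed by the nodes of $\cT$. A germ in the fiber $K := \ker(D(\R^n, H_\cT) \to \Aut(\cT))$ must preserve each stratum setwise and respect its coorientation.

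Next I would single out a deepest singular stratum $\Sigma \subset H_\cT$ associated to a proper subtree $\cT'$ of $\cT$. Restricting a germ $\phi \in K$ to $\Sigma$, together with appropriate normal jet data, defines a map $r : K \to K'$, where $K'$ is the analogous fiber for the smaller front $H_{\cT'}$. Two things must then be established: first, that $r$ is a Serre fibration, which I would verify by a standard stratified isotopy-extension argument; and second, that the fiber of $r$, consisting of germs that are the identity on $\Sigma$ and on its first-order normal data, is weakly contractible. For the latter I would use a Moser-type argument, integrating a time-dependent vector field tangent to $H_\cT$ and vanishing to sufficient order on $\Sigma$. The inductive hypothesis gives weak contractibility of $K'$, and the long exact sequence of $r$ then yields the same for $K$.

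The main obstacle is carrying out the Moser step in the presence of genuine arboreal (not normal-crossing) transverse singularities of $H_\cT$ along $\Sigma$: one cannot simply linearly interpolate between two front-preserving diffeomorphisms. This is precisely where the parametric version of Theorem~\ref{thm:unique-signed} is indispensable, since it should supply a continuously parameter-dependent normal form for $H_\cT$ in a tubular neighborhood of each stratum, reducing the infinitesimal problem to a bundle-wise statement on a standard model. Verifying that the sheaf of front-preserving infinitesimal automorphisms is fine enough to support Moser integration parametrically, while respecting the coorientation, will be the technical heart of the argument.
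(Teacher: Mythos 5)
Your overall strategy—induction over the tree with a tower of restriction maps to deeper strata, Serre fibrations, and a Moser step for the fibers—is genuinely different from the paper's argument, and as written it has a gap at exactly the point you flag as the ``technical heart.'' The paper's proof (Corollary~\ref{cor: contr auts}) does not decompose $D(\R^n,H_\cT)$ at all. Instead it makes two observations: (1) every germ in the fiber over the identity automorphism fixes $0$, preserves the canonical flag in $T_0\R^n$ determined by the partial intersections of the smooth pieces, and respects the coorientations, so a $k$-sphere $f_t$ of such germs bounds a $(k+1)$-ball $g_t$ inside the (weakly contractible) space of \emph{arbitrary} germs of diffeomorphisms with this flag data; and (2) the relative form of the parametric stability theorem (Theorem~\ref{thm:unique-signed-parametric}, with $Y=B^{k+1}$, $K=S^k$) produces correcting diffeomorphisms $h_t$ with $h_t\circ g_t$ front-preserving and $h_t=\mathrm{Id}$ on the boundary sphere, so $h_t\circ g_t$ fills $f_t$ inside the fiber. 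Step (1)---extending a sphere of front-preserving germs to a ball of \emph{ambient} germs so that the parametric theorem has something to normalize---is absent from your plan, and without it Theorem~\ref{thm:unique-signed-parametric} cannot be brought to bear on a sphere living entirely in the fiber.

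Concretely, the unresolved point in your proposal is the claimed weak contractibility of $\ker(r)$, the germs that are the identity on $\Sigma$ together with first-order normal data. A Moser argument needs a path of fronts (or of diffeomorphisms) to differentiate, and as you note, linear interpolation between a front-preserving germ and the identity leaves the class of front-preserving maps because the arboreal singularities are not normal crossings. You propose to repair this with a ``parameter-dependent normal form'' from Theorem~\ref{thm:unique-signed-parametric}, but that theorem, used in its relative form, already contracts spheres in the whole fiber directly---at which point the stratification tower, the Serre fibration claim (which would itself require a stratified isotopy extension theorem of comparable difficulty), and the long exact sequence are superfluous. I would recommend abandoning the tower and instead isolating the flag-preservation statement, proving that the space of germs preserving the point, the flag, and the coorientations is weakly contractible (it retracts onto upper-triangular matrices with positive diagonal), and then applying Theorem~\ref{thm:unique-signed-parametric} relatively, which is the paper's route.
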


Hence, the local symplectic topology of an arboreal singularity is completely characterized by the combinatorics of the underlying signed rooted tree, even parametrically. 

We conclude this introduction by briefly explaining the role of Theorem \ref{thm: main thm} within the global results of the arborealization program. It was shown in \cite{N15} that singularities of  Whitney stratified Lagrangians can always be locally deformed  to arboreal Lagrangians in a non-characteristic fashion, i.e.~without changing their microlocal invariants. The question of whether a global theory exists at the level of Weinstein structures is more subtle.
In two dimensions the story is classical: generic ribbon graphs provide arboreal skeleta. In four dimensions, Starkston proved in ~\cite{St18} that
 arboreal skeleta always exist in the Weinstein homotopy class of any Weinstein domain.

 In the sequel \cite{AGEN20b}, we show any polarized Weinstein manifold,
 i.e.~a Weinstein manifold with a global field of Lagrangian planes in its tangent bundle, 
  can be deformed to have an arboreal skeleton. More specifically, the arboreal singularities that arise are positive in the sense that they are indexed by signed rooted trees with all positive signs  $+1$, and  conversely, any Weinstein manifold with a positive arboreal skeleton comes with a canonical (homotopy class of) polarization.
  
 Now, the arguments of  \cite{AGEN20b} produce skeleta with singularities satisfying the characterization of Definition~\ref{def:arb intr}. So without the uniqueness of Theorem~\ref{thm: main thm}, we would still be left to study the possible moduli of such singularities: it could happen that two skeleta built with the same combinatorics lead to different Weinstein manifolds. The uniqueness of Theorem~\ref{thm: main thm} guarantees this is not the case: there is no moduli of the singularities arising, and indeed  their geometry is uniquely specified by the combinatorics. (In fact, the situation is even better: thanks to the arboreal Darboux-Weinstein theorem proved in  \cite{AGEN20b}, any symplectic thickenings  of the skeleton that induce equivalent orientation structures, a further combinatorial decoration on the skeleton, are themselves equivalent.) Thus pairing the results of the current paper  with those of \cite{AGEN20b} one is able to express polarized Weinstein manifolds in  combinatorial terms. In a forthcoming paper \cite{AGEN21} we will classify all bifurcations ($=$ "Reidemeister moves") relating arboreal skeleta of two    polarized Weinstein manifolds  related by a polarized Weinstein homotopy, thus  reducing the problem of  classification of (polarized) Weinstein structures  up to deformational equivalence to the problem of classification of arboreal complexes up to diffeomorphism and Reidemeister moves.

 \subsection{Acknowledgements} 
 We are very grateful to Laura Starkston who collaborated with us on the initial stages of this project. 
The first author is grateful for the great working environment he enjoyed at Princeton University and the Institute for Advanced Study, as well as for the hospitality of the Centre de recherches math\'ematiques of Montreal. 
The second author thanks RIMS Kyoto and ITS ETH Zurich for their hospitality. The third author thanks MSRI for its hospitality.
Finally, we are very grateful for the support of the American Institute of Mathematics, which hosted a workshop on the arborealization program in 2018 from which this project has greatly benefited.

\section{Arboreal models}\label{sec: arboreal}

\subsection{Quadratic fronts}\label{s: quad fronts}

Before we present the local models for arboreal singularities, we introduce the quadratic fronts out of which the models will be built and discuss some of their basic properties.

\subsubsection{Basic constructions}

For $i\geq 0$, define functions $h_i:\R^i \to \R$ by the inductive formula
$$
\xymatrix{
h_0 = 0
&
h_i = h_i(x_1, \ldots, x_i) = x_1 - h_{i-1}(x_2, \ldots, x_{i})^2
}
$$
For example, for small $i$, we have
$$
\xymatrix{
h_1(x_1) = x_1
&
h_2(x_1, x_2) = x_1 - x_2^2
&
h_3(x_1, x_2, x_3) =x_1 - (x_2 - x_3^2)^2
}
$$
Fix $n\geq 0$. For $i = 0, \ldots, n$,  define smooth graphical hypersurfaces 
$$
\xymatrix{
 {}^n\Gamma_i = \{ x_{0} =  h_i^2 \}  \subset \R^{n+1}
}
$$
equipped with the graphical coorientation,
and consider their union
$$
\xymatrix{
 {}^n\Gamma = \bigcup_{i=0}^n   {}^n \Gamma_i
}
$$
Note the elementary identities
$$\xymatrix{
{}^n\Gamma_i =  {}^i\Gamma_i \times \R^{n-i} & i = 0, \ldots, n
}
$$
$$
\xymatrix{
 {}^n \Gamma_i  \cap  {}^n\Gamma_0  =  {}^{n-1} \Gamma_{i -1} 
 & i = 1, \ldots, n
}
$$

          \begin{figure}[h]
\includegraphics[scale=0.53]{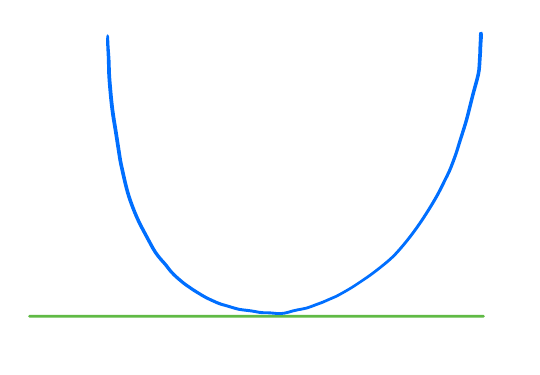}
\caption{The hypersurfaces ${}^1\Gamma_0$ (green) and ${}^1\Gamma_1$ (blue)}
\label{fig:Modelone}
\end{figure}

    \begin{figure}[h]
\includegraphics[scale=0.5]{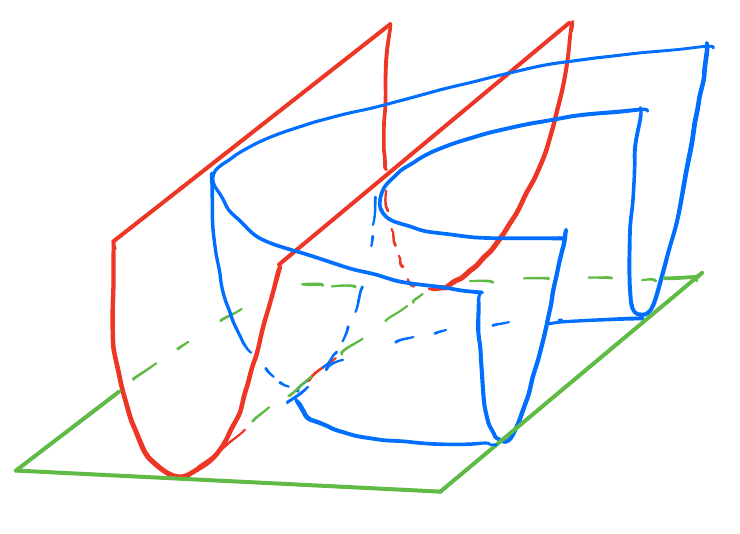}
\caption{The hypersurfaces ${}^2\Gamma_0$ (green), ${}^2\Gamma_1$ (red) and ${}^2\Gamma_2$ (blue).}
\label{fig:canonicals}
\end{figure}

Let $T^*\R^n$ denote the cotangent bundle with canonical 1-form $pdx = \sum_{i = 1}^np_idx_i$ where
$p=(p_1, \ldots, p_n)$ are dual coordinates to $x = (x_1, \ldots, x_n)$. 
Let $J^1\R^n = \R \times T^*\R^n$ denote the 1-jet bundle with contact form $dx_0 + pdx = dx_0 + \sum_{i = 1}^np_idx_i$.


Given a function $f:\R^n \to \R$ with graph $\Gamma_f = \{x_0 = f(x)\} \subset \R \times \R^n$, we have the conormal Lagrangian of the graph 
$L_{\Gamma_f} = \{ x_0 = f(x), \, p_i= - p_0  \partial f / \partial x_i  \} \subset T^*\R^{n+1}$, and
the conormal Legendrian of the graph $\Lambda_{\Gamma_f}  = \{ x_0 = f(x), \, p_0 = 1, \, p_i = - \partial f / \partial x_i\} \subset J^1 \R^n$.


%

For $i = 0$, let ${}^{n} L_0 =   \R^{n} \subset T^*\R^{n}$ denote the zero-section.
For $i = 1, \ldots, n$,  introduce the  conormal Lagrangian
$$
\xymatrix{
 {}^{n} L_{i} =  L_{{}^{n-1} \Gamma_{i-1}} \subset T^* \R^{n}
}
$$
of the graph ${}^{n-1} \Gamma_{i-1} \subset \R^n$, and consider their union
$$
\xymatrix{
{}^{n} L = \bigcup_{i = 0}^n {}^n L_i
}
$$

Similarly, for $i = 0, \ldots, n$, introduce the conormal Legendrian 
$$
\xymatrix{
 {}^{n} \Lambda_i = \Lambda_{{}^{n} \Gamma_{i}} \subset  J^1\R^{n}
}
$$
of the graph ${}^n\Gamma_i \subset \R^{n+1}$, 
and consider their union
$$
\xymatrix{
 {}^n \Lambda = \bigcup_{i =0}^n {}^{n} \Lambda_i
}
$$

Note that the Liouville form vanishes on the conical Lagrangian ${}^nL_i \subset T^*\R^n$, hence its lift to $J^1 \R^n = \R \times T^*\R^n$ with zero primitive is a Legendrian. We have the following compatibility:
 
\begin{lemma}\label{lem: tilt}
The contactomorphism
$$
\xymatrix{
S: J^1 \R^n \ar[r] & J^1 \R^n
}
$$
$$
\xymatrix{
S(x_0, x, p) = (x_0 - p_1^2/4, x_1 + p_1/2, x_2, \ldots, x_n, p_1, \ldots, p_n)
}
$$
takes the  Legendrian  $ {}^{n} \Lambda_i $  isomorphically to
the Legendrian
$\{0\} \times {}^n L_i $, and
thus the union $ {}^{n} \Lambda$  isomorphically to the union
$\{0\} \times {}^n L$.

\end{lemma}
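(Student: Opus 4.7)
The plan is a direct computation in coordinates, organized around the recursive identity $h_i = x_1 - h_{i-1}(x_2,\ldots,x_i)^2$ so that the contactomorphism $S$ is seen to "peel off one layer" of squaring. First, I would verify that $S$ is indeed a (strict) contactomorphism by pulling back $\alpha = dx_0 + \sum_{k=1}^n p_k\, dx_k$: the only nontrivial terms come from the first summand and the $k=1$ term, and they contribute $-\tfrac12 p_1\, dp_1$ and $+\tfrac12 p_1\, dp_1$ respectively, which cancel, leaving $S^*\alpha = \alpha$.

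Next I would parametrize ${}^n\Lambda_i = \Lambda_{{}^n\Gamma_i}$ explicitly: on this Legendrian one has $x_0 = h_i^2$ and $p_k = -\partial_{x_k}(h_i^2) = -2 h_i\, \partial_{x_k} h_i$. Since $\partial_{x_1} h_i = 1$, this gives the key identity $p_1 = -2 h_i$. Using the recursion $\partial_{x_k} h_i = -2 h_{i-1}\, \partial_{y_{k-1}} h_{i-1}$ for $k = 2,\ldots,i$ (and $0$ for $k > i$), one finds $p_k = 4 h_i h_{i-1}\, \partial_{y_{k-1}} h_{i-1} = -2 p_1\, h_{i-1}\, \partial_{y_{k-1}} h_{i-1}$ for $k \geq 2$. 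Applying $S$ is then almost tautological:
\begin{itemize}
\item $x_0 - p_1^2/4 = h_i^2 - h_i^2 = 0$;
\item $x_1 + p_1/2 = x_1 - h_i = h_{i-1}(x_2,\ldots,x_i)^2$;
\item the remaining $x_k, p_k$ are unchanged.
\end{itemize}

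Under the identification $\{0\} \times T^*\R^n \hookrightarrow J^1\R^n$ with base coordinates $(y_0, y_1,\ldots,y_{n-1}) = (x_1, x_2,\ldots,x_n)$ and dual coordinates $(q_0,q_1,\ldots,q_{n-1}) = (p_1,p_2,\ldots,p_n)$, the image of ${}^n\Lambda_i$ under $S$ is exactly cut out by $y_0 = h_{i-1}(y_1,\ldots,y_{i-1})^2$ together with $q_k = -2 q_0\, h_{i-1}\, \partial_{y_k}h_{i-1}$ for $k = 1,\ldots,n-1$ (and $q_k = 0$ for $k > i-1$). By definition this is precisely the conormal Lagrangian $L_{{}^{n-1}\Gamma_{i-1}} = {}^n L_i$, considered as a Legendrian in $\{x_0 = 0\} \subset J^1\R^n$ via the zero primitive. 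The base case $i = 0$ is immediate since ${}^n\Lambda_0 = \{x_0 = 0, p = 0\}$ is fixed pointwise by $S$. Taking the union over $i = 0,\ldots,n$ yields the claim for $ {}^n\Lambda$.

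The main obstacle, such as it is, is purely bookkeeping: one must keep straight two separate relabelings of coordinates, namely (i) the identification of $\R^n$ (base of $T^*\R^n$) with the hyperplane $\{x_0 = 0\}$ in the base of $J^1\R^n$, and (ii) the implicit shift of indices coming from the recursion $h_i(x_1,\ldots,x_i) = x_1 - h_{i-1}(x_2,\ldots,x_i)^2$. Once these are reconciled, the relation $p_1 = -2h_i$ is what makes the translation $x_1 \mapsto x_1 + p_1/2$ cancel one squaring, and the companion shift $x_0 \mapsto x_0 - p_1^2/4$ is forced on $S$ by the requirement that $\alpha$ be preserved.
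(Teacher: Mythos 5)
Your proposal is correct and follows essentially the same route as the paper's proof: the key identity $p_1=-2h_i$ on ${}^n\Lambda_i$, the recursion $h_i = x_1 - h_{i-1}(x_2,\ldots,x_i)^2$ to compute $\hat x_0 = 0$ and $\hat x_1 = h_{i-1}^2$, and the matching of the remaining momentum equations with the defining equations of the conormal $L_{{}^{n-1}\Gamma_{i-1}}$ under the index shift. The only additions beyond the paper's argument are the (welcome) explicit check that $S^*\alpha=\alpha$ and the more careful bookkeeping of the coordinate relabeling.
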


\begin{proof}
Set 
$
h_{i, 1} = h_{ i -1}(x_{2}, \ldots, x_i)
$ so that $h_i = x_1 - h_{i,1}^2$.
Observe 
${}^{n}\Lambda_i \subset J^1 \R^{n}$ is given by the equations
$$
\xymatrix{
x_0 = h_i^2 & pdx = -dh_i^2 =  -2h_i dh_i = -2h_i( dx_1 - 2h_{i,1} dh_{i,1})
}
$$
so in particular $p_1 = -2h_i$ and $\sum_{i = 2}^n p_i dx_i = 4 h_i  h_{i,1} d h_{i, 1}$.

If we write $(\hat x_0, \hat x, p) = S(x_0, x, p)$, for  $(x_0, x, p) \in {}^{n} \Lambda_i$, then we have
$$
\xymatrix{
\hat x_0 = x_0 - p_1^2/4 = \pm(x_0 -h_i^2) = 0   & \hat x_1 = x_1 +p_1/2 = x_1 - h_i = x_1 - (x_1 - h_{i, 1}^2) = h_{i, 1}^2
}
$$

Now it remains to observe  
${}^{n} L_i \subset T^* \R^{n}$ is given by the equations
$$
\xymatrix{
x_1 = h_{i, 1}^2 & \sum_{i = 2}^n p_i dx_i = -p_1 dh^2_{i,1} = -2p_1 h_{i,1} d h_{i, 1} 
}
$$
This completes the proof. \end{proof}

\subsubsection{Distinguished quadrants}\label{sec: quads} We now specify some distinguished quadrants of the ${}^n\Gamma$ which we will use to define our arboreal models. Which of these quadrants are cut out by our sign conventions will become clearer when the arboreal models are introduced.

For $0 \leq j < i \leq n$, set 
$$
\xymatrix{
h_{i, j} := h_{ i - j}(x_{j+1}, \ldots, x_i)
}
$$
so in particular $h_{i, 0} = h_i(x_1, \ldots, x_i)$ and $h_{i, i-1} = h_1(x_i) = x_i$.

For fixed $0 \leq  i \leq n$, consider the collection of functions
$$
\xymatrix{
h_{i, 0}, \ldots, h_{i, i-1}
}
$$
Note the triangular nature of the linear terms of the collection: for all $0 \leq j \leq i-1$, the subcollection
$$
\xymatrix{
h_{i, j} - x_{j+1}, h_{i, j+1}, \ldots, h_{i, i-1} 
}
$$ 
is independent of $x_{j+1}$. Thus the level sets of the collection are mutually transverse.

Fix once and for all a list of signs $\delta = (\delta_0, \delta_1, \ldots, \delta_n)$, $\delta_i \in \{\pm 1\}$.  
Define the domain quadrant $ {}^n  Q^\delta_i \subset \R^n$ to be cut out by the inequalities
$$
\xymatrix{
\delta_1 h_{i, 0} \leq 0, \ldots, \delta_i h_{i, i-1} \leq 0
}
$$
By the transversality noted above, ${}^n  Q^\delta_i$ is a submanifold with corners diffeomorphic to $\R_{\geq 0}^i \times \R^{n-i}$.
Its codimension one boundary faces are  given by the vanishing of one of the functions~$h_{i, j}$.

Note  ${}^n Q^\delta_i $ only depends on the truncated list $\delta_1, \ldots, \delta_i$. In particular, it is independent of 
$\delta_0$ which will enter the constructions next.

Define the cooriented hypersurface  ${}^n  \Gamma_i|_\delta \subset \R^{n+1}$ to be
the restricted signed graph
$$
\xymatrix{
{}^n  \Gamma_i|_\delta = \{x_0 = \delta_0 h_i^2\}|_{{}^n Q^\delta_i} 
}
$$
with the graphical coorientation.

Thus ${}^n \Gamma_i|_\delta$ is cut out by the equations
$$
\xymatrix{
x_0 =  \delta_0 h_i^2 ,&
\delta_1 h_{i, 0} \leq 0, \ldots, \delta_i h_{i, i-1} \leq 0
}
$$
 Since ${}^n  \Gamma_i|_\delta$ is graphical over ${}^n  Q^\delta_i$,  it is also a submanifold with corners diffeomorphic to $\R_{\geq 0}^i \times \R^{n-i}$.  Likewise, its codimension one boundary faces are given by the vanishing of one of the functions $h_{i, j}$. 

Consider as well the union 
$$
\xymatrix{
{}^n  \Gamma|_\delta = \bigcup_{i = 0}^n {}^n \Gamma_i|_\delta 
}
$$



 \begin{remark}
 Note that
$$
\xymatrix{
 {}^n \Gamma_i = \bigcup_{\delta, \delta_0 = 1} {}^n \Gamma_i|_\delta &  {}^n \Gamma = \bigcup_{\delta,  \delta_0 = 1} {}^n \Gamma|_\delta
}
$$
since $x\in  {}^n \Gamma_i$ implies $x\in  {}^n \Gamma_i|_\delta$ where for $1\leq j \leq i$, we set
$\delta_j = -\sgn(h_{i, j}(x))$, when $h_{i, j}(x) \not =  0$, and choose it arbitrarily otherwise.
 \end{remark}

\begin{remark}\label{rem: last sign}
Note if we set $\delta' = (\delta_0, \ldots, \delta_{n-1}, -\delta_n)$, then the map $ \R^{n+1} \to \R^{n+1}$, $(x_0, \ldots, x_{n-1}, x_n) \mapsto (x_0, \ldots, x_{n-1}, -x_n)$, takes  ${}^n  \Gamma|_\delta$ isomorphically to ${}^n  \Gamma|_{\delta'}$ as a cooriented hypersurface. 
Thus we could always set $\delta_n = 1$ and not miss any new geometry.  
\end{remark}

Note ${}^n \Gamma_i \cap \{ x_0 <  0 \}$, hence also ${}^n \Gamma_i|_\delta \cap \{ \delta_0 x_0 <  0 \}$, 
is empty since ${}^n \Gamma_i$ is the graph of $h_i^2\geq 0$.

 \begin{lemma}\label{lem: induction for fronts}
 Fix $\delta = (\delta_0, \ldots, \delta_n)$,  and set $\delta' = (\delta_0\delta_1, \delta_2, \ldots, \delta_n)$. The homeomorphism 
 $$
 \xymatrix{
 s:\delta_0\R_{\geq 0} \times \R^n \ar[r] & \delta_0\R_{\geq 0} \times \R^n 
 }
 $$
 $$
 \xymatrix{
  s (x_0, x_1, x_2,  \ldots, x_n) =  (x_0, \delta_0\delta_1(x_1 +  \delta_1 \sqrt{\delta_0 x_0}),x_2, \ldots, x_n)
 }
 $$
 gives a  cooriented identification
 $$
 \xymatrix{
 s({}^n \Gamma_i|_\delta \cap \{ \delta_0 x_0 \geq 0\} ) =  \delta_0\R_{\geq 0} \times {}^{n-1} \Gamma_{i-1}|_{\delta'}  & 0 < i \leq n
 }
 $$
 \end{lemma}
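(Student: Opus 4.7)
The plan is to verify this by direct computation, checking three things in turn: (a) the set-theoretic identification, (b) the topological identification (that $s$ is a homeomorphism of the domain), and (c) the compatibility of coorientations.

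First I would unpack the defining relations of $ {}^n\Gamma_i|_\delta$ restricted to $\{\delta_0 x_0 \geq 0\}$. The equation $x_0 = \delta_0 h_i^2$ together with the inequality $\delta_1 h_{i,0} = \delta_1 h_i \leq 0$ forces
$$h_i = -\delta_1\, |h_i| = -\delta_1\sqrt{\delta_0 x_0}.$$
Using the recursion $h_i = x_1 - h_{i,1}^2$, this rearranges to $x_1 + \delta_1\sqrt{\delta_0 x_0} = h_{i,1}^2$. Applying $s$, the new $x_1$-coordinate is therefore $\delta_0\delta_1 h_{i,1}^2 = \delta'_0\, h_{i-1}(x_2,\dots,x_i)^2$, which is precisely the graph equation of ${}^{n-1}\Gamma_{i-1}|_{\delta'}$ viewed inside the second factor of $\delta_0\R_{\geq 0} \times \R^n$ (with the role of its ``$x_0$'' coordinate played by $x_1$). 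The remaining inequalities $\delta_j h_{i,j-1}(x_j,\dots,x_i) \leq 0$, for $j = 2,\dots, i$, involve only the coordinates $x_2,\dots,x_i$ which are fixed by $s$; since $h_{i,j-1} = h_{(i-1),(j-2)}$ after reindexing, and $\delta'_{j-1} = \delta_j$ for $j\geq 2$, these translate into exactly the inequalities $\delta'_k h_{(i-1),k-1} \leq 0$ for $k = 1,\dots, i-1$ defining ${}^{n-1}\Gamma_{i-1}|_{\delta'}$.

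Second, I would exhibit the inverse
$$s^{-1}(x_0, x_1, x_2, \dots, x_n) = (x_0,\; \delta_0\delta_1 x_1 - \delta_1\sqrt{\delta_0 x_0},\; x_2, \dots, x_n),$$
which is continuous on $\delta_0\R_{\geq 0}\times\R^n$ and smooth on the interior $\{\delta_0 x_0 > 0\}$, so $s$ is the required homeomorphism (a diffeomorphism away from the boundary). For the coorientation, I would compute $s_*$ applied to the outward graphical normal $\partial/\partial x_0$ at a source point with $\delta_0 x_0 > 0$:
$$s_*\!\left(\frac{\partial}{\partial x_0}\right) = \frac{\partial}{\partial x_0} + \frac{1}{2\sqrt{\delta_0 x_0}}\frac{\partial}{\partial x_1}.$$
Since $\partial/\partial x_0$ is tangent to the target hypersurface $\delta_0\R_{\geq 0}\times {}^{n-1}\Gamma_{i-1}|_{\delta'}$ (the $\delta_0\R_{\geq 0}$ factor slides freely along it), the graphical coorientation of the image is read off from the $\partial/\partial x_1$-component, which is strictly positive and agrees with the outward graphical coorientation on the target. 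By continuity the identification of cooriented sets extends to the boundary $\{x_0 = 0\}$, where both sides degenerate to $\{x_1 = h_{i,1}^2\}$ with matching coorientation.

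The step I expect to be the main bookkeeping obstacle—though not genuinely difficult—is tracking the sign and index shifts: that the truncation $(\delta_0,\delta_1,\delta_2,\dots) \rightsquigarrow (\delta_0\delta_1, \delta_2,\dots)$ is precisely what absorbs both the sign of $h_i$ forced by the constraint $\delta_1 h_i \leq 0$ and the sign on the new graph equation $x_1 = \delta_0\delta_1 h_{i,1}^2$. Writing out the matching of $h_{i,j}$ with $h_{(i-1),(j-1)}$ under the shift of coordinates $(x_2,\dots,x_n) \leftrightarrow (y_1,\dots,y_{n-1})$ once and for all will make the rest essentially automatic.
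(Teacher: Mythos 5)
Your proposal is correct and follows essentially the same route as the paper's own proof: rewrite $x_0=\delta_0 h_i^2$ together with $\delta_1 h_i\leq 0$ as $x_1+\delta_1\sqrt{\delta_0 x_0}=h_{i,1}^2$, observe that the remaining cutting functions $h_{i,1},\dots,h_{i,i-1}$ are untouched by $s$ and match the defining inequalities of ${}^{n-1}\Gamma_{i-1}|_{\delta'}$ under the index/sign shift, and check the coorientation via the positive $\partial_{x_1}$-component of $s_*\partial_{x_0}$. Your version merely adds some welcome explicit detail (the inverse of $s$ and the exact pushforward formula) that the paper leaves implicit.
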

 
\begin{proof}
Recall ${}^n \Gamma_i|_\delta$ is defined by
$$
\xymatrix{
x_0 =  \delta_0 h_i^2 &
\delta_1 h_{i, 0} \leq 0, \ldots, \delta_i h_{i, i-1} \leq 0
}
$$
in particular
$$
\xymatrix{
  x_0 = \delta_0 h_i^2 & \delta_1  h_{i, 0} = \delta_1  h_{i}  \leq 0
}
$$
Note the functions $h_{i, 1}, \ldots, h_{i, i-1}$ are independent of the coordinates $x_0, x_1$.

When $  \delta_0 x_0 \geq 0$ and $\delta_1 h_{i}  \leq 0$, the equation $   x_0 = \delta_0 h_i^2$  is equivalent to $\sqrt{   \delta_0 x_0} = -\delta_1 h_i$. Expanding this in terms of the definitions, we can rewrite this in the form
$$
\xymatrix{
x_1 + \delta_1 \sqrt{ \delta_0  x_0} = h_{i-1}(x_2, \ldots, x_i)^2
}
$$
Thus since $\delta_0' = \delta_0\delta_1$, we see $s$ takes ${}^n \Gamma_i|_\delta \cap \{ \delta_0  x_0 \geq 0\}$  into 
$\delta_0 \R_{ \geq 0} \times \{x_1 = \delta'_0  h_{i-1}^2\}$. 

Moreover,  the additional functions $h_{i, 1}, \ldots, h_{i, i-1}$ cutting out ${}^{n-1} \Gamma_{i-1}|_{\delta'} \subset   \{x_1 = \delta'_0  h_{i-1}^2\}$
pull back to the same functions $h_{i, 1}, \ldots, h_{i, i-1}$ cutting out ${}^n \Gamma_i|_\delta$.

Finally, the   coorientations of ${}^{n} \Gamma_{i}|_{\delta}$,${}^{n-1} \Gamma_{i-1}|_{\delta'}$ are positive on respectively $ \partial_{x_0}$,  $ \partial_{x_1}$.  
Observe the $\partial_{x_1}$-component of $s_* \partial_{x_0}$ is in the direction of $ \partial_{x_1}$,
and hence $s$  gives a  cooriented identification.
\end{proof}

\subsubsection{Alternative presentation}\label{sect:alt sign}

For compatibility with inductive arguments, it is useful to introduce an alternative sign convention and alternative presentation of the local models.
%

%
%
%

%
%
%
%

Fix signs $\eps= (\eps_0, \ldots, \eps_n)$.
Consider the involution $\sigma_\eps: \R^n \to \R^n$ defined by $\sigma_\eps(x_1, \ldots, x_n) = (\eps_1 x_1, \ldots, \eps_n x_n)$. 

Define the  domain quadrant $ {}^n  R^\eps_i  \subset \R^n$ cut out by the inequalities
$$
\xymatrix{
\eps_0 \eps_1  h_{i, 0}\circ \sigma_\eps \leq 0, \ldots,  \eps_{i-1} \eps_i  h_{i, i-1}\circ \sigma_\eps \leq 0
}
$$

Define the cooriented hypersurface  ${}^n  \Gamma_i^\eps \subset \R^{n+1}$ to be
the restricted signed graph
$$
\xymatrix{
{}^n  \Gamma_i^\eps = \{x_0 = \eps_0 h_i^2 \circ \sigma_\eps\}|_{{}^n R^{\eps}_i} 
}
$$
with the graphical coorientation.
Thus ${}^n  \Gamma_i^\eps$ is cut out by the equations
$$
\xymatrix{
x_0 =  \eps_0 h_i^2\circ \sigma_\eps &
\eps_0 \eps_1  h_{i, 0}\circ \sigma_\eps \leq 0, \ldots,  \eps_{i-1} \eps_i  h_{i, i-1}\circ \sigma_\eps \leq 0
}
$$
Consider as well the union
$$
\xymatrix{
 {}^n\Gamma^\eps = \bigcup_{i=0}^n   {}^n \Gamma^\eps_i
}
$$

\begin{remark}\label{rem: indep of last sign} 
A simple but important observation: ${}^n  \Gamma_i^\eps$  in fact only  depends on $\eps_0, \ldots, \eps_{i-1}$ and not $\eps_i$. This is because $h_{i, i-1} = x_i$ and so $\eps_{i-1} \eps_i  h_{i, i-1}\circ \sigma_\eps = \eps_{i-1}  x_i$. In particular, the union $ {}^n\Gamma^\eps$ is independent of $\eps_n$.
\end{remark}

We have the following adaption of Lemma~\ref{lem: induction for fronts}.

 \begin{lemma}\label{lem: induction for fronts 2}
 Fix $\eps = (\eps_0, \ldots, \eps_n)$,  and set $\eps' = (\eps_1, \ldots, \eps_n)$. The homeomorphism 
 $$
 \xymatrix{
 s:\eps_0\R_{\geq 0} \times \R^n \ar[r] & \eps_0\R_{\geq 0} \times \R^n 
 }
 $$
 $$
 \xymatrix{
  s (x_0, x_1, x_2,  \ldots, x_n) =  (x_0, x_1 +  \eps_0 \sqrt{\eps_0 x_0},x_2, \ldots, x_n)
 }
 $$
 gives a  cooriented identification
 $$
 \xymatrix{
 s({}^n \Gamma^\eps_i \cap \{ \eps_0 x_0 \geq 0\} ) =  \eps_0\R_{ \geq 0} \times {}^{n-1} \Gamma_{i-1}^{\eps'}  & 0 < i \leq n
 }
 $$
 \end{lemma}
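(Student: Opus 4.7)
The strategy is to mimic the proof of Lemma~\ref{lem: induction for fronts}, carefully tracking the signs under the new convention in which the involution $\sigma_\eps$ has been absorbed into the defining equations. The core of the argument is that on the half-space $\{\eps_0 x_0 \geq 0\}$, the inequality $\eps_0\eps_1 h_{i,0}\circ\sigma_\eps \leq 0$ lets one extract an unambiguous square root, turning the degree-two graph equation $x_0 = \eps_0 h_i^2\circ\sigma_\eps$ into a degree-one equation that exhibits $\Gamma_i^\eps$ as a product with a lower-dimensional signed front.

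First I will substitute the recursive identity $h_i = x_1 - h_{i-1}(x_2,\dots,x_i)^2$ into the equation $x_0 = \eps_0 h_i^2\circ\sigma_\eps$, obtaining
\[
x_0 = \eps_0\bigl(\eps_1 x_1 - h_{i-1}(\eps_2 x_2,\dots,\eps_i x_i)^2\bigr)^2.
\]
On the region where $\eps_0 x_0 \geq 0$, this is equivalent to $\sqrt{\eps_0 x_0} = |h_i \circ \sigma_\eps|$. The inequality $\eps_0\eps_1 h_{i,0}\circ\sigma_\eps = \eps_0\eps_1(h_i\circ\sigma_\eps) \leq 0$ fixes the sign: regardless of whether $\eps_0\eps_1 = +1$ or $-1$, one deduces $h_i\circ\sigma_\eps = -\eps_0\eps_1\sqrt{\eps_0 x_0}$. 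Rearranging gives
\[
\eps_1\bigl(x_1 + \eps_0 \sqrt{\eps_0 x_0}\bigr) = h_{i-1}(\eps_2 x_2,\dots,\eps_i x_i)^2,
\]
which, in the image coordinate $\tilde x_1 := x_1 + \eps_0\sqrt{\eps_0 x_0}$, reads $\tilde x_1 = \eps_1\, h_{i-1}^2\circ\sigma_{\eps'}$. This is exactly the defining graph equation of ${}^{n-1}\Gamma_{i-1}^{\eps'}$ (with $\eps' = (\eps_1,\dots,\eps_n)$), considered on the slice $x_0 = \mathrm{const}$.

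Next I will match the remaining inequalities. Using $h_{i,j} = h_{i-j}(x_{j+1},\dots,x_i)$, the relabeling $x'_k = x_{k+1}$ identifies $h_{i,k}\circ\sigma_\eps$ with $h_{i-1,k-1}\circ\sigma_{\eps'}$, and the sign weight $\eps_k\eps_{k+1}$ appearing in ${}^n\Gamma_i^\eps$ for $k = 1,\dots,i-1$ agrees with the weight $\eps'_{k-1}\eps'_k$ appearing in ${}^{n-1}\Gamma_{i-1}^{\eps'}$. Thus under $s$ the remaining cutting inequalities are carried bijectively onto the inequalities defining ${}^{n-1}\Gamma_{i-1}^{\eps'}$, and the range of $s$ is the full product $\eps_0 \R_{\geq 0} \times {}^{n-1}\Gamma_{i-1}^{\eps'}$.

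Finally, for coorientations: both hypersurfaces carry graphical coorientations, so the positive normal to ${}^n\Gamma_i^\eps$ is a positive multiple of $\partial_{x_0}$ and to ${}^{n-1}\Gamma_{i-1}^{\eps'}$ a positive multiple of $\partial_{x_1}$ (in the product $\eps_0\R_{\geq 0}\times \R^n$). Since $s_* \partial_{x_0} = \partial_{x_0} + \tfrac{1}{2\sqrt{\eps_0 x_0}}\partial_{x_1}$, the $\partial_{x_1}$-component is strictly positive on the interior, so the identification is cooriented, exactly as in Lemma~\ref{lem: induction for fronts}. The only genuinely delicate point—hence the main obstacle—is the sign analysis in the square-root step, where the two possible values of $\eps_0\eps_1$ must be handled uniformly; everything else is bookkeeping.
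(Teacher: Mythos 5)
Your proposal is correct and follows essentially the same route as the paper's proof: on $\{\eps_0 x_0\geq 0\}$ the first inequality $\eps_0\eps_1 h_{i}\circ\sigma_\eps\leq 0$ resolves the square root of the graph equation into $x_1+\eps_0\sqrt{\eps_0 x_0}=\eps_1 h_{i-1}^2\circ\sigma_{\eps'}$, the remaining cutting functions are independent of $x_0,x_1$ and match under reindexing, and the positive $\partial_{x_1}$-component of $s_*\partial_{x_0}$ gives the cooriented identification. Your extra bookkeeping (the uniform treatment of $\eps_0\eps_1=\pm1$ and the explicit index match $\eps_k\eps_{k+1}=\eps'_{k-1}\eps'_k$) is accurate and consistent with the paper.
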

 
\begin{proof}
Recall ${}^n \Gamma^\eps_i$ is defined by
$$
\xymatrix{
x_0 =  \eps_0 h_i^2\circ \sigma_\eps &
\eps_0 \eps_1  h_{i, 0}\circ \sigma_\eps \leq 0, \ldots,  \eps_{i-1} \eps_i  h_{i, i-1}\circ \sigma_\eps \leq 0
}
$$
in particular
$$
\xymatrix{
x_0 =  \eps_0 h_i^2 \circ \sigma_\eps  & \eps_0 \eps_1  h_{i, 0}\circ \sigma_\eps = \eps_0 \eps_1  h_{i}\circ \sigma_\eps \leq 0
}
$$
Note the functions $h_{i, 1}, \ldots, h_{i, i-1}$ are independent of the coordinates $x_0, x_1$.

When $  \eps_0 x_0 \geq 0$ and $\eps_0 \eps_1  h_{i}\circ \sigma_\eps \leq 0$, the equation $   x_0 = \eps_0 h_i^2\circ \sigma_\eps$  is equivalent to $\sqrt{   \eps_0 x_0} = -\eps_0\eps_1 h_i\circ\sigma_\eps$. Expanding this in terms of the definitions, we can rewrite this in the form
$$
\xymatrix{
x_1 + \eps_0 \sqrt{ \eps_0  x_0} = \eps_1 h_{i-1, 1}^2 \circ \sigma_{\eps'}
}
$$
Thus we see $s$ takes ${}^n \Gamma^\eps_i \cap \{ \eps_0  x_0 \geq 0\}$  into 
$\eps_0 \R_{ \geq 0} \times \{x_1 = \eps_1  h_{i-1, 1}^2 \circ \sigma_{\eps'}\}$. 

Moreover,  the additional functions $h_{i, 1}, \ldots, h_{i, i-1}$ cutting out $${}^{n-1} \Gamma_{i-1}^{\eps'} \subset  
\{x_1 = \eps_1  h_{i-1, 1}^2 \circ \sigma_{\eps'}\}$$
pull back to the same functions $h_{i, 1}, \ldots, h_{i, i-1}$ cutting out ${}^n \Gamma^\eps_i$.

Finally, the   coorientations of ${}^n \Gamma^\eps_i$,${}^{n-1} \Gamma_{i-1}^{\eps'} $ are positive on respectively $ \partial_{x_0}$,  $ \partial_{x_1}$.  
Observe the $\partial_{x_1}$-component of $s_* \partial_{x_0}$ is in the direction of $ \partial_{x_1}$,
and hence $s$  gives a  cooriented identification.
\end{proof}

Here is a useful corollary that ``explains" the geometric meaning of the signs $\eps$.

\begin{cor}
 Fix $\eps = (\eps_0, \ldots, \eps_n)$.
 
 For $i=0, \ldots, n-1$, we have $\eps_i = \pm 1$ if and only if ${}^n\Gamma_{i+1}$ is on the $\pm$-side of ${}^n \Gamma_i$ with respect to the graphical $dx_0$-coorientation.

Moreover, for $i=1, \ldots, n-1$, we have $\eps_i = \pm 1$ if and only if ${}^n\Gamma_{i+1} \cap {}^n \Gamma_0$ is on the $\pm$-side of ${}^n \Gamma_i\cap {}^n \Gamma_0$ with respect to the graphical $dx_1$-coorientation.

\end{cor}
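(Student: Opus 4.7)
The plan is to prove both parts of the corollary by a direct sign computation on the common domain of the two fronts being compared, resting on a telescoping identity for $h_{i+1}^2 - h_i^2$. For the first part, observe that ${}^n\Gamma^\eps_i$ and ${}^n\Gamma^\eps_{i+1}$ are both graphs in the $x_0$-direction cooriented by $dx_0$, so the side of ${}^n\Gamma^\eps_{i+1}$ relative to ${}^n\Gamma^\eps_i$ on the common domain ${}^n R^\eps_i \cap {}^n R^\eps_{i+1}$ is controlled by the sign of $\eps_0(h_{i+1}^2 - h_i^2)\circ\sigma_\eps$. The key algebraic step I would establish is the identity
\[
h_{i+1}^2 - h_i^2 = (-1)^i\, x_{i+1}^2 \prod_{k=0}^{i-1}\bigl(h_{i+1,k} + h_{i,k}\bigr),
\]
obtained by iterating the elementary relation $h_{i+1,j} - h_{i,j} = h_{i,j+1}^2 - h_{i+1,j+1}^2$, which is a direct consequence of $h_{l,j} = x_{j+1} - h_{l,j+1}^2$, with base case $h_{i+1,i} = x_{i+1}$ and $h_{i,i} = 0$.

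Once this identity is in hand, I would extract the signs of the individual factors from the defining inequalities of the common quadrant: for each $k = 0, \ldots, i-1$, both $h_{i,k}\circ\sigma_\eps$ and $h_{i+1,k}\circ\sigma_\eps$ are forced to have sign $-\eps_k\eps_{k+1}$, so their sum does as well; and the last inequality of ${}^n R^\eps_{i+1}$, applied to $h_{i+1,i}\circ\sigma_\eps = \eps_{i+1} x_{i+1}$, forces $x_{i+1}$ to have sign $-\eps_i$. Multiplying all signs and using the telescoping $\prod_{k=0}^{i-1}\eps_k\eps_{k+1} = \eps_0\eps_i$, the sign of $\eps_0(h_{i+1}^2 - h_i^2)\circ\sigma_\eps$ reduces to $\eps_i$ in the interior of the common domain, proving the first claim.

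For the second claim, I would reduce to the first. Since the map $s$ from Lemma~\ref{lem: induction for fronts 2} restricts to the identity on $\{x_0 = 0\}$, the intersection ${}^n\Gamma^\eps_j \cap {}^n\Gamma^\eps_0 = {}^n\Gamma^\eps_j \cap \{x_0 = 0\}$ is naturally identified with ${}^{n-1}\Gamma^{\eps'}_{j-1}$ embedded inside $\{x_0 = 0\} \cong \R^n$ with coordinates $(x_1, \ldots, x_n)$, where $x_1$ plays the role of the graph coordinate of the lower-dimensional model and the $dx_1$-coorientation matches its graphical coorientation. The second claim then follows by applying the first claim at level $n-1$ with parameter $\eps' = (\eps_1, \ldots, \eps_n)$ and index $k = i-1$, yielding $\eps'_{i-1} = \eps_i$ as the sign. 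The main technical obstacle I anticipate is the careful bookkeeping required to verify the telescoping identity and manage the signs; however these are elementary algebraic manipulations, and no further geometric input is needed beyond Lemma~\ref{lem: induction for fronts 2} for the reduction step.
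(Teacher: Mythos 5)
Your proof is correct, and the first half takes a genuinely different route from the paper. The paper disposes of the whole corollary in two lines: the case $i=0$ is read off from the definitions, and for $i>0$ both assertions are pushed down to level $n-1$ by induction using the cooriented identification $s$ of Lemma~\ref{lem: induction for fronts 2}. You instead prove the first assertion non-inductively, via the closed-form factorization $h_{i+1}^2 - h_i^2 = (-1)^i x_{i+1}^2 \prod_{k=0}^{i-1}(h_{i+1,k}+h_{i,k})$ (which I checked: it follows from $h_{i+1,j}-h_{i,j} = h_{i,j+1}^2 - h_{i+1,j+1}^2$ together with $h_{i+1,i}=x_{i+1}$, $h_{i,i}=0$) and then reading off the sign of each factor from the defining inequalities of the quadrants; the telescoping $\prod_{k=0}^{i-1}\eps_k\eps_{k+1}=\eps_0\eps_i$ combines with the $(-1)^i$ to give exactly $\eps_i$. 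Your second half is essentially one step of the paper's induction: restricting $s$ to $\{x_0=0\}$, where it is the identity, identifies ${}^n\Gamma^\eps_j\cap{}^n\Gamma^\eps_0$ with ${}^{n-1}\Gamma^{\eps'}_{j-1}$ compatibly with the $dx_1$-coorientation, reducing to the first assertion at level $n-1$. The trade-off is that the paper's argument is shorter given the lemma, while yours yields an explicit formula for the separation $h_{i+1}^2-h_i^2$ in the spirit of Lemma~\ref{lem:deriv}, and makes transparent that the sign is constant on the interior of the common quadrant. Two cosmetic points: the sign of $x_{i+1}$ extracted from the last inequality of ${}^nR^\eps_{i+1}$ is irrelevant since $x_{i+1}$ enters squared; and you correctly read the corollary's ${}^n\Gamma_{i}$ as the signed pieces ${}^n\Gamma^\eps_i$, which is clearly the intended meaning given the paper's own base case.
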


\begin{proof}
For $i= 0$, the first assertion is immediate from the definitions ${}^n \Gamma_0 = \{x_0 = 0\}$ and ${}^n \Gamma_1 = \{x_0 = \eps_0 (\eps_1 x_1)^2 = \eps_0 x_1^2, \eps_0 \eps_1 (\eps_1 x_1)  = \eps_0 x_1 \leq 0 \}$.

For $i>0$, both assertions follow by induction from Lemma~\ref{lem: induction for fronts 2}.
\end{proof}

Fix signs $\eps= (\eps_0, \ldots, \eps_{n-1})$.
For $i = 0$, let ${}^{n} L^\eps_0 =   \R^{n} \subset T^*\R^{n}$ denote the zero-section.
For $i = 1, \ldots, n$,  introduce the positive conormal bundles
$$
\xymatrix{
{}^{n} L^\eps_{i} = T^+_{ {}^{n-1}\Gamma^\eps_{i-1}} \R^{n} \subset T^* \R^{n}
}
$$
determined by the graphical coorientation, 
 and consider their union
$$
\xymatrix{
{}^{n} L^\eps = \bigcup_{i = 0}^n {}^n L^\eps_i
}
$$

Fix signs $\eps= (\eps_0, \ldots, \eps_{n})$.
For $i = 0, \ldots, n$, introduce the  Legendrian  
$$
\xymatrix{
{}^{n} \Lambda^\eps_i \subset J^1 \R^n
}
$$
  projecting diffeomorphically to the front ${}^n\Gamma^\eps_i \subset \R^{n+1}$, 
and consider their union
$$
\xymatrix{
{}^n \Lambda^\eps = \bigcup_{i =0}^n {}^{n} \Lambda^\eps_i
}
$$

We have the following compatibility of the above Lagrangians and Legendrians analogous to Lemma~\ref{lem: tilt}.

 
\begin{lemma}\label{lem: tilt signs}
Fix signs $\eps = (\eps_0, \ldots, \eps_n)$, and set $\eps' = (\eps_1, \ldots, \eps_n)$. The contactomorphism
$$
\xymatrix{
S_{\eps_0}: J^1 \R^n \ar[r] &  J^1 \R^n
}
$$
$$
\xymatrix{
S_{\eps_0}(x_0, x, p) = (x_0 - \eps_0 p_1^2/4, x_1 + \eps_0 p_1/2, x_2, \ldots, x_n,   p_1, \ldots,  p_n)
}
$$
takes the  Legendrian  ${}^{n} \Lambda^\eps_i $  isomorphically to
the Legendrian
$\{0\} \times {}^n L^{\eps'}_i$, and
thus the union ${}^{n} \Lambda^\eps$  isomorphically to the union
$\{0\} \times {}^n L^{\eps'}$.

\end{lemma}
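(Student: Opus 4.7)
The plan is to model the argument on the proof of Lemma~\ref{lem: tilt}, carrying the sign data through the involution $\sigma_\eps$, and to lift the cooriented front identification of Lemma~\ref{lem: induction for fronts 2} to the asserted Legendrian identification.

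\emph{Step 1: $S_{\eps_0}$ is a contactomorphism.} This is a direct check on the contact form: the pullbacks of $dx_0$ and $p_1\,dx_1$ combine as $(dx_0 - \tfrac{\eps_0}{2}p_1\,dp_1) + p_1(dx_1 + \tfrac{\eps_0}{2}\,dp_1) = dx_0 + p_1\,dx_1$, while the remaining summands $p_j\,dx_j$ for $j\geq 2$ are unaffected.

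\emph{Step 2: Compute the image.} The Legendrian ${}^n\Lambda^\eps_i$ is the lift of the graphical front ${}^n\Gamma^\eps_i = \{x_0 = \eps_0 h_i^2\circ\sigma_\eps\}$ cut down by the quadrant conditions, so on it $p_j = -\partial_{x_j}(\eps_0 h_i^2\circ\sigma_\eps)$. In particular
$$
p_1 = -2\eps_0\eps_1\,(h_i\circ\sigma_\eps), \qquad \eps_0\,p_1^2/4 = \eps_0(h_i\circ\sigma_\eps)^2 = x_0.
$$
Applying $S_{\eps_0}$ and expanding $h_i = x_1 - h_{i-1}^2$, together with the fact that $\sigma_\eps$ restricted to $(x_2,\dots,x_n)$ agrees with $\sigma_{\eps'}$, I get
$$
\hat x_0 = x_0 - \eps_0\,p_1^2/4 = 0, \qquad \hat x_1 = x_1 + \eps_0\,p_1/2 = \eps_1(h_{i-1}\circ\sigma_{\eps'})^2 = \eps'_0\,h_{i-1}^2\circ\sigma_{\eps'},
$$
which is exactly the graphical equation cutting out the front ${}^{n-1}\Gamma^{\eps'}_{i-1}\subset\R^n$ (where the ``$x_0$''-coordinate of that model is played by $x_1$ here). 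The remaining $x_j$ and all $p_j$ are untouched by $S_{\eps_0}$.

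\emph{Step 3: Match quadrants and conormal direction.} For $j\geq 2$, the quadrant inequalities $\eps_{j-1}\eps_j\,h_{i,j-1}\circ\sigma_\eps\leq 0$ depend only on $(x_2,\dots,x_n)$ and, via the elementary shifts $h_{i,j}=h_{i-1,j-1}$ and $\eps'_{j}=\eps_{j+1}$, translate verbatim into the quadrant inequalities cutting out ${}^{n-1}\Gamma^{\eps'}_{i-1}$. The first inequality $\eps_0\eps_1\,h_i\circ\sigma_\eps\leq 0$ is precisely the condition that makes $p_1\geq 0$ under the formula for $p_1$ above, so the image sits in the \emph{positive} conormal direction; equivalently, this is the inequality absorbed by the choice of square root branch in Lemma~\ref{lem: induction for fronts 2}. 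A straightforward verification then shows the remaining $p_j$ on the image are the prescribed positive conormal directions of ${}^{n-1}\Gamma^{\eps'}_{i-1}$. Thus the image is exactly $\{0\}\times {}^n L^{\eps'}_i$. The case $i=0$ is immediate since ${}^n\Lambda^\eps_0 = \{x_0=0,\,p=0\}$ is fixed. Taking the union over $i$ gives the second assertion.

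\emph{Main obstacle.} The only real hurdle is keeping the bookkeeping straight: the index shift $\eps'_j=\eps_{j+1}$, the relation $h_{i,j}=h_{i-1,j-1}$, and the renaming of the ``$x_0$''-coordinate between the two ambient spaces. Once these are set up, the Legendrian statement reduces to the front-level statement of Lemma~\ref{lem: induction for fronts 2} plus a single sign check on $p_1$ against the leading quadrant inequality.
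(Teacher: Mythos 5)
Your proposal is correct and follows essentially the same route as the paper: the paper's proof simply reruns the computation of Lemma~\ref{lem: tilt} and adds exactly your two observations, namely that the leading quadrant inequality $\eps_0\eps_1\,h_{i,0}\circ\sigma_\eps\leq 0$ forces $p_1=-2\eps_0\eps_1\,h_{i,0}\circ\sigma_\eps\geq 0$ (hence the \emph{positive} conormal), and that the remaining cut-out functions $h_{i,1},\ldots,h_{i,i-1}$ are independent of $x_0,x_1$ and so carry over unchanged. Your explicit index bookkeeping ($\eps'_j=\eps_{j+1}$, the relabeling of $x_1$ as the new graph coordinate) is consistent with the definitions, so no gap.
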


\begin{proof} 
The proof is the same as that of Lemma~\ref{lem: tilt} with the following observations.
Consider the additional equations
$$
\xymatrix{
\eps_0 \eps_1 \delta_1 h_{i, 0}\circ \sigma_\delta \leq 0, \ldots, \eps_{i-1}  \eps_i h_{i, i-1}\circ \sigma_\eps \leq 0
}
$$
First, over $\eps_0 \eps_1 h_{i, 0} \circ \sigma_\eps\leq 0$, when $p_1 = -2 \eps_0\eps_1  h_{i, 0} \circ \sigma_\eps$, we then have 
$p_1 = -2\eps_0 \eps_1 h_{i, 0}  \circ \sigma_\eps \geq 0$, so we obtain the positive conormal direction.  
Second, the remaining functions $h_{i, 1}, \ldots, h_{i, i-1}$ 
are independent of $x_0, x_1$.
Thus $S_{\eps_0}$ indeed takes ${}^{n}\Lambda^\eps_i$ to $\{0\} \times {}^{n}L^\eps_i$.
\end{proof}

\begin{remark}
By the lemma, we see  the  Legendrian  ${}^{n} \Lambda^\eps_i \subset J^1 \R^n$ is independent of the initial sign $\eps_0$ so only depends on $\eps' = (\eps_1, \ldots, \eps_n)$.
\end{remark}

It is also useful to record the following relationship of $ {}^n\Gamma^\eps $ with the extended model 
$ {}^n\Gamma$. 

%
%


\begin{lemma} \label{lem: ext comp}
Fix signs $\eps= (\eps_0, \ldots, \eps_n)$.

Given a contactomorphism $J^1 \R^n\to J^1\R^n$ restricting to a closed embedding ${}^n \Lambda^{\eps} \subset \eps_0\cdot {}^n \Lambda$ with
${}^n \Lambda_i^{\eps} \subset \eps_0\cdot {}^n \Lambda_i$, for all $i$, consider the front $\Upsilon = \pi( {}^n \Lambda^{\eps}) \subset \eps_0 \cdot {}^n \Gamma$. 

Then either the involution $\sigma_\eps$ or its composition with $x_n \mapsto \pm x_n$ takes $\Upsilon$ to ${}^n \Gamma^\eps$.

\end{lemma}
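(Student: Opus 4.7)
The plan is to proceed by induction on $n$. The base case $n = 0$ is immediate since both $\Upsilon$ and ${}^0\Gamma^\eps$ reduce to the single point $\{0\}$. For the inductive step, I decompose the front into branches $\Upsilon = \bigcup_{i=0}^n \Upsilon_i$ where $\Upsilon_i := \pi({}^n\Lambda_i^\eps) \subset \eps_0 \cdot {}^n\Gamma_i$. The zero-section branch is automatic: both ${}^n\Lambda_0^\eps$ and $\eps_0 \cdot {}^n\Lambda_0$ have front $\{x_0 = 0\}$, so $\Upsilon_0 = \{x_0 = 0\} = {}^n\Gamma_0^\eps$, and this branch is already fixed by $\sigma_\eps$ and any $x_n$-reflection.

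For each $i \geq 1$, the front $\Upsilon_i$ lies in the smooth graph $\eps_0 \cdot {}^n\Gamma_i = \{x_0 = \eps_0 h_i^2\}$. Because the front projection restricts to a bijection on each individual Legendrian branch, $\Upsilon_i$ is homeomorphic to ${}^n\Gamma_i^\eps$, and in particular is a closed, connected, top-dimensional subset of the smooth $n$-manifold $\eps_0 \cdot {}^n\Gamma_i$, homeomorphic to $\R_{\geq 0}^i \times \R^{n-i}$. Consequently $\Upsilon_i$ is a single closed chamber of the stratification of $\eps_0 \cdot {}^n\Gamma_i$ cut out by its intersections with the other branches $\eps_0 \cdot {}^n\Gamma_j$, $j \neq i$. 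The remaining task is to identify precisely which chamber.

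For this I use Lemma~\ref{lem: induction for fronts 2}, whose homeomorphism $s$ identifies ${}^n\Gamma_i^\eps$ (which sits in $\{\eps_0 x_0 \geq 0\}$ because $\eps_0 x_0 = h_i^2$) with $\eps_0 \R_{\geq 0} \times {}^{n-1}\Gamma_{i-1}^{\eps'}$ for $\eps' = (\eps_1, \ldots, \eps_n)$. The same transformation exhibits $\eps_0 \cdot {}^n\Gamma$ (in the open half-space $\eps_0 x_0 > 0$) as a product with $\eps_0 \cdot {}^{n-1}\Gamma$, and slicing the embedding at a fixed positive value of $\eps_0 x_0$ yields an analogous lower-dimensional configuration: a branch-preserving embedding ${}^{n-1}\Lambda^{\eps'} \hookrightarrow \eps_0 \cdot {}^{n-1}\Lambda$. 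The inductive hypothesis then identifies the lower-dimensional front with $\sigma_{\eps'}({}^{n-1}\Gamma^{\eps'})$ up to an $x_n$-reflection, and pulling back through $s$ together with the remaining coordinate change $x_1 \mapsto \eps_1 x_1$ recovers the desired identity $\Upsilon = \sigma_\eps({}^n\Gamma^\eps)$ modulo the $x_n \mapsto \pm x_n$ ambiguity.

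The main obstacle is that $s$ fails to be smooth along $\{x_0 = 0\}$, so some care is needed to extract a genuine lower-dimensional contactomorphism from the sliced embedding. This is handled by working in the open region $\eps_0 x_0 > 0$ where $s$ is a diffeomorphism, and then extending the induced identification by continuity to the boundary using the closed-embedding property of the original contactomorphism. The residual $x_n$-reflection ambiguity in the conclusion is not an artifact of the argument but is essential: by Remark~\ref{rem: indep of last sign}, both ${}^n\Gamma^\eps$ and ${}^n\Lambda^\eps$ are independent of the final sign $\eps_n$, so the orientation of the last coordinate cannot be detected from the embedding data.
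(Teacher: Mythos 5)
Your overall strategy (induction on $n$, reducing each branch to a lower-dimensional configuration) is in the same spirit as the paper's proof, but two of the steps you lean on do not hold up.

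First, the claim that $\Upsilon_i$ is ``a single closed chamber of the stratification of $\eps_0\cdot{}^n\Gamma_i$ cut out by its intersections with the other branches'' is false already for the standard model. Take $n=2$: the intersection ${}^2\Gamma_2\cap{}^2\Gamma_1$ projects to $\{x_2=0\}\cup\{x_1=x_2^2/2\}$, and the second (transverse) component passes through the interior of the quadrant of ${}^2\Gamma_2$ lying over $\{x_1\leq x_2^2,\ x_2\leq 0\}$. So the quadrants are unions of several chambers of that stratification, not single chambers. Even if you restrict to the stratification by the primary tangency loci $\{h_{i,j}=0\}$ (whose chambers \emph{are} the quadrants), nothing in the hypotheses forces the interior of $\Upsilon_i$ to avoid them: an interior point of ${}^n\Lambda_i^\eps$ must avoid the images of the other ${}^n\Lambda_j^\eps$, but it can perfectly well land on $\eps_0\cdot{}^n\Lambda_j$ itself. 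What actually pins down the quadrant is coorientation data together with the trace of the configuration on $\{x_0=0\}$.

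Second, the inductive reduction by slicing at a positive level of $\eps_0 x_0$ does not deliver the hypotheses of the lemma in dimension $n-1$. For $c>0$ the slice $\eps_0\cdot{}^n\Gamma_i\cap\{\eps_0x_0=c\}$ has \emph{two} sheets (corresponding to $h_i=\pm\sqrt c$), and deciding which sheet $\Upsilon_i$ occupies is precisely the content of the sign $\eps_1$, so it cannot be read off from the slice; moreover the lemma for $\eps'=(\eps_1,\dots,\eps_n)$ requires a closed embedding into $\eps_1\cdot{}^{n-1}\Lambda$ (not $\eps_0\cdot{}^{n-1}\Lambda$) induced by a contactomorphism of $J^1\R^{n-1}$, and ``extending by continuity'' a sliced map does not produce such a contactomorphism. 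The paper's proof avoids both problems: it intersects with the zero-section branch, so that $\Upsilon'=\pi(({}^n\Lambda^\eps\setminus{}^n\Lambda_0)\cap{}^n\Lambda_0)$ is a single copy of the $(n-1)$-dimensional configuration inside ${}^n\Gamma_0=\{x_0=0\}$ (via the identity ${}^n\Gamma_i\cap{}^n\Gamma_0={}^{n-1}\Gamma_{i-1}$); induction normalizes $\Upsilon'$ to ${}^{n-1}\Gamma^{\eps'}$; and one then observes that ${}^n\Gamma^\eps$ is the \emph{unique} extension of ${}^{n-1}\Gamma^{\eps'}$ within $\sigma_\eps(\eps_0\cdot{}^n\Gamma)$ compatible with coorientations. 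You should restructure your induction around the zero level set rather than a positive one, and replace the chamber argument with this uniqueness-of-cooriented-extension step. (Your closing remark that the $x_n\mapsto\pm x_n$ ambiguity is forced by Remark~\ref{rem: indep of last sign} is correct.)
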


\begin{proof}
Note we have ${}^n \Lambda^\eps_0 = \eps_0 \cdot {}^n \Lambda_0 = {}^n \Lambda_0$. Consider the intersection $\Upsilon' = \pi(({}^n \Lambda^\eps \setminus  {}^n \Lambda_0) \cap
{}^n \Lambda_0)$
as a front inside of $\pi({}^n \Lambda_0) = {}^n \Gamma_0 = \{ x_0 = 0\}$. By induction, 
either the involution $\sigma_\eps$ or its composition with $x_n \mapsto \pm x_n$ takes $\Upsilon'$ to ${}^{n-1} \Gamma^{\eps'}$ where $\eps' = (\eps_1, \ldots, \eps_n)$. So we may assume  $\Upsilon' = {}^{n-1}  \Gamma^{\eps'}$. Now observe ${}^n\Gamma^\eps$ is the unique way to extend ${}^{n-1} \Gamma^{\eps'}$ within $\sigma_\eps(\eps_0 \cdot {}^n \Gamma)$ compatible with coorientations. 
\end{proof}

We also have the following observation about signs. See Section \ref{ss:arb lags and legs} for notation.

\begin{lemma} 

Let $\nu_0$ be the vertical polarization of $T^*\R^n \to \R^n$.

Then we have $\eps(\nu_0,{}^n L^\eps_1,  {}^n L^\eps_2) = \eps_0$.
\end{lemma}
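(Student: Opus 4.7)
The plan is to unwind the definition of the triple sign $\eps(L, \nu, L')$ introduced in Section \ref{ss:arb lags and legs} and read the answer off the defining quadratic equation of ${}^{n-1}\Gamma^\eps_1$.

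First, I spell out the two Lagrangian germs near the origin. By definition, ${}^n L^\eps_1$ is the positive conormal in $T^*\R^n$ to ${}^{n-1}\Gamma^\eps_0 = \{x_0 = 0\}$ equipped with its $+\partial_{x_0}$ coorientation, so ${}^n L^\eps_1 = \{x_0 = 0,\, p_0 \geq 0,\, p_i = 0 \text{ for } i \geq 1\}$. Similarly, ${}^n L^\eps_2$ is the positive conormal to ${}^{n-1}\Gamma^\eps_1 = \{x_0 = \eps_0 x_1^2,\, \eps_0 x_1 \leq 0\}$ cooriented by $d(x_0 - \eps_0 x_1^2)$; explicitly, with a fiber parameter $t \geq 0$, it equals $\{(\eps_0 x_1^2, x_1, x_2, \ldots; t, -2\eps_0 t x_1, 0, \ldots) : \eps_0 x_1 \leq 0\}$. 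At the origin the two Lagrangians share the common tangent plane $T_0 = \mathrm{span}(\partial_{x_1}, \ldots, \partial_{x_{n-1}}, \partial_{p_0})$, and $\nu_0 \cap T_0 = \mathrm{span}(\partial_{p_0})$ is a common isotropic line.

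Second, I apply the definition of $\eps(L, \nu_0, L')$ from Section \ref{ss:arb lags and legs}: when $L, L'$ are positive conormals of cooriented fronts $\phi, \phi' \subset \R^n$ tangent at a point with compatible coorientations, this sign records the side of the cooriented front $\phi$ — measured in the base $\R^n$ along the vertical polarization $\nu_0$ — on which $\phi'$ sits to leading order. Since ${}^{n-1}\Gamma^\eps_1$ lies on the $\eps_0$-side of $\{x_0 = 0\}$ and both fronts share the $+\partial_{x_0}$ coorientation, we conclude $\eps({}^n L^\eps_1, \nu_0, {}^n L^\eps_2) = \eps_0$.

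The only nontrivial step is bookkeeping: I must verify that the definition of $\eps$ from Section \ref{ss:arb lags and legs} really reduces to the side-of-front description above, with no extra sign introduced by the positive-conormal convention nor by the base-quadrant restriction $\eps_0 x_1 \leq 0$ that defines ${}^{n-1}\Gamma^\eps_1$. Once this matching is confirmed, the lemma follows immediately by inspection of the quadratic term $\eps_0 x_1^2$ in the defining equation of ${}^{n-1}\Gamma^\eps_1$.
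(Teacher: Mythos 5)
Your proposal is correct and follows essentially the same route as the paper: the paper's entire proof is to recall that ${}^n L^\eps_1$, ${}^n L^\eps_2$ are the positive conormals of $\{x_0=0\}$ and $\{x_0=\eps_0 x_1^2\}$ and to observe that $\eps_0 x_1^2$ is an $\eps_0$-definite quadratic form in $x_1$, which is exactly your ``which side of the cooriented front'' argument. The convention-matching step you flag at the end (that the reduction $[\,\cdot\,]^{C}$ and the $\prec$-ordering of Section~\ref{ss:arb lags and legs} indeed compute the side of the front relative to $\nu_0$) is left equally implicit in the paper, so you have not omitted anything the authors supply.
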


\begin{proof}
Recall ${}^n L^\eps_1$ is the positive conormal to the graph ${}^{n-1} \Gamma^\eps_0 = \{x_0 = 0\}$,
and ${}^n L^\eps_1$ is the positive conormal to the graph ${}^{n-1} \Gamma^\eps_1 =\{ x_0 = \epsilon_0 x_1^2\}$.
Since $ \eps_0 x_1^2$ is an $\eps_0$-definite quadratic form in $x_1$, the assertion follows.
\end{proof}

\subsection{Arboreal models}

We now present the local models for arboreal singularities.

\subsubsection{Signed rooted trees}

\begin{definition}  We will use the following terminology throughout:
\begin{enumerate}
\item

A {\em   tree} $T$ is a nonempty, finite, connected acyclic graph.

\item A {\em rooted tree} $\sT = (T, \rho)$ is a pair of a tree $T$ and a distinguished vertex $\rho$ called the {\em root}.

\item
A {\em  signed rooted tree} $\cT = (T, \rho, \eps)$ is a rooted tree $(T, \rho)$
and a decoration $\eps$ of a sign $\pm 1 $ on each edge of $T$ not adjacent to the root $\rho$.

\end{enumerate}
\end{definition}

    \begin{figure}[h]
\includegraphics[scale=0.6]{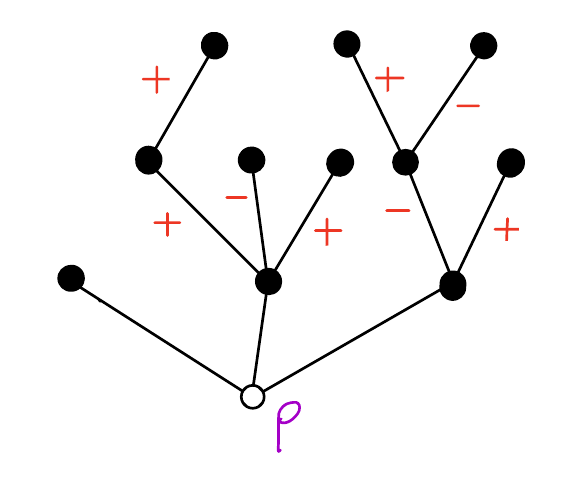}
\caption{A signed rooted tree.}
\label{fig:signedrootedtree}
\end{figure}

Given a signed rooted tree $\cT = (T, \rho, \eps)$, 
we write $v(T)$ for the  set of vertices, $e(T)$ for the   set of edges, 
and $n(\cT) = v(T) \setminus \rho$ for the set of non-root vertices.
We regard $v(T)$ as  a poset with  unique minimum $\rho$, and in general
 $\alpha \leq  \beta\in v(T)$ when 
 the shortest path connecting $\beta$ and $\rho$ contains $\alpha$. 
We call a non-root vertex  $\beta$ a  leaf if exactly one edge of $T$ is adjacent to $\beta$, and write
$\ell(\cT) \subset v(T)$ for the set of leaf vertices.

%
%
 \begin{remark}
 Throughout what follows, for a finite set $S$, we write $\R^S$ for the Euclidean space of $S$-tuples of real numbers.
 One may always fix a bijection $S \simeq \{1,2, \ldots, n\}$, for some $n\geq 0$, and hence an isomorphism $\R^S \simeq \R^n$, but it will be convenient to avoid  choosing such identifications when awkward.
 We will most often consider $S = n(\cT)$  the non-root vertices  for some rooted tree $\cT = (T, \rho)$. Here if one prefers to 
 fix a bijection $b:n(\cT) \risom \{1,2, \ldots, |n(\cT)|\}$, we recommend choosing $b$ to be order-preserving: if $\alpha\leq \beta$, then one should ensure $b(\alpha) \leq b(\beta)$. This will allow for a clear translation of our constructions.
 \end{remark}
%
%
%
%
%
%
\begin{definition} A signed rooted tree $\cT=(T,\rho,\eps)$ is called {\em positive} if  the decoration $\eps$ consists of signs $+1$.
\end{definition}

We will associate  to any signed rooted tree $\cT = (T, \rho, \eps)$, a multi-cooriented hypersurface, conic Lagrangian,
and Legendrian 
$$
\xymatrix{
H_\cT \subset \R^{n(\cT)} & L_\cT \subset T^*\R^{n(\cT)} & \Lambda_\cT \subset J^1\R^{n(\cT)} 
}
$$  
where as usual we write $n(\cT) = v(T) \setminus \rho$ for the set of non-root vertices.

By definition, the latter two will be determined by the first as follows:

\begin{enumerate}

\item $L_\cT$ is the union of the zero-section $\R^{n(\cT)}$ and the positive conormal to $H_\cT$.

\item $\Lambda_{\cT}$  is the Legendrian lift of $L_\cT$ with zero primitive.

\end{enumerate}

\subsubsection{Type $A$ trees}
Let us first consider the distinguished case of $A_{n+1}$-trees with extremal root.

\begin{definition}

For $n\geq 0$, 
a {\em linear signed $A_{n+1}$-rooted tree} is a signed rooted tree $\cA_{n+1} =(A_{n+1}, \rho, a)$ with vertices $v(A_{n+1}) =\{0, 1, \ldots, n\}$, edges $v(A_{n+1}) = \{ [i, i+1] \,|\, i = 0, \ldots, n-1\}$, and root $\rho = 0$.
\end{definition}

By definition, the sign $a$ is a length $n-1$ list of signs $(a_{[1, 2]}, \ldots, a_{[n-1, n]})$.
Let us set $\eps = (\eps_0, \ldots, \eps_{n-1}) =  (a_{[1, 2]}, \ldots, a_{[n-1, n]}, 1)$ to be the length $n$ list of signs where we pad $a$ by adding a single $1$ at the end.

%

\begin{definition}\label{def: type A} The models for $A_n$-type arboreal singularities are given as follows:

\begin{enumerate}

\item The arboreal $\cA_{1}$-front is the empty set $H_{\cA_1}  = \emptyset$ inside the point~$\R^0$. 

For $n\geq 1$, the arboreal $\cA_{n+1}$-front is the cooriented hypersurface 
$$
\xymatrix{
H_{\cA_{n+1}}  = {}^{n-1} \Gamma^\eps  \subset \R^n
}
$$
introduced in Section~\ref{sect:alt sign}.

\item 

For $n\geq 0$, the arboreal $\cA_{n+1}$-Lagrangian is the union of the zero-section and positive conormal
$$
\xymatrix{
L_{\cA_{n+1}} = \R^n \cup  T^+_{  \R^n} H_{\cA_{n+1}} \subset T^*\R^{n}
}
$$

\item 

For $n\geq 0$, the arboreal $\cA_{n+1}$-Legendrian  is the lift 
$$
\xymatrix{
\Lambda_{\cA_{n+1}} = \{0\} \times L_{\cA_{n+1}}  \subset J^1 \R^n 
}
$$
 \end{enumerate}

\end{definition}

    \begin{figure}[h]
\includegraphics[scale=0.6]{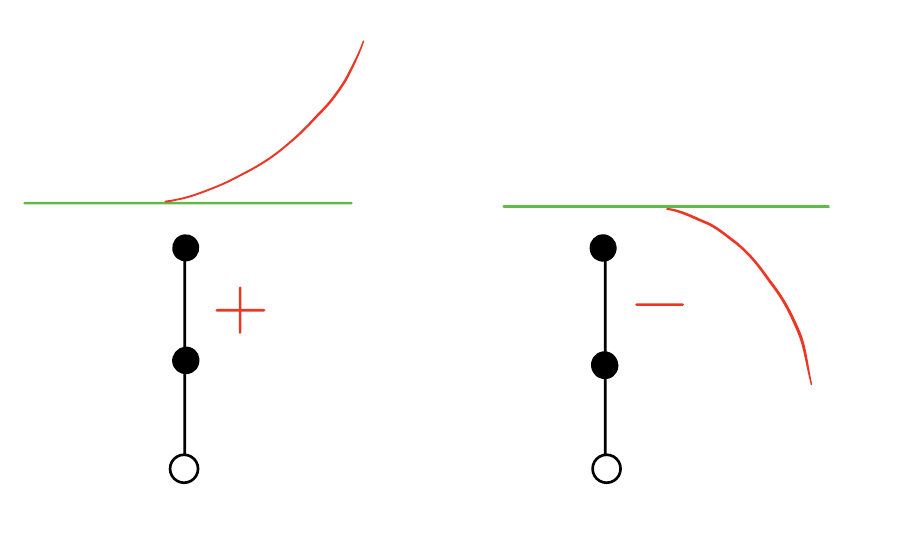}
\caption{The two $A_3$ fronts with positive and negative sign.}
\label{fig:quadrants}
\end{figure}

    \begin{figure}[h]
\includegraphics[scale=0.5]{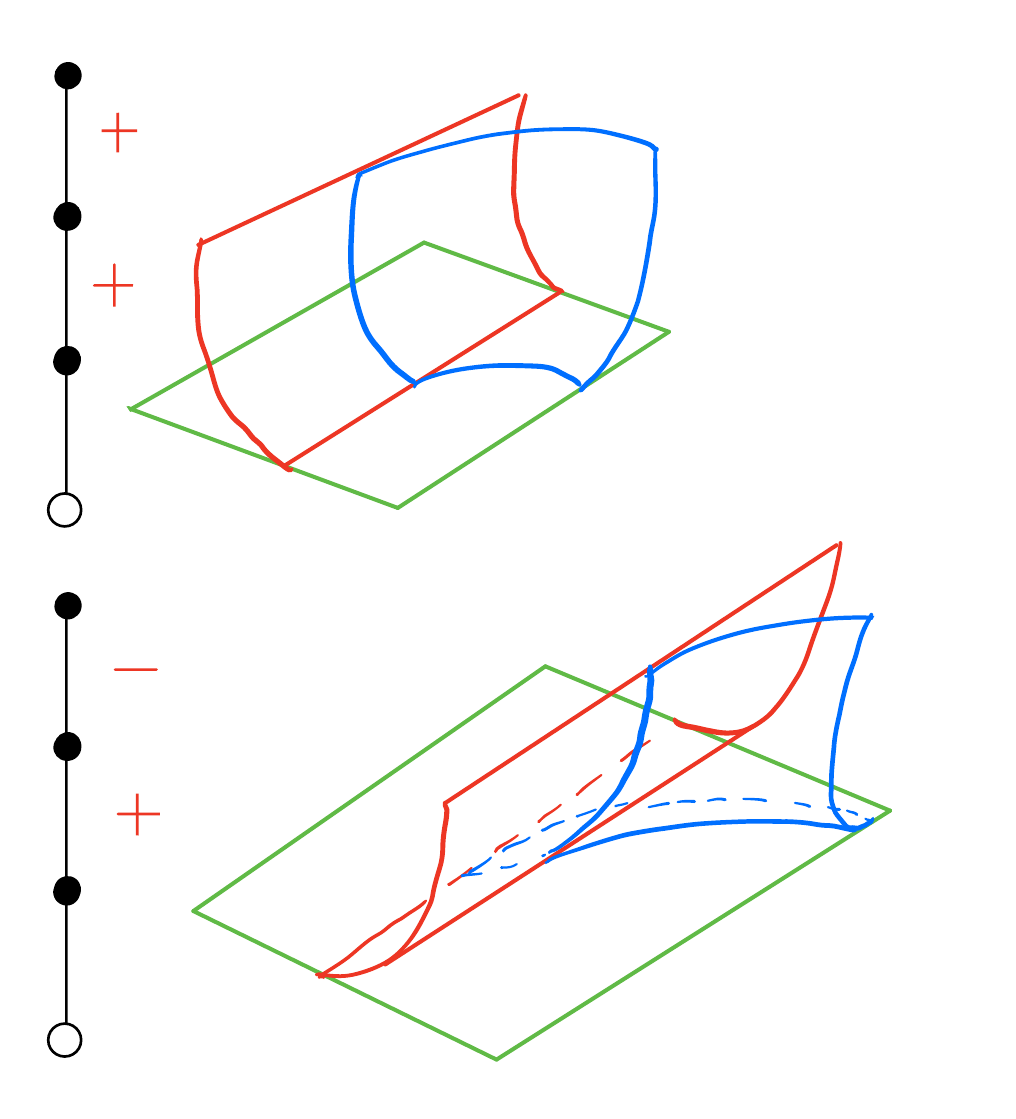}
\caption{Two $A_4$ fronts with different choices of signs. The other two fronts can be obtained from these two by reflections.}
\label{fig:posvneg}
\end{figure}

\begin{remark}
Following Remark~\ref{rem: indep of last sign}, the arbitrary choice of the last sign $\eps_{n-1} = 1$ does not affect the  arboreal $\cA_{n+1}$-models. 
\end{remark}

Recall the linear signed $A_{n+1}$-rooted tree $\cA_{n+1} = (A_{n+1}, \rho, a)$ has vertices $v(A_{n+1}) =\{0, 1, \ldots, n\}$ with root $\rho = 0$, and 
so the non-root vertices form the set  $n(\cA_{n+1}) =\{1, \ldots, n\}$.
In the above definition, we should 
more invariantly  view the ambient Euclidean space $ \R^{n}$ in the form $\R^{n(\cA_{n+1})}$ where the ordering of the coordinates matches that of $n(\cA_{n+1})$. 

With this viewpoint, we rename the smooth pieces  of the $\cA_{n+1}$-front, indexing them by non-root vertices
$$
\xymatrix{
H_{i} = {}^{n-1} P^\eps_{i-1} \subset H_{\cA_{n+1}} & i \in n(\cA_{n+1}) =\{1, \ldots, n\}
}
$$
Likewise, we rename the smooth pieces of the of the $\cA_{n+1}$-Lagrangian, indexing them by   vertices
$$
\xymatrix{
L_{0} = \R^n \subset L_{\cA_{n+1}} 
}
$$
$$
\xymatrix{
L_{i} = T^+_{  \R^n} H_{i}  \subset L_{\cA_{n+1}} & i \in n(\cA_{n+1}) =\{1, \ldots, n\}
}
$$ 
and similarly, we rename
the smooth pieces of the of the $\cA_{n+1}$-Legendrian, indexing them by  vertices
$$
\xymatrix{
\Lambda_{i}  = \{0\} \times  L_{\cA_{n+1}, i} \subset \Lambda_{\cA_{n+1}} &  i  \in v(\cA_{n+1}) =\{0, 1, \ldots, n\} 
}
$$ 

\begin{lemma}\label{lem: leaf pruning}
For $n \geq 1$, and $n \in v(A_{n+1}) = \{0, 1, \ldots, n\}$ the unique leaf vertex, and $\mathring H_{n}  \subset H_{\cA_{n+1}} $ the interior of the corresponding smooth piece, we have
$$
\xymatrix{
H_{\cA_{n+1}} \setminus \mathring H_{n} = H_{\cA_n} \times \R
}
$$
inside of $\R^{n(\cA_{n+1}) } = \R^{n(\cA_n)} \times \R$.
\end{lemma}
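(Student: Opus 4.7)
My plan is twofold: first, realize $H_{\cA_{n+1}}\setminus\mathring H_n$ as the union of the smooth pieces $H_i$ with $i<n$; second, observe that each of those pieces is invariant in the coordinate $x_{n-1}$ attached to the leaf vertex, so that the union is a product $H_{\cA_n}\times\R$ after one application of Remark~\ref{rem: indep of last sign}. The subtle step is the disjointness assertion $\mathring H_n \cap H_i = \emptyset$; once that is in place the remaining work is formal.

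For the first step I need both $\mathring H_n \cap H_i = \emptyset$ and $\partial H_n \subset \bigcup_{i<n}H_i$. If a point lies in both $\mathring H_n$ and $H_i$, then $h_{n-1}^2 = h_{i-1}^2$ after applying $\sigma_\eps$, so $h_{n-1}=\pm h_{i-1}$. The sign $+$ propagates down the recursion $h_k = x_1 - h_{k-1}^2$ and forces $h_{n-1, i-1}\circ\sigma_\eps = 0$, contradicting the strict constraint $\eps_{i-1}\eps_i h_{n-1, i-1}\circ\sigma_\eps < 0$ defining $\mathring H_n$. The sign $-$ forces $h_{n-1}$ and $h_{i-1}$ to have opposite signs, incompatible with the constraints $\eps_0\eps_1 h_{n-1}\circ\sigma_\eps < 0$ on $\mathring H_n$ and $\eps_0\eps_1 h_{i-1}\circ\sigma_\eps \leq 0$ on $H_i$. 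For the boundary inclusion, a short inductive substitution shows that on $\{h_{n-1, j}\circ\sigma_\eps = 0\}$ the function $h_{n-1}\circ\sigma_\eps$ collapses to $h_j\circ\sigma_\eps$, identifying the face with a subset of $H_{j+1}={}^{n-1}\Gamma^\eps_j$.

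For the second step, for each $j\leq n-2$ the defining equation $x_0 = \eps_0 h_j^2\circ\sigma_\eps$ and the inequalities cutting out ${}^{n-1}\Gamma_j^\eps$ involve only the coordinates $x_0, \ldots, x_j$. So each such piece is translation-invariant in $x_{n-1}$ and splits as ${}^{n-2}\Gamma_j^{\bar\eps}\times\R$ inside $\R^n = \R^{n-1}\times\R$, where $\bar\eps = (\eps_0, \ldots, \eps_{n-2})$. Combined with the first step this gives $H_{\cA_{n+1}}\setminus\mathring H_n = {}^{n-2}\Gamma^{\bar\eps}\times\R$. Finally, $\bar\eps$ and the length-$(n-1)$ sign tuple $\tilde\eps = (a_{[1, 2]}, \ldots, a_{[n-2, n-1]}, 1)$ associated to $\cA_n$ differ at most in their last entry, so Remark~\ref{rem: indep of last sign} yields ${}^{n-2}\Gamma^{\bar\eps} = {}^{n-2}\Gamma^{\tilde\eps} = H_{\cA_n}$, completing the identification.
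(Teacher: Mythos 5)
Your proof is correct in outline and is in fact considerably more detailed than the paper's, which consists of the single observation that the remaining pieces $H_i = {}^{n-1}\Gamma^\eps_{i-1}$, $i<n$, are independent of the last coordinate --- i.e., exactly your second step. The set-theoretic decomposition you establish in your first step ($\mathring H_n\cap H_i=\emptyset$ and $\partial H_n\subset\bigcup_{i<n}H_i$) is left implicit in the paper at this point; it is proved in general only later, as Corollary~\ref{cor: embed}, which identifies ${}^n\Gamma^\eps_i\cap{}^n\Gamma^\eps_j$ with the closed boundary face of ${}^n\Gamma^\eps_i$ cut out by $h_{i,j}=0$. So you are supplying content the paper takes for granted, and your boundary-inclusion argument (collapsing $h_{n-1}\circ\sigma_\eps$ to $h_j\circ\sigma_\eps$ on $\{h_{n-1,j}\circ\sigma_\eps=0\}$) is fine, including the point that the inequalities cutting out $H_{j+1}$ are inherited from those cutting out $H_n$.

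The one step that does not hold as written is the disjointness argument. From $h_{n-1}^2\circ\sigma_\eps=h_{i-1}^2\circ\sigma_\eps$ you get $h_{n-1}\circ\sigma_\eps=\pm\, h_{i-1}\circ\sigma_\eps$, but the ``$+$'' branch does not propagate by itself: it yields only $h_{n-1,1}^2\circ\sigma_\eps=h_{i-1,1}^2\circ\sigma_\eps$, hence a fresh dichotomy $h_{n-1,1}\circ\sigma_\eps=\pm\, h_{i-1,1}\circ\sigma_\eps$ at level $1$, and so on down the recursion. Your two cases cover only ``$+$ at every level'' (which indeed forces $h_{n-1,i-1}\circ\sigma_\eps=0$) and ``$-$ at level $0$''; the mixed branches --- ``$+$ down to level $j-1$, then $-$ at level $j$'' for $1\leq j\leq i-2$ --- are not addressed. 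Each of them is excluded by the level-$j$ analogue of your level-$0$ argument: on $\mathring H_n$ one has the strict inequality $\eps_j\eps_{j+1}h_{n-1,j}\circ\sigma_\eps<0$, while on $H_i$ one has $\eps_j\eps_{j+1}h_{i-1,j}\circ\sigma_\eps\leq 0$, and these are incompatible with $h_{n-1,j}\circ\sigma_\eps=-h_{i-1,j}\circ\sigma_\eps$. With that case analysis completed your proof is sound. (Alternatively, the whole computation can be replaced by induction on $n$ via Lemma~\ref{lem: induction for fronts 2}, which is how the paper later derives Corollary~\ref{cor: embed}.)
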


\begin{proof}
Recall the other smooth pieces $H_{i} = {}^{n-1} P^\eps_{i-1}$, for $i = 1, \ldots, n-1$, are independent of the last coordinate $x_n$.
\end{proof}

\subsubsection{General trees}
Now we consider a general signed rooted tree $\cT = (T, \rho, \eps)$. 

To each leaf $\beta \in \ell(\cT)$, we associate the linear signed  $A_{n+1}$-rooted tree $\cA_\beta = (A_\beta, \rho, a)$ 
 where $A_\beta$ is the full subtree of $T$ on the  vertices $v(A_\beta) = \{ \alpha \leq \beta \in v(T)\}$, and $a$ is the restricted sign decoration.

Consider the Euclidean space $\R^{n(\cT)}$. For each $\beta \in \ell(\cT)$, the inclusion $n(\cA_\beta) \subset n(\cT)$ induces a natural projection
$$
\xymatrix{
\pi_\beta: \R^{n(\cT)} \ar[r] & \R^{n(\cA_\beta)}
}
$$

\begin{definition}\label{def: arb model}
 Let $\cT = (T, \rho, \eps)$ be a signed rooted tree.

\begin{enumerate}

\item The arboreal model $\cT$-front is the multi-cooriented hypersurface given by the union
$$
\xymatrix{
H_{\cT} =  \bigcup_{\beta \in \ell(\cT)}  \pi_\beta^{-1}(H_{\cA_\beta})  \subset \R^{n(\cT)}
}
$$
where $H_{\cA_\beta} \subset \R^{n(\cA_\beta)}$ is the arboreal $\cA_{\beta}$-front.

\item The arboreal model $\cT$-Lagrangian is the union of the zero-section and positive conormal
$$
\xymatrix{
L_{\cT} = \R^{n(\cT)} \cup  T^+_{\R^{n(\cT)}} H_{\cT} \subset T^*\R^{n(\cT)}
}
$$

\item The arboreal  model $\cT$-Legendrian  is the the lift
$$
\xymatrix{
\Lambda_{\cT} = \{0\} \times L_\cT \subset  J^1 \R^{n(\cT)}
}
$$
\end{enumerate}

 Arboreal models $H_{\cT},L_{\cT}$ and $\Lambda_{\cT}$ corresponding to positive $\cT$ are called positive.
 
\end{definition}

      \begin{figure}[h]
\includegraphics[scale=0.5]{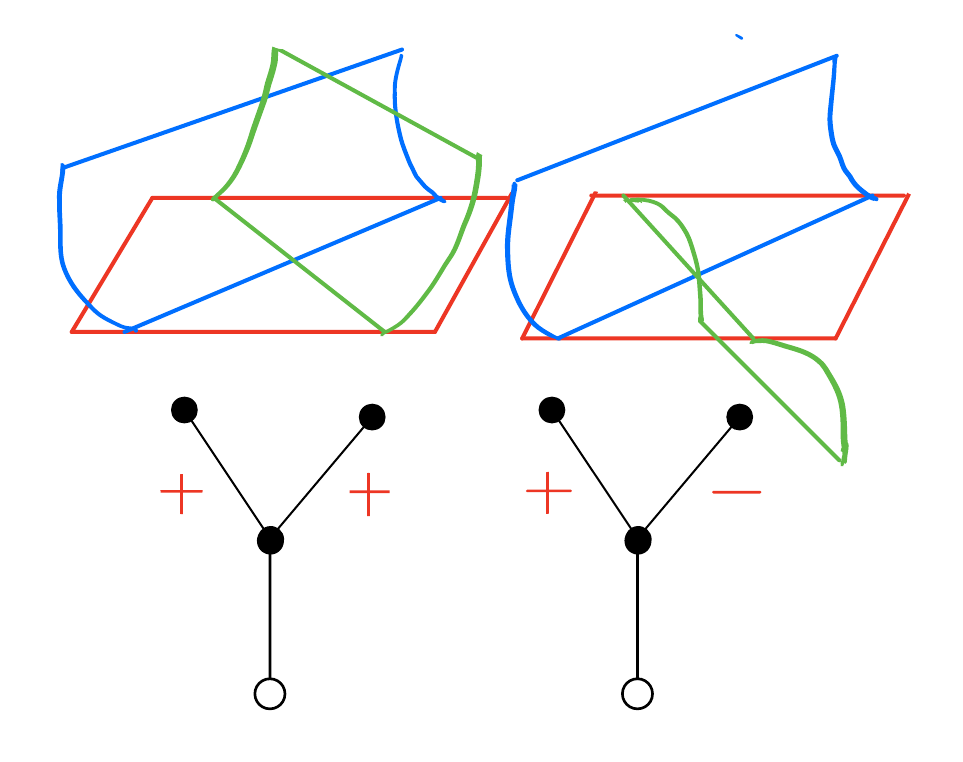}
\caption{Two non $A_n$-type fronts with different choices of signs.}
\label{fig:signsagain}
\end{figure}

\begin{remark}
When $\cT = \cA_{n+1}$, the above definition recovers Definition~\ref{def: type A} verbatim.
\end{remark}

Transporting  from the case of $\cA_{n+1}$,  we may naturally index the smooth pieces  of the $\cT$-front by non-root vertices
$$
\xymatrix{
H_{\alpha}  = \pi_\beta^{-1}(H_{\cA_{\beta}, \alpha}) \subset H_{\cT} & \alpha \in n(\cT)
}
$$
where $\beta \in \ell(\cT)$ is any leaf with $\alpha\leq \beta$, and $H_{\cA_{\beta}, \alpha} \subset  H_{\cA_\beta}$ is the corresponding smooth piece.
Likewise, we may index the smooth pieces of the $\cT$-Lagrangian  by vertices
$$
\xymatrix{
L_\rho = \R^{n(\cT)} \subset L_{\cT} 
}
$$
$$
\xymatrix{
L_{\alpha} =   T^+_{\R^{n(\cT)}} H_{\alpha}  \subset L_{\cT} & \alpha \in n(\cT) 
}
$$ 
and 
the smooth pieces of the $\cT$-Legendrian  by vertices
$$
\xymatrix{
\Lambda_{\alpha}   = \{0\} \times L_\alpha  \subset \Lambda_{\cT} &  \alpha  \in v(\cT) 
}
$$

Let us record a basic compatibility of the above Lagrangians and Legendrians.

Fix a signed rooted tree $\cT = (T, \rho, \eps)$. 
Let us first consider the situation when there is a single vertex $\rho' \in \cT$ adjacent to $\rho$.
Let $\cT' = \cT \setminus \rho$ be the signed rooted tree with root $\rho'$ and restricted signs. 

Let $\alpha_1, \ldots, \alpha_k \in \cT'$ be the vertices adjacent to $\rho'$, and $\eps_1, \ldots, \eps_k$ the signs of $\cT$ assigned to the respective edges from $\rho'$ to  $\alpha_1, \ldots, \alpha_k$.

Let $L^\infty_\cT \subset S^* \R^{n(\cT)}$ be the ideal Legendrian boundary of $L_\cT \subset T^* \R^{n(\cT)}$.
Note that $L^\infty_\cT$ lies in the open subspace $J^1 \R^{n(\cT')} \simeq \{ p_{\rho'} = 1\} \subset S^* \R^{n(\cT)}$.

\begin{lemma}\label{lem: tilt arb}
The  contactomorphism
$$
\xymatrix{
S:  J^1\R^{n(\cT')} \ar[r] &   J^1\R^{n(\cT')}
}
$$
$$
\xymatrix{
S(x_{\rho'}, x, p) = (x_{\rho'} - \sum_{i = 1}^k \eps_i p_{\alpha_i}^2/4,  \hat x, p)
}
$$ 
$$
\xymatrix{
\hat x_{\alpha_i} = x_{\alpha_i} + \eps_i p_1/2, \mbox{ for } i = 1, \ldots, k,  &  \hat x_\beta = x_\beta \mbox{ else }
}
$$
takes the  Legendrian  $L^\infty_{\cT}$  isomorphically to
the Legendrian
$\{0\} \times L_{\cT'} $. 

Thus $L^\infty_{\cT}$ itself is a model arboreal Legendrian of type $\cT' = \cT \setminus \rho$.

 \end{lemma}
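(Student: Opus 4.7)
The plan is to reduce the assertion to the linear type-$A$ version (Lemma~\ref{lem: tilt signs}) by decomposing $L^\infty_\cT$ into branches indexed by the children $\alpha_1, \ldots, \alpha_k$ of $\rho'$ and verifying that $S$ acts on each branch as the corresponding one-branch tilt while leaving the remaining coordinates fixed. First, since in the linear case each graphical piece $H_{\cA_\beta,\alpha}$ is cooriented by the first non-root coordinate $\partial_{x_{\rho'}}$, the same holds for every smooth piece $H_\alpha = \pi_\beta^{-1}(H_{\cA_\beta,\alpha})$ of $H_\cT$. Thus $L^\infty_\cT$ lies in $\{p_{\rho'}>0\}$, whose normalization $\{p_{\rho'}=1\}$ carries the contact form $dx_{\rho'} + \sum_{\alpha\in n(\cT')}p_\alpha dx_\alpha$ identifying it with $J^1\R^{n(\cT')}$. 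A short direct computation checks $S^*(dx_{\rho'} + \sum p_\alpha dx_\alpha) = dx_{\rho'} + \sum p_\alpha dx_\alpha$, so $S$ is a contactomorphism.

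The key support calculation is that for each $\alpha\in n(\cT)$ the hypersurface $H_\alpha$ depends only on the coordinates $x_\gamma$ with $\rho\neq\gamma\leq\alpha$. Indeed, in the chain $\cA_\beta$ the piece $H_{\cA_\beta,\alpha}={}^{N-1}\Gamma^{\eps}_{j-1}$ involves only the first $j$ coordinates, where $j$ is the position of $\alpha$ along the chain; this matches across different choices of leaf $\beta\geq\alpha$, confirming that $H_\alpha$ is well-defined independent of $\beta$. It follows that the positive conormal $L^\infty_\alpha$ has $p_\gamma=0$ whenever $\gamma\not\leq\alpha$. In particular, if $\alpha$ lies in the branch through $\alpha_i$, then $p_{\alpha_j}\equiv 0$ on $L^\infty_\alpha$ for all $j\neq i$.

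This vanishing collapses the formula for $S$ restricted to the branch-$i$ piece of $L^\infty_\cT$ to
\[
(x_{\rho'}, x, p) \longmapsto \bigl(x_{\rho'} - \eps_i p_{\alpha_i}^2/4,\ x_{\alpha_i} + \eps_i p_{\alpha_i}/2,\ x_{\text{rest}},\ p\bigr),
\]
acting nontrivially only in the $(x_{\rho'}, x_{\alpha_i})$ pair. For each leaf $\beta\geq\alpha_i$, Lemma~\ref{lem: tilt signs} applied to the chain $\cA_\beta$ (whose leading sign is precisely $\eps_i$) identifies this tilt as the one carrying $\Lambda_{\cA_\beta}$ onto $\{0\}\times L_{\cA_\beta\setminus\rho}$. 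Pulling back under $\pi_\beta$ and taking the union over leaves in branch $i$ yields that $S$ sends the branch-$i$ piece of $L^\infty_\cT$ onto $\{0\}\times$(branch-$i$ piece of $L_{\cT'}$). The central piece $L^\infty_{\rho'}$ is handled for free: all $p_{\alpha_j}$ vanish on it, $S$ acts as the identity, and $L^\infty_{\rho'} = \{x_{\rho'}=0\}\times\{0\}$ already matches $\{0\}\times L_{\cT',\rho'}$. Taking the union over all pieces gives $S(L^\infty_\cT) = \{0\}\times L_{\cT'}$, and the final assertion follows from Definition~\ref{def: arb model} applied to $\cT'$, since contactomorphisms preserve the arboreal class.

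The main obstacle is the bookkeeping needed to verify that the branch pieces of $L^\infty_\cT$ are truly decoupled in the $p$-coordinates, so that the single contactomorphism $S$ acts as a superposition of the linear tilts from Lemma~\ref{lem: tilt signs}, one per branch. The support calculation of the second paragraph is what makes this decoupling possible; once it is in hand, the reduction to the linear case becomes essentially automatic.
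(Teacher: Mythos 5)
Your proposal is correct and follows essentially the same route as the paper's (much terser) proof: both rest on the observation that each smooth piece of $L_\cT$ is independent of the coordinates $x_\gamma$ indexed by vertices $\gamma$ not lying below it, so the dual momenta $p_{\alpha_j}$ vanish there and $S$ collapses to the single tilt of Lemma~\ref{lem: tilt signs} on each type-$\cA$ subcomplex. The extra details you supply (the contact-form check, the well-definedness of $H_\alpha$ across leaves, and the central piece $L^\infty_{\rho'}$) are correct elaborations of what the paper leaves implicit.
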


\begin{proof}
For each leaf vertex of $\cT$, we have a linear signed type $\cA$ subtree of $\cT$ given by the vertices running from $\rho$ to the leaf. By Definition~\ref{def: arb model}, 
$L_{\cT}$  is the union of the corresponding linear signed type $\cA$ subcomplexes 
$L_{\cA}$. Each such subcomplex is independent of the coordinate $x_\beta$ indexed by vertices $\beta$ not in the subtree, hence lies in the zero locus of the dual coordinate $p_\beta$. Thus transport of each $L^\infty_{\cA}$ under the contactomorphism of the lemma reduces to that of  Lemma~\ref{lem: tilt signs}.
\end{proof}

More generally, suppose $\rho_1,\dots, \rho_\ell$ are the vertices adjacent to $\rho$. 
Observe that $\cT \setminus \rho$ is a disjoint union of signed rooted subtrees  $\cT_j \subset \cT \setminus \rho$,
 for $j=1,\dots, \ell$, 
with $\rho_j$ as root and restricted signs. 
Let $\cT^+_j = \cT_j \cup \rho \subset \cT$ be the 
signed rooted subtree with $\rho$ readjoined as root and with restricted signs. 
Set $c_j = n(\cT) \setminus n(\cT_j)$.

Let $L^\infty_\cT \subset S^* \R^{n(\cT)}$ be the ideal Legendrian boundary of $L_\cT \subset T^* \R^{n(\cT)}$.
We  similarly have $L^\infty_{\cT^+_j}  \subset S^* \R^{n(\cT_j^+)}$
 the ideal Legendrian boundary of  $L_{\cT_j^+} \subset T^* \R^{n(\cT_j^+)}$.

Since $\rho_j$ is the unique vertex adjacent to $\rho$ within $\cT_j^+$, observe that  $L_{\cT_j^+} $ is connected and in fact lies in $$J^1\R^{n(\cT_j)} =  \{p_{\rho_j} = 1\}  \subset S^* \R^{n(\cT_j^+)}.$$ Moreover, 
observe that $L^\infty_\cT$ is the disjoint union of the connected components 
$$
\Lambda_j = L^\infty_{\cT^+_j} \times \R^{c_j} \subset J^1\R^{n(\cT_j)} \times T^*\R^{c_j}  = \{p_{\rho_j} = 1\} \subset S^* \R^{n(\cT)}
$$ 
By Lemma~\ref{lem: tilt arb}, $L^\infty_{\cT^+_j} \subset J^1\R^{n(\cT_j)}$  is a model aboreal Legendrian of type $\cT_j$, so $\Lambda_j = L^\infty_{\cT^+_j} \times \R^{c_j} \subset J^1\R^{n(\cT_j)} \times T^*\R^{c_j}$ is a stabilized model arboreal Legendrian of type $\cT_j$. This proves:

\begin{lemma} \label{lm: can models are arb}
Fix a signed rooted tree $\cT = (T, \rho, \eps)$. 

Let $\rho_1,\dots, \rho_k$ be the vertices adjacent to $\rho$. 
Let $\cT_j \subset \cT \setminus \rho$ be the signed rooted subtree with $\rho_j$ as root and restricted signs,
and 
 $\cT^+_j = \cT_j \cup \rho \subset \cT$  the 
signed rooted subtree with $\rho$ readjoined as root and with restricted signs. 
Set $c_j = n(\cT) \setminus n(\cT_j)$.

Then the ideal Legendrian boundary  $L^\infty_\cT \subset S^* \R^{n(\cT)}$  of the model arboreal Lagrangian
 $L_\cT \subset T^* \R^{n(\cT)}$ of type $\cT$ is the disjoint union of the Legendrians
 $$\Lambda_j  = L^\infty_{\cT^+_j}  \times \R^{c_j} \subset S^* \R^{n(\cT)},$$
 which are stabilized model  arboreal Legendrians  of type $\cT_j$.
 \end{lemma}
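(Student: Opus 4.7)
The proof is essentially assembled from the discussion immediately preceding the statement, so I would present it as a short synthesis rather than a fresh argument. My plan is: first, exhibit the decomposition of $L_\cT^\infty$ into connected pieces indexed by the root-adjacent vertices $\rho_1,\dots,\rho_k$; second, identify each piece as a stabilized copy of $L_{\cT_j^+}^\infty$; third, apply Lemma~\ref{lem: tilt arb} to identify each $L_{\cT_j^+}^\infty$ as a model arboreal Legendrian of type $\cT_j$.

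For the first step I would use Definition~\ref{def: arb model}: $L_\cT$ is the union of the zero-section and the positive conormals to the hypersurfaces $H_\alpha$, and the latter split by leaf $\beta\in\ell(\cT)$ into the pullbacks $\pi_\beta^{-1}(H_{\cA_\beta})$. Each leaf $\beta$ lies above a unique root-adjacent vertex $\rho_j$, so the conic Lagrangian pieces group themselves according to which $\rho_j$ sits on the path from $\rho$ to $\beta$. At infinity, this grouping is exactly the connected component decomposition of $L_\cT^\infty\subset S^*\R^{n(\cT)}$, because distinct $\rho_j$ contribute nonparallel asymptotic codirections at the origin. In particular the $j$-th component lands in the affine chart $\{p_{\rho_j}=1\}\cong J^1\R^{n(\cT)\setminus\rho_j}$.

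For the second step, I observe that each smooth piece $L_\alpha$ with $\alpha\in n(\cT_j)$ is defined by equations involving only the coordinates $x_\gamma$ for $\gamma\in n(\cT_j^+)$ (since the relevant $h_{i,j}$ depend only on those coordinates along the path from $\rho$ to $\beta$). Consequently it is constant in the complementary coordinates indexed by $c_j=n(\cT)\setminus n(\cT_j)$, which means the corresponding piece of $L_\cT$ takes the form $L_{\cT_j^+,\alpha}\times\R^{c_j}\subset T^*\R^{n(\cT_j^+)}\times T^*\R^{c_j}$. Passing to ideal boundaries in the $\rho_j$-direction and using $J^1\R^{n(\cT_j)}\times T^*\R^{c_j}=\{p_{\rho_j}=1\}\subset S^*\R^{n(\cT)}$, the $j$-th component of $L_\cT^\infty$ becomes $\Lambda_j=L_{\cT_j^+}^\infty\times\R^{c_j}$ as claimed.

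For the third step, I invoke Lemma~\ref{lem: tilt arb} applied to the signed rooted tree $\cT_j^+$, whose root $\rho$ has the single adjacent vertex $\rho_j$. That lemma produces an explicit contactomorphism of $J^1\R^{n(\cT_j)}$ carrying $L_{\cT_j^+}^\infty$ to $\{0\}\times L_{\cT_j}$, which is by Definition~\ref{def: arb model} the arboreal Legendrian of type $\cT_j$; stabilizing by $\R^{c_j}$ then realizes $\Lambda_j$ as a stabilized model arboreal Legendrian of type $\cT_j$. The only point needing care is the connectedness/disjointness claim in step one, and it is handled by the remark that within $S^*\R^{n(\cT)}$ the chart conditions $\{p_{\rho_j}=1\}$ for different $j$ are mutually disjoint and each $L_{\cT_j^+}^\infty$ is itself connected by Lemma~\ref{lem: tilt arb}. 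No nontrivial obstacle arises; the content is entirely bookkeeping of which coordinates appear in which smooth piece.
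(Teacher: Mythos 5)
Your proposal is correct and follows essentially the same route as the paper: decompose $L^\infty_\cT$ into the components lying in the charts $\{p_{\rho_j}=1\}$ (using that each smooth piece $L_\alpha$ is independent of the coordinates outside $n(\cA_\beta)$, so its conormal direction has $p_{\rho_i}=0$ for $i\neq j$), identify each component as $L^\infty_{\cT_j^+}\times\R^{c_j}$, and apply Lemma~\ref{lem: tilt arb} to $\cT_j^+$ to recognize $L^\infty_{\cT_j^+}$ as the model of type $\cT_j$. The only slight looseness is your phrasing that the charts $\{p_{\rho_j}=1\}$ are "mutually disjoint" — they are not as subsets of $S^*\R^{n(\cT)}$; what makes the union disjoint is that $\Lambda_j$ actually lies in the smaller locus $\{p_{\rho_j}>0,\ p_{\rho_i}=0 \text{ for } i\neq j\}$, which your coordinate bookkeeping already establishes.
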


By Lemma~\ref{lem: leaf pruning}, 
we also  have the following.

\begin{cor}\label{cor:int-arb}
For  $\beta \in \ell(\cT)$ a leaf vertex,  and $\mathring H_{\beta}  \subset H_{\cT} $ the interior of the corresponding smooth piece, we have
 $$
\xymatrix{
H_{\cT} \setminus \mathring H_\beta = H_{\cT \setminus \beta} \times \R^\beta
}
$$
inside of $\R^{n(\cT)} = \R^{n(\cT \setminus \beta)} \times \R^\beta$.
\end{cor}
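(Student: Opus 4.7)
The statement is a purely combinatorial unpacking of Definition~\ref{def: arb model} combined with the $A$-type pruning already established in Lemma~\ref{lem: leaf pruning}. Let $\beta' \in v(T)$ denote the unique vertex adjacent to the leaf $\beta$ (its parent in the rooted tree).

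First I would split the defining union
\begin{equation*}
H_\cT = \bigcup_{\gamma \in \ell(\cT)} \pi_\gamma^{-1}(H_{\cA_\gamma})
\end{equation*}
into the $\gamma = \beta$ term and the remaining $\gamma \neq \beta$ terms, and treat each separately. For $\gamma \neq \beta$, the leaf $\beta$ does not lie in $v(A_\gamma)$, so the projection $\pi_\gamma \colon \R^{n(\cT)} \to \R^{n(\cA_\gamma)}$ factors through the product projection $\R^{n(\cT)} \simeq \R^{n(\cT \setminus \beta)} \times \R^\beta \to \R^{n(\cT \setminus \beta)}$; writing $\pi_\gamma'$ for the corresponding projection out of $\R^{n(\cT \setminus \beta)}$, this yields the identification $\pi_\gamma^{-1}(H_{\cA_\gamma}) = (\pi_\gamma')^{-1}(H_{\cA_\gamma}) \times \R^\beta$. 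For $\gamma = \beta$, Lemma~\ref{lem: leaf pruning} applied to the linear signed tree $\cA_\beta$ with its own leaf $\beta$ gives the cooriented identification $H_{\cA_\beta} \setminus \mathring H_{\cA_\beta, \beta} = H_{\cA_{\beta'}} \times \R^\beta$ inside $\R^{n(\cA_\beta)} = \R^{n(\cA_{\beta'})} \times \R^\beta$. Since the open stratum $\mathring H_\beta$ of $H_\cT$ indexed by $\beta$ corresponds under $\pi_\beta$ to $\mathring H_{\cA_\beta, \beta}$, removing $\mathring H_\beta$ from $\pi_\beta^{-1}(H_{\cA_\beta})$ and keeping the complement inside $H_\cT$ replaces the $\gamma = \beta$ contribution by $\pi_{\beta'}^{-1}(H_{\cA_{\beta'}})$, where $\pi_{\beta'} \colon \R^{n(\cT)} \to \R^{n(\cA_{\beta'})}$.

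It remains to identify the resulting union with $H_{\cT \setminus \beta} \times \R^\beta$. Two combinatorial cases arise according to whether $\beta$ was the only child of $\beta'$ in $\cT$. If so, then $\beta'$ is a new leaf of $\cT \setminus \beta$, the subtree $\cA_{\beta'}$ computed in $\cT \setminus \beta$ agrees with $\cA_{\beta'}$ computed as a subtree of $\cT$, and the contribution $\pi_{\beta'}^{-1}(H_{\cA_{\beta'}})$ is precisely the $\beta'$-share of $H_{\cT \setminus \beta} \times \R^\beta$. If $\beta'$ has other children in $\cT$, it remains internal in $\cT \setminus \beta$, and any leaf $\gamma'$ of $\cT \setminus \beta$ above $\beta'$ has $\cA_{\beta'} \subset \cA_{\gamma'}$; the coordinate-stability encoded in Remark~\ref{rem: indep of last sign} (each piece ${}^{m-1}\Gamma^\eps_i$ is independent of the coordinates indexed above $i$) then gives $H_{\cA_{\beta'}} \times \R^{n(\cA_{\gamma'}) \setminus n(\cA_{\beta'})} \subset H_{\cA_{\gamma'}}$, so $\pi_{\beta'}^{-1}(H_{\cA_{\beta'}})$ is absorbed into $(\pi_{\gamma'}')^{-1}(H_{\cA_{\gamma'}}) \times \R^\beta$. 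The main obstacle is simply the case analysis and indexing bookkeeping; no new geometric input beyond Lemma~\ref{lem: leaf pruning} is required.
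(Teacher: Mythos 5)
Your proof is correct and takes essentially the same approach as the paper, which derives the corollary directly from Lemma~\ref{lem: leaf pruning} with no further written argument; your splitting of the defining union over leaves and the two-case analysis for the parent $\beta'$ is exactly the bookkeeping that citation leaves implicit. One small slip: the coordinate-stability you invoke (each piece ${}^{m-1}\Gamma^\eps_i$ depends only on $x_0,\dots,x_i$) is the paper's elementary identity ${}^n\Gamma_i = {}^i\Gamma_i\times\R^{n-i}$ (used also in the proof of Lemma~\ref{lem: leaf pruning}), not Remark~\ref{rem: indep of last sign}, which concerns independence of the \emph{sign} $\eps_i$ rather than of the coordinates.
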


 \subsubsection{Extended arboreal models}
 It will be useful for us also define {\em extended arboreal models} associated with rooted, but not signed trees $\sT=(T,\rho)$.
 
For the unsigned   rooted  tree $\sA_{n+1}=(A_{n+1},\rho)$ we define 
$$H_{\sA_{n+1}}:={}^{n-1}\Gamma\subset\R^n,$$ 
$$L_{\sA_{n+1}}:=\R^n\cup T^*_{\R^n}H_{\sA_{n+1}}\subset T^*\R^n,$$
$$\Lambda_{\sA_{n+1}}:=0\times L_{\sA_{n+1}}\subset J^1\R^n.$$

Similarly, for a general rooted tree $\sT=(T,\rho)$ we define

$$
\xymatrix{
H_{\sT} =  \bigcup_{\beta \in \ell(\sT)}  \pi_\beta^{-1}(H_{\sA_\beta})  \subset \R^{n(\cT)}
}
$$
where $H_{\sA_\beta} \subset \R^{n(\sA_\beta)}$ is the arboreal $\sA_{\beta}$-front.
Furthermore, we define
 $$
\xymatrix{
L_{\sT} = \R^{n(\sT)} \cup  T^+_{\R^{n(\sT)}} H_{\cT} \subset T^*\R^{n(\sT)}
}
$$
and
$$
\xymatrix{
\Lambda_{\sT} = \{0\} \times \Lambda_\sT \subset  J^1 \R^{n(\sT)}
}
$$
 
 Clearly, for any signed version $\cT$ of the tree $\sT$ we have
 $H_\cT\subset H_\sT, L_\cT\subset L_\sT, \Lambda_\cT\subset \Lambda_\sT.$

\begin{lemma} \label{lem: ext comp tree}

Given  a closed embedding $\Lambda^\infty_\cT  \subset \Lambda^\infty_\sT$ with
$\Lambda^\infty_{\cT,  \alpha} \subset \Lambda^\infty_{\sT, \alpha}$, for all $\alpha$,
 the front $\pi(\Lambda^\infty_\cT) \subset H_\sT$ is an embedding of $H_{\cT}$.
 
\end{lemma}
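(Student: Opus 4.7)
I would argue by induction on the number of vertices of the tree $T$, with the case when $T$ has only the root vertex $\rho$ being vacuous ($n(\cT) = \emptyset$, both fronts empty).

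For the inductive step, applying Lemma \ref{lm: can models are arb} to both $\cT$ and $\sT$ yields compatible decompositions of the ideal Legendrian boundaries $\Lambda^\infty_\cT$ and $\Lambda^\infty_\sT$ into disjoint unions of connected components indexed by the root-adjacent vertices $\rho_1, \ldots, \rho_k$, each a stabilization by $\R^{c_j}$ of an arboreal Legendrian for the subtree $\cT^+_j$ or $\sT^+_j$. Since the hypothesized closed embedding preserves the vertex stratification, it must respect these decompositions and restrict on each branch to a vertex-preserving closed embedding of the stabilized models.

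On each branch I next apply the tilt contactomorphism of Lemma \ref{lem: tilt arb} to transport the ideal Legendrian boundaries to standard arboreal models of the smaller trees $\cT_j$ (respectively $\sT_j$). In the multi-branch case $k > 1$, each $\cT^+_j$ has strictly fewer vertices than $\cT$, and the inductive hypothesis directly yields the desired front identification on each branch. In the single-branch case $k = 1$, the tilt identifies $\Lambda^\infty_\cT$ with $\{0\} \times L_{\cT \setminus \rho}$, reducing to the smaller signed tree $\cT \setminus \rho$. In either case the base of the recursion is the linear $A$-type setting, for which Lemma \ref{lem: ext comp} provides the required front identification (up to the involutions $\sigma_\eps$ and possible sign flips of the last coordinate).

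The final step is to verify that the branch-wise front identifications patch consistently into a single global embedding of $H_\cT$ into $H_\sT$. I expect this compatibility to be the main obstacle: whenever a non-root vertex $\alpha$ lies on paths to several leaves (hence in several branches of the decomposition), the Legendrian stratum $\Lambda^\infty_{\cT, \alpha}$ is unambiguously determined by the global embedding, so the branch-wise front identifications must agree on the overlapping strata. Tracking how the involutions permitted by Lemma \ref{lem: ext comp} at the $A$-leaves of the induction propagate through the tree and remain mutually consistent is the heart of the verification; the rigidity of the signed vertex stratification of $\cT$ ensures that a coherent global choice is always available.
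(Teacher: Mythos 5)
Your overall strategy differs from the paper's: you induct on the tree via the root--branch decomposition of Lemma \ref{lm: can models are arb} and the tilt of Lemma \ref{lem: tilt arb}, whereas the paper's proof is direct and non-inductive. It decomposes $\Lambda^\infty_\cT$ and $\Lambda^\infty_\sT$ into the \emph{leaf-indexed} linear type-$\cA$ subcomplexes $L^\infty_{\cA_\beta}$, $L^\infty_{\sA_\beta}$ (one for each path from the root to a leaf $\beta$), notes that each is independent of the coordinates $x_\gamma$ for $\gamma$ outside its subtree, applies Lemma \ref{lem: ext comp} to each to conclude its front is the standard $H_{\cA_\beta}$ after a coordinate change supported on the subtree's coordinates, and then observes that these coordinate changes agree on intersections of subtrees. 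Since $H_\cT$ is by definition the union of the $H_{\cA_\beta}$, this finishes the argument. Both routes bottom out in Lemma \ref{lem: ext comp}, but the leaf decomposition keeps everything at the level of fronts in a single fixed coordinate system.

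Two points in your proposal are genuine gaps rather than routine verifications. First, the tilt contactomorphism $S$ of Lemma \ref{lem: tilt arb} shears the base coordinates by amounts depending on $p$, so it does \emph{not} commute with the front projection $S^*\R^{n(\cT)} \to \R^{n(\cT)}$. After tilting a branch to $\{0\}\times L_{\cT_j}$ you have changed the front: the inductive hypothesis then gives you information about a front in the wrong coordinates, and you cannot directly transport the conclusion back to a statement about $\pi(\Lambda^\infty_{\cT})$ inside $H_\sT$. (This is precisely why Lemma \ref{lem: ext comp} is formulated about the front $\Upsilon = \pi({}^n\Lambda^\eps)$ itself rather than about the tilted Legendrian.) Second, you identify the consistency of the coordinate changes across overlapping strata as ``the heart of the verification'' and then only assert that ``rigidity'' makes it work. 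In the paper's setup this step is genuinely easy to justify: the coordinate change produced by Lemma \ref{lem: ext comp} is the sign involution $\sigma_\eps$ (possibly composed with a flip of the last coordinate), and the sign it assigns to $x_\alpha$ is determined by the edge data along the unique path from $\rho$ to $\alpha$, which is the same for every leaf $\beta \geq \alpha$; hence the changes agree on shared vertices. In your branch-indexed decomposition the distinct branches share no coordinates at all, so the interesting overlaps (two leaves with a common non-root ancestor) are hidden inside a single branch and pushed down the recursion --- combined with the tilting issue above, the asserted coherence is exactly what remains to be proved.
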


\begin{proof}
For each leaf vertex of $\cT$, we have a linear signed type $\cA$ subtree of $\cT$ given by the vertices running from $\rho$ to the leaf. By construction, 
$\Lambda^\infty_{\cT}$ and $\Lambda^\infty_{\sT}$  are the union of the corresponding type $\cA$ subcomplexes 
$L^\infty_{\cA}$ and $L^\infty_{\sA}$. Each such subcomplex is independent of the coordinates $x_\beta$ indexed by vertices $\beta$ not in the subtree. Now Lemma~\ref{lem: ext comp} confirms $\pi(L^\infty_{\cA})$ is 
 the standard  embedding of $H_{\cA}$ after a change of coordinates
 $x_\alpha$ indexed by vertices $\alpha$ in the subtree. Moreover, the change of coordinates agrees for  
  $x_\alpha$ indexed by vertices $\alpha$ in the intersection of such subtrees.
  By definition, $H_{\cT}$ is the union of the $H_{\cA}$.
\end{proof}


\section{The stability theorem}\label{sec:stab}

In this section we  define  arboreal Lagrangian and Legendrian subsets and prove    their stability under   symplectic  reduction  and   Liouville cone operations.

\subsection{Arboreal Lagrangians and Legendrians}\label{ss:arb lags and legs}
\begin{definition} Arboreal Lagrangians and Legendrians are defined as follows:
\begin{itemize}
\item[(a)] A closed subset $L\subset X$ of a $2m$-dimensional symplectic manifold $(X,\om)$ is called an {\em arboreal Lagrangian} if  the germ of $(X, L)$ at any point $\lambda\in L$  is symplectomorphic to the germ of the pair $(T^*\R^{n}\times T^*\R^{m-n},L_{\cT}\times \R^{m-n})$  at the origin,
for a signed rooted tree $\cT$  with $n:=n(\cT)\leq m$.

\item[(b)] A closed
subset $\Lambda\subset Y$ of a $(2m+1)$-dimensional contact manifold $(Y,\xi)$ is called am {\em arboreal Legendrian} if  the  germ of  $(Y,\Lambda)$ at any point $\lambda\in\Lambda$   is contactomorphic to the germ of  $(J^1(\R^{n}\times\R^{m-n})=J^1\R^{n }\times  T^*\R^{m-n},\Lambda_{\cT}\times\R^{m-n})$ at the origin,  
for a signed rooted tree $\cT$  with $n:=n(\cT)\leq m$.

\item[(c)] A closed  subset $H\subset M$ of an  $(m+1)$-dimensional manifold $M$ is called an {\em arboreal front}
 if the germ of $(M, H)$ at any point $m \in M$ is diffeomorphic to the germ of $(\R^{n+1} \times \R^{m-n}, H_\cT \times \R^{m-n})$
 at the origin, for a signed rooted tree $\cT$  with $n:=n(\cT)\leq m$.
\end{itemize}

The pair $(\cT,m)$ is called the {\em arboreal type} of the germ of  $L$, $\Lambda$, or $H$ at the given point.
We say $L$, $\Lambda$, or $H$ is {\em positive} if it is locally modeled  on positive arboreal models at all points.
\end{definition}

\begin{remark}
Later we will also allow arboreal Lagrangians to have boundary and even corners, but throughout the present discussion we restrict to the above definition for simplicity.
\end{remark}

Given an arboreal Lagrangian  we call
 $\sup_{\lambda\in L}  \{n(\cT(\lambda))\}$ the {\em maximal order} of $L$, where $\cT(\lambda)$ is a the signed  rooted tree describing the  germ of $L$ at the point $\lambda$. Similarly,  we define  the maximal order of arboreal Legendrians and fronts.
 
Every arboreal Lagrangian or Legendrian  is naturally stratified by isotropic strata
indexed by the corresponding tree type.
A Lagrangian distribution $\eta$  in $X$ is called transverse to an arboreal Lagrangian $L$ if it is transverse  to all top-dimensional strata of $L$. Similarly a Legendrian distribution $\eta\subset\xi$   in  a contact $(Y,\xi)$ is called transverse to an arboreal Legendrian $\Lambda$ if it has trivial intersection with tangent planes to all  top-dimensional strata of $\Lambda$. 

\begin{definition} A {\em polarization} of $L$ or $\Lambda$ is a transverse Lagrangian distribution. \end{definition}

\begin{remark} We emphasize the transversality to an  arboreal Lagrangian means transversality to its {\em closed} smooth  pieces, and not just to open strata. \end{remark}

Before we continue we introduce some auxiliary notions. Let $V$ be a symplectic vector space and $\ell_1,\ell_2,\ell_3 \subset V$ linear Lagrangian subspaces which are pairwise transverse. We write $\ell_1 \prec \ell_2 \prec \ell_3$ if $\ell_3$ corresponds to a positive definite quadratic form with respect to the polarization $(\ell_1,\ell_2)$ of $V$. Let $C \subset  V$ be a coisotropic subspace. For any linear Lagrangian subspace $\ell \subset V$ we denote by $[\ell]^C$ the symplectic reduction of $\ell$ with respect to $C$.

  Let $L$ be an arboreal Lagrangian whose germ at a  point $\lambda\in L$ has the type $(\cT=(T,\rho,\eps),m)$. Let $L_\rho\subset T_\lambda X$ the tangent plane to the root Lagrangian corresponding to the root $\rho$. For each vertex $\alpha$ connected by an edge with $\rho$ let $L_\alpha\subset T_\lambda X$ denote the Lagrangian plane tangent to the Lagrangian  corresponding to the vertex $a$.  We recall that $L_\rho$ and $L_\alpha$ cleanly  intersect along a codimension $1$ subspace.
Consider a coistropic subspace  $C_\alpha:=\Span(L_\rho, L_\alpha)\subset T_\lambda X $. Let $\eta$ be a Lagrangian distribution in $X$ transverse to  $L$.
Define the sign
\begin{equation}\label{eq:arb-sign}
\eps(\eta,L,\alpha)=\begin{cases}
 +1,&\hbox{if}\;\;
 [L_\rho]^{C_\alpha}\prec [L_\alpha]^{C_\alpha}\prec  [\eta]^{C_\alpha};\\
 -1,&\hbox{if}\;\;
 [L_\rho]^{C_\alpha}\prec [\eta]^{C_\alpha}\prec  [L_\alpha]^{C_\alpha}.
 \end{cases}
\end{equation}

    \begin{figure}[h]
\includegraphics[scale=0.6]{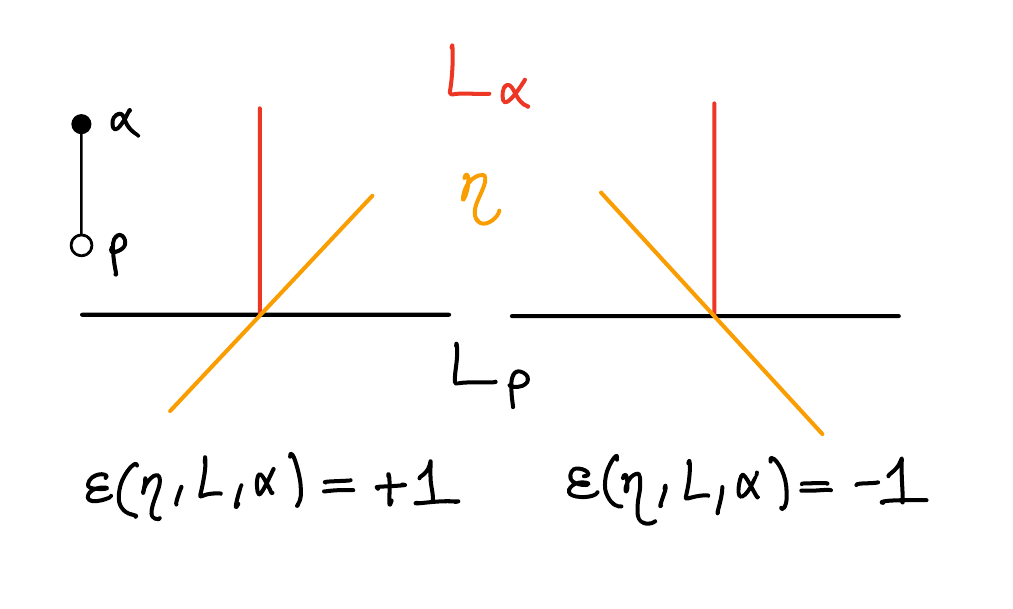}
\caption{The notion of sign for the $A_2$ singularity.}
\label{fig:polarsign}
\end{figure}

Similarly, if $\Lambda$ is an arboreal Legendrian in a contact manifold $(Y,\xi)$, and $\eta$ a Legendrian distribution transverse to $\Lambda$, then for any point $\lambda\in\Lambda$ of type $\cT=(T,\rho,\eps)$ we assign a sign $ \eps(\eta,\Lambda,\alpha)$ for every vertex $\alpha$ adjacent  to the root $\rho$ as equal to $\pm 1$  depending on the $\prec$-order of the triple $ [L_\rho]^{C_\alpha}, [L_\alpha]^{C_\alpha},  [\eta]^{C_\alpha}$ in $[\xi_\lambda]^{C_\alpha}$.

  
%
%
%
%
%
  
  \subsection{Stability of arboreal Lagrangians and Legendrians}
  
  The following is the main result of Section \ref{sec:stab}. We use below the notation $\ssT^*M$ for the germ of the cotangent bundle $T^*M$ along $M$.

 \begin{theorem}\label{thm:unique-signed}
 Let $\cT$ be a signed rooted tree. Let $\rho_1,\dots,\rho_k$ be vertices adjacent to the root $\rho$ and $\cT_j$ be subtrees  with roots $\rho_j$ (where we removed the decoration of edges $[\rho_j\alpha]$). Let $\phi_j:
  \ssT^*\R^m\to J^1\R^m$, $m\geq n=n(\sT)$, be  germs of Weinstein hypersurface embeddings with disjoint images.  Denote $z_j:=\phi_j(0)$, $\Lambda^j=\phi_j(L_{\cT_j}\times\R^{m-n(\cT_j)})$,  $j=1,\dots, k$.  
  Suppose that 
 \begin{itemize}
 \item[(i)] $\pi(z_j)=0$;
 \item[(ii)]  the arboreal Legendrian $\Lambda:=\bigcup_{j=1}^k\Lambda^j$ projects transversely under the front projection $J^1\R^n\to\R\times \R^n$;
 \item[(iii)] for each edge $[\rho_j\alpha]$  we have $\eps(\nu,\Lambda^j,\alpha)=\eps_{[\rho_j\alpha]}$.
 \end{itemize}
   Then $\R^m\cup C(\Lambda)$, where $C(\Lambda)$ is the Liouville cone of $\Lambda$, is an  arboreal Lagrangian of type $(\cT,m)$ or equivalently, the  germ of the front $\pi(\Lambda)$ is diffeomorphic to $H_\cT\times \R^{m-n(\cT)}$.   \end{theorem}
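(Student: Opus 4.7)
The plan is to proceed by induction on $n(\cT)$, using Lemma~\ref{lm: can models are arb}, which identifies the ideal Legendrian boundary of the canonical model $L_\cT$ as the disjoint union of stabilized models of type $\cT_j$, to connect the Liouville cone operation with the standard arboreal models. The base case $n(\cT) = 0$ is vacuous (here $k=0$), and the conclusion reduces to the zero section being an arboreal Lagrangian. For the inductive step, the strategy is to normalize each Weinstein hypersurface embedding $\phi_j$ so that $\Lambda^j$ coincides with the standard stabilized Legendrian $L^\infty_{\cT_j^+} \times \R^{c_j}$, where $\cT_j^+ = \cT_j \cup \rho$ and $c_j = n(\cT) \setminus n(\cT_j)$. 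Once this normalization is achieved, Lemma~\ref{lm: can models are arb} directly implies $\R^m \cup C(\Lambda) \simeq L_\cT \times \R^{m-n(\cT)}$, yielding the theorem.

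I would first reduce the problem to the front projection. Condition (ii) ensures the Legendrians project bijectively to their fronts and the collection $\pi(\Lambda^j)$ meets transversally; hence the stratified germ of $\R^m \cup C(\Lambda)$ is recoverable from the cooriented germ of the front $\pi(\Lambda)$. Each individual front $\pi(\Lambda^j)$ is an arboreal front of type $\cT_j$ (translated by the Weinstein hypersurface $\phi_j(T^*\R^m)$), and by the inductive hypothesis it can be brought to $H_{\cT_j^+} \times \R^{m-n(\cT_j^+)}$ via a local diffeomorphism of the front target near $\pi(z_j) = 0$ (condition (i)). Lifting through the front projection, this produces a contactomorphism of $J^1\R^m$ that normalizes $\phi_j$ individually.

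The main obstacle is combining the individual normalizations into a single ambient one compatible across all $j$. I would address this by an inductive Moser-type construction: working outward along the partial order of vertices of $\cT$, at each step use the genericity of the front intersection (condition (ii)) to decouple the branches via an isotopy-extension argument, maintaining the stratified transverse structure throughout. The key geometric input is that the different $\pi(\Lambda^j)$ intersect each other only along lower-dimensional strata, generically meeting only through the zero section image $\R^m$. The sign condition (iii), matching $\eps(\nu, \Lambda^j, \alpha)$ with the prescribed edge signs $\eps_{[\rho_j \alpha]}$, pins down the local coorientations of the normalized fronts in the pattern prescribed by Definition~\ref{def: arb model}, thus ruling out any residual ambiguity coming from reflections in the coordinate directions. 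Finally, Lemma~\ref{lem: ext comp tree} confirms that the resulting normalized front coincides with $H_\cT \times \R^{m-n(\cT)}$ as a cooriented hypersurface germ, completing the inductive step.
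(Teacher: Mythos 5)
There is a genuine gap, and it sits exactly where the paper does all of its work. Your induction is on $n(\cT)$, and you propose to normalize each branch front $\pi(\Lambda^j)$ to $H_{\cT_j^+}\times\R^{m-n(\cT_j^+)}$ ``by the inductive hypothesis.'' But $n(\cT_j^+)=|v(\cT_j)|$ and $n(\cT)=\sum_j|v(\cT_j)|$, so when $k=1$ you have $\cT_1^+=\cT$ and the appeal to the inductive hypothesis is circular: normalizing the single branch \emph{is} the statement to be proven. The case $k=1$ includes every linear tree $\cA_{n+1}$, and this is precisely the case the paper isolates as Theorem \ref{thm: quad unique} and proves by a separate induction on dimension, via Propositions \ref{prop:divisibility} and \ref{prop:vf}. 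Your proposal contains no argument that substitutes for that theorem.

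Relatedly, the geometric picture underlying your ``Moser-type / isotopy-extension'' step misidentifies where the difficulty lies. Condition (ii) does make the fronts of \emph{distinct} branches $\pi(\Lambda^{j_1}),\pi(\Lambda^{j_2})$ mutually transverse, and transverse crossings can indeed be decoupled by general position; the paper handles this cheaply (the leaf-by-leaf induction in the proof of Proposition \ref{prop:unique-unsigned}, treating complementary directions as parameters). The hard part is that the smooth pieces \emph{within} a single branch are mutually \emph{tangent} along the loci computed in Lemma \ref{lem: tang} (already for $\cA_3$ one must match $\{x_0=0\}$ with $\{x_0=\alpha x_1^2\}$, and for $\cA_4$ three mutually tangent graphs). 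Tangencies are not stable under perturbation and admit no general-position argument, so isotopy extension does not apply: one must first align all primary tangency loci (Lemmas \ref{lm:TT} and \ref{lem:ind}), then show the residual freedom is the explicit functional form $x_0=(1+\beta\prod_j h_{n,j}^2)h_n^2$ (Proposition \ref{prop:divisibility}), and finally remove it by an explicit vector field $h_t v_{n-1}$ that simultaneously preserves all other pieces and the already-normalized tangencies (Proposition \ref{prop:vf}, resting on Lemma \ref{lem: sym} and the divisibility of $h_t$ by $\prod h_{n,j}$). Your uses of Lemma \ref{lm: can models are arb}, of condition (iii) to fix signs, and of Lemma \ref{lem: ext comp tree} to pass from the unsigned to the signed model are all consistent with the paper's reduction of Theorem \ref{thm:unique-signed} to Proposition \ref{prop:unique-unsigned}; what is missing is the entire analytic core.
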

Theorem \ref{thm:unique-signed} is a corollary of its unsigned version which is the content of the following proposition.
  \begin{proposition}\label{prop:unique-unsigned}
   Let $\sT$ be a   rooted tree. Let $\rho_1,\dots,\rho_k$ be vertices adjacent to the root $\rho$ and $\sT_j$ be subtrees  with roots $\rho_j$. Let $\phi_j:
  \ssT^*\R^m\to J^1\R^m$, $m\geq n=n(\sT)$, be   germs of Weinstein hypersurface embeddings.  Denote $z_j:=\phi_j(0)$, $  \Lambda^j=\phi_j( L_{\sT_j}\times\R^{m-n(\sT_j)})$,  $j=1,\dots, k$.  
  Suppose that 
 \begin{itemize}
 \item[(i)] $\pi(z_j)=0$;
 \item[(ii)]  the extended arboreal Legendrian $\Lambda:=\bigcup_{j=1}^k\Lambda^j$ projects transversely under the front projection $J^1\R^n\to\R\times \R^n$;
 \end{itemize}
   Then $\R^m\cup C(\Lambda)$ is an  extended arboreal Lagrangian of type $(\sT,m)$, or equivalently, the  germ of the front $\pi(\Lambda)$ is diffeomorphic to $H_\sT\times \R^{m-n(\sT)}$. 
    \end{proposition}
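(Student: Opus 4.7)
I would prove Proposition~\ref{prop:unique-unsigned} by induction on the number of vertices of $\sT$. The base case, where $\sT$ consists only of the root $\rho$, is vacuous: no subtrees exist, $\Lambda = \emptyset$, and the claim reduces to the tautology that $\R^m$ is the extended arboreal model of type $(\{\rho\}, m)$.

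For the inductive step, I would first reduce to the minimal dimension $m = n(\sT)$. Using the product structure $\hat \Lambda^j = \phi_j(\hat L_{\sT_j} \times \R^{m-n(\sT_j)})$ together with the transversality hypothesis (ii), one identifies a common $(m - n(\sT))$-dimensional transverse direction along which every stratum of every $\hat\Lambda^j$ splits as a product; after a contactomorphism of $J^1\R^m$ that respects this splitting, the problem reduces to $J^1\R^{n(\sT)}$. Next, I would normalize each branch individually: by Lemma~\ref{lem: tilt arb}, applied along the contactified page containing each $\hat\Lambda^j$, one can replace $\hat\Lambda^j$ by a piece of the standard extended model $\hat L_{\sT_j^+}$, where $\sT_j^+ = \sT_j \cup \{\rho\}$ denotes the subtree with $\rho$ readjoined. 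At this stage each front $\pi(\hat\Lambda^j)$ has the standard germ $H_{\sT_j^+}$, meeting the root hyperplane $\{x_0 = 0\}$ correctly.

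The heart of the proof, and the main obstacle, is the simultaneous normalization: one must arrange all branches $\hat\Lambda^j$ into their standard positions by a \emph{single} contactomorphism of $J^1\R^n$ that fixes the root piece $\R^n$. The transversality hypothesis (ii) is the key input. It guarantees that the fronts $\pi(\hat\Lambda^j)$ meet the root hypersurface $\{x_0 = 0\}$ transversally and are mutually self-transverse along it. This allows one to construct a diffeomorphism of $\R \times \R^n$, preserving the root hypersurface and all coorientations, which simultaneously straightens every branch into the standard arrangement defining $H_\sT$. Because each branch is graphical over its base quadrant in $\R^n$, this base diffeomorphism lifts uniquely to a contactomorphism of $J^1\R^n$ bringing all $\hat\Lambda^j$ to standard position at once. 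Lemma~\ref{lem: ext comp tree} plays a central rigidity role here: it ensures that once the combinatorial arrangement of strata is fixed, the only way the Legendrian pieces can extend is via the standard extended model, so no residual freedom remains after the base straightening.

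Once the simultaneous normalization is achieved, the union $\R^n \cup C(\Lambda)$ equals the standard extended arboreal Lagrangian $L_\sT$ by construction, and equivalently $\pi(\Lambda) = H_\sT$. Taking the product with the previously split-off $\R^{m - n(\sT)}$ factor yields the statement in the original dimension and closes the induction. I expect the main technical work to be in verifying that the base straightening in the third paragraph can be performed so that the resulting contact lift fixes the zero section and preserves each $\hat\Lambda^j$ up to the standard model; the remaining pieces are essentially bookkeeping plus repeated application of the extension lemmas already proved in Section~\ref{sec: arboreal}.
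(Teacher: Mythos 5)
Your overall frame (induction on the number of vertices, with a base case of a single graphical hypersurface) matches the paper, but the step that carries essentially all of the difficulty is asserted rather than proved. In your third paragraph you claim that hypothesis (ii) makes the fronts $\pi(\wh\Lambda^j)$ ``mutually self-transverse'' along the root hypersurface and that this ``allows one to construct a diffeomorphism \dots which simultaneously straightens every branch.'' Transversality only controls the relative position of pieces coming from \emph{different} branches $\wh\Lambda^{j_1},\wh\Lambda^{j_2}$ and of each branch relative to the zero section; the genuine obstruction is \emph{internal} to a single branch. The smooth pieces of the front of one branch are pairwise \emph{tangent} along the quadratic tangency loci described in Lemma~\ref{lem: tang}, and an arbitrary Weinstein hypersurface embedding $\phi_j$ deforms these tangencies. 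Already for $k=1$ with $\sT_1$ a linear chain, straightening the front is exactly the content of Theorem~\ref{thm: quad unique}, whose proof is the technical heart of the paper: one normalizes the primary tangencies one index at a time, using Proposition~\ref{prop:divisibility} to put the deformed graph in the form $x_0=(1+\beta\prod_j h_{n,j}^2)h_n^2$ and Proposition~\ref{prop:vf} to flow it to the standard graph by a vector field $h_t\sum_i 2^{-i}x_i\partial_{x_i}$ that is divisible by $\prod_j h_{n,j}$, hence preserves the already-normalized pieces. Nothing in your argument supplies or replaces this mechanism. Likewise, the claim that Lemma~\ref{lem: tilt arb} lets you ``replace $\wh\Lambda^j$ by a piece of the standard extended model'' begs the question: that lemma identifies the ideal boundary of the \emph{standard} model as a standard model of smaller type; it does not normalize an arbitrary Weinstein-embedded copy. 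And Lemma~\ref{lem: ext comp tree} is not a rigidity engine for the normalization; in the paper it is used only afterwards, to pass from the unsigned Proposition~\ref{prop:unique-unsigned} to the signed Theorem~\ref{thm:unique-signed}.

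For comparison, the paper's induction does not proceed branch-by-branch at the root: it prunes a single \emph{leaf} $\beta\in\ell(\sT)$, normalizes $H'=H\setminus\mathring H[\beta]$ to $H_{\sT'}\times\R^{m-n+1}$ by the inductive hypothesis, and then normalizes the one remaining smooth piece $H[\beta]$ by viewing it as the last piece of the linear $(\sA_\beta,m)$-front $K$ and applying Theorem~\ref{thm: quad unique} parametrically in the complementary directions, so that the already-normalized part is preserved. To repair your proposal you would need, at minimum, to prove the linear-chain normalization theorem (or an equivalent), since the ``simultaneous straightening'' you invoke cannot be obtained from transversality alone.
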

    
    \begin{proof}[Proof of Theorem \ref{thm:unique-signed} using Proposition \ref{prop:unique-unsigned}]
    Consider the arboreal Legendrian  as a closed subcomplex of the extended model. Apply 
    Proposition \ref{prop:unique-unsigned} to assume the extended front is in canonical form. Then
    Lemma~\ref{lem: ext comp tree} implies the front of the original arboreal Legendrian is a canonical model.
    \end{proof}
    
    Proposition \ref{prop:unique-unsigned} will be proven below in this section (see Section \ref{sec:proof-cone-arb}
) below, but first we discuss some corollaries of Theorem \ref{thm:unique-signed}.
 
\begin{cor}\label{cor:cone-over-arboreal}
Let $ \Lambda\subset \p_\infty  T^*M$ be an arboreal Legendrian. Suppose that the front  projection $\pi:\Lambda\to M$ is a transverse immersion.
Then $L:=C(\Lambda)\cup M$ is an arboreal Lagrangian. \end{cor}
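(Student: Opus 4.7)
The plan is to verify the arboreal Lagrangian condition at each germ of $L = C(\Lambda) \cup M$, passing to a Darboux chart $T^*M \cong T^*\R^m$, $m = \dim M$, centered at the chosen point $x \in L$. Three cases arise. If $x \in M \setminus \pi(\Lambda)$, then locally $L$ coincides with the zero section and is trivially arboreal. If $x \in C(\Lambda) \setminus M$, then the Liouville flow identifies a neighborhood of $x$ inside $C(\Lambda)$ with the product of $\R$ and the germ of $\Lambda$ at some $z \in \Lambda$, and since $\Lambda$ is arboreal by hypothesis the stabilization axiom supplies the desired arboreal model. The remaining and essential case $x \in M \cap \pi(\Lambda)$ will be handled by a direct application of Theorem~\ref{thm:unique-signed}.

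In that case, let $\{z_1, \ldots, z_k\} = \pi^{-1}(x) \cap \Lambda$ be the finite set of sheets of $\Lambda$ above $x$, and let $\cT_j$ denote the signed rooted tree governing the arboreal germ of $\Lambda$ at $z_j$, with root $\rho_j$. I would first produce germs of Weinstein hypersurface embeddings $\phi_j$ identifying a neighborhood of $z_j$ in $\partial_\infty T^*M$ with the canonical contact model used in Theorem~\ref{thm:unique-signed}, so that the germ of $\Lambda$ at $z_j$ is presented as $\phi_j(L_{\cT_j} \times \R^{m - n(\cT_j)})$. I would then assemble a single signed rooted tree $\cT$ by adjoining a new root $\rho$ to each $\rho_j$ by an edge, prescribing the sign of $[\rho \rho_j]$ to be $\eps(\nu_0, \Lambda^j, \rho_j)$ in the sense of \eqref{eq:arb-sign}, with respect to the vertical polarization $\nu_0$ of $T^*M$. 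With $\cT$ and the $\phi_j$ in place, the three hypotheses of Theorem~\ref{thm:unique-signed} follow immediately: (i) because each $z_j$ lies above $x$; (ii) because it is precisely the transverse immersion hypothesis on $\pi|_\Lambda$; and (iii) by the very definition of the new edge signs.

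The main obstacle is the first step, realizing each abstract arboreal contactomorphism at $z_j$ as a bona fide contact chart of $\partial_\infty T^*M$ in the standard form required by the theorem: by definition of arboreal Legendrian, the germ of $\Lambda$ at $z_j$ is only abstractly contactomorphic to the canonical model, whereas the theorem needs the contactomorphism to be implemented by a chart of the ambient contact manifold. The transverse immersion hypothesis on $\pi|_\Lambda$ is what makes this possible: it forces each top-dimensional stratum of $\Lambda$ at $z_j$ to be transverse to the cotangent fibers, and a contact Darboux--Moser argument then aligns the local coordinates on $\partial_\infty T^*M$ with the vertical polarization, producing the required $\phi_j$. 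Once this is done, Theorem~\ref{thm:unique-signed} yields that $\R^m \cup C(\Lambda)$ is arboreal of type $(\cT, m)$, and performing this construction uniformly around each base point of $M \cap \pi(\Lambda)$ completes the verification.
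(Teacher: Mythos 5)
Your overall strategy---reduce to Theorem~\ref{thm:unique-signed} at each point of the front $H = M \cap \ol{C(\Lambda)}$---is exactly the paper's proof, and your two additional cases (points of $M$ off the front, points of $C(\Lambda)$ off $M$) are the trivial ones the paper leaves implicit. However, the sign bookkeeping in your construction of $\cT$ is wrong in a way that leaves hypothesis (iii) of the theorem unverified. In the paper's conventions, edges adjacent to the root carry no sign, so the new edges $[\rho\rho_j]$ should remain unsigned; moreover the quantity $\eps(\nu_0,\Lambda^j,\rho_j)$ you propose is not defined, since \eqref{eq:arb-sign} assigns a sign only to vertices $\alpha$ \emph{adjacent to} the root $\rho_j$ of $\cT_j$, not to $\rho_j$ itself. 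What must be done instead is to decorate the previously unsigned edges $[\rho_j\alpha]$ of each $\cT_j$ (unsigned because they were root-adjacent in $\cT_j$, but no longer root-adjacent in $\cT$) with the signs $\eps(\nu,\Lambda^j,\alpha)$; hypothesis (iii) then holds by construction. As written, your proposal assigns signs to edges the theorem ignores and assigns nothing to the edges the theorem constrains.

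Separately, the ``main obstacle'' you identify is not actually an obstacle, and the Darboux--Moser step you propose is unnecessary. The definition of arboreal Legendrian already furnishes an \emph{ambient} contactomorphism of germs of pairs, identifying $(S^*M,\Lambda)$ near $z_j$ with $(J^1(\R^{n_j}\times\R^{m-n_j}), \Lambda_{\cT_j}\times\R^{m-n_j})$. Since $\Lambda_{\cT_j}\times\R^{m-n_j} = \{0\}\times(L_{\cT_j}\times\R^{m-n_j})$ lies in the standard Weinstein hypersurface $\{0\}\times T^*\R^{m}\subset J^1\R^{m}$, composing the inverse of that contactomorphism with this inclusion produces the required $\phi_j$ directly---this is what the paper means by the germs being ``images of arboreal Lagrangian models under Weinstein embeddings of their symplectic neighborhoods.'' Theorem~\ref{thm:unique-signed} does not require the $\phi_j$ to be aligned with the vertical polarization or to respect the front projection; transversality of $\pi|_\Lambda$ enters only through the separate hypothesis (ii), which is the corollary's hypothesis verbatim.
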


    \begin{figure}[h]
\includegraphics[scale=0.65]{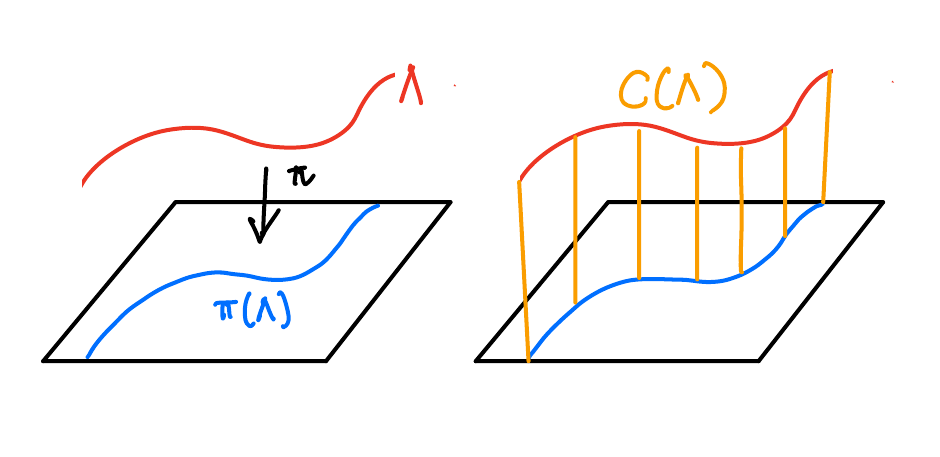}
\caption{In particular, the zero section union the Liouville cone on a regular Legendrian is arboreal with $A_2$ singularities along its front. }
\label{fig:transverseliouville}
\end{figure}

\begin{proof} The intersection $H:=M\cap \ol {C(\Lambda)}$ is the front of the Legendrian $\Lambda$. Each point $a\in H$  has finitely many  pre-images $z_1,\dots, z_k\in \Lambda$. The germs $\Lambda^j$ of $\Lambda$ at $z_j$ by  our assumption are images of arboreal Lagrangian models under Weinstein embeddings of their symplectic neighborhoods. Hence, by Theorem  \ref{thm:unique-signed} the germ of $L$ at $z$ is of arboreal type.
\end{proof}
       
  It is not a priori clear that  even  the standard Lagrangian  (resp. Legendrian) arboreal models are arboreal Lagrangians (resp. Legendrians).
  However, the following corollary shows that they are.
  \begin{cor}\label{cor:off-center-germ}
  Consider a model Lagrangian $L_\cT\subset  T^*\R^{n}, n=n(\cT)$.
  Then for any point $\lambda\in  L_\cT$ the germ of $L_\cT$ at $\lambda$ is a $(\cT',n)$-Lagrangian for a signed rooted tree $\cT'$.
  \end{cor}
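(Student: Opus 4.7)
The plan is to proceed by induction on the number of vertices $|v(T)|$, invoking the Stability Theorem~\ref{thm:unique-signed} to identify the local model at each point. The case $\lambda = 0$ is tautological: by Definition~\ref{def: arb model}, the germ of $L_\cT$ at the origin is the $(\cT, n)$-model itself. For $\lambda \neq 0$ we split into two cases depending on whether or not $\lambda$ lies on the root stratum $L_\rho = \R^n$.

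\textbf{Case A: $\lambda \in L_\rho \setminus \{0\}$.} Translate $\lambda$ to the origin, viewing $T^*\R^n$ as a cotangent bundle of $\R^n$ with new origin $\lambda$. Near $\lambda$, the Lagrangian $L_\cT$ still consists of the zero section together with a finite disjoint union of Liouville cones. By Lemma~\ref{lm: can models are arb}, these cones are the saturations of the components $\Lambda_j = L^\infty_{\cT_j^+} \times \R^{c_j}$, each of which is a stabilized arboreal Legendrian of type $\cT_j$. Near $\lambda$, only finitely many points $z_j^{(s)} \in \Lambda_j$ project to $\lambda$ under the front projection. By the inductive hypothesis (which yields the analogous statement for Legendrian models in one dimension lower, via Lemma~\ref{lem: tilt arb}), the germ of each $\Lambda_j$ at each such $z_j^{(s)}$ is itself a stabilized arboreal Legendrian of a smaller tree type. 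The front-transversality hypothesis of Theorem~\ref{thm:unique-signed} follows from the explicit description of $H_\cT$ as a transverse union of graphs of smooth functions over $\R^{n(\cT)}$, and the sign hypothesis follows from computing $\eps(\nu_0, L^\eps, \alpha)$ using the closed-form sign formula at the end of Section~\ref{sect:alt sign}. Applying Theorem~\ref{thm:unique-signed} then produces a signed rooted tree $\cT'$ (whose underlying poset is the subposet of $v(T)$ of vertices whose strata contain $\lambda$, regrouped with $\rho$ as root) such that the germ of $L_\cT$ at $\lambda$ is $(\cT', n)$-arboreal.

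\textbf{Case B: $\lambda \notin L_\rho$.} Then $\lambda$ lies in the interior of some Liouville cone $C(\Lambda_j)$ (away from its apex). The Liouville flow identifies the germ of $C(\Lambda_j)$ at $\lambda$ with the product of $\R$ (the cone direction) and the germ of the Legendrian $\Lambda_j$ at the corresponding point at infinity. By Lemma~\ref{lm: can models are arb} this is a stabilized arboreal Legendrian of type $\cT_j$; by the inductive hypothesis (applied in smaller dimension), its germ at the relevant point is arboreal of some type $(\cT'', m')$. Passing from a Legendrian germ to the corresponding Lagrangian germ costs one extra $\R$-stabilization, coming from the Liouville direction, and the overall germ is again arboreal.

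The main obstacle will be Case A: establishing both the front-transversality and the sign conditions of Theorem~\ref{thm:unique-signed} at the translated point $\lambda$. Transversality is a direct check from the equations $x_0 = \eps_0 h_i^2\circ\sigma_\eps$ in Section~\ref{s: quad fronts} (since the linear parts of the defining functions are triangular, they remain independent of each other at any base point). The sign verification requires tracking how the signs $\eps_i$, which measure the sides on which the hypersurfaces ${}^n\Gamma_{i+1}$ sit relative to ${}^n\Gamma_i$, transform under translation of the base point from $0$ to $\lambda$; this is the content of the corollary at the end of Section~\ref{sect:alt sign}. Once these two hypotheses are verified, the stability theorem does the rest.
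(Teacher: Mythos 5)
Your proposal is correct and follows essentially the same route as the paper: induct on the size of the tree, present $L_\cT$ as $L_\rho \cup C(\Lambda)$ with $\Lambda$ a union of lower-dimensional model Legendrians via Lemma~\ref{lm: can models are arb}, conclude by induction that $\Lambda$ is an arboreal Legendrian, and invoke the Stability Theorem at points of the front. The paper simply packages your Cases A and B into a single appeal to Corollary~\ref{cor:cone-over-arboreal} (itself a direct consequence of Theorem~\ref{thm:unique-signed}), and it does not need your sign verification, since the corollary only asserts existence of \emph{some} signed rooted tree $\cT'$ — the signs are read off from the configuration rather than checked against a prescribed decoration.
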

  \begin{proof}
  We argue by induction in $n$. The base of the induction is trivial. Assuming the claim for $n-1$ we 
   recall that $ L_\cT$ can be presented as $L_\rho\cup C(\Lambda) $, where $L_\rho$ is the smooth piece corresponding to the root $\rho$ of $\cT$  and  $\Lambda$  is a union of  model Legendrians of dimension $n-1$ in $\p_\infty T^*(\R^n)$.  By the  induction hypothesis $\Lambda$ is an arboreal Legendrian,  and hence  applying Corollary \ref{cor:cone-over-arboreal}  we  conclude  that  $L_\cT$ is an arboreal Lagrangian.
  \end{proof}
  
  \begin{remark}
  We will not need it in what follows, so only briefly comment here that it is possible to specify precisely the type $(\cT',n)$ of the germ of $L_\cT$ at each point $\lambda\in L_\cT$. Following~\cite{N13} the  underlying tree $T'$ is a canonically defined subquotient of $T$, in other words, a diagram $T' \leftarrow S \to T$, where $S\to T$ is a full subtree, and $S\to T'$ contracts some edges; conversely, any such subquotient can occur. Furthermore, if we partially order $T$ with the root $\rho \in T$ as minimum, then the root $\rho'\in T'$ is the unique minimum of the natural induced partial order on $T'$. Finally, to equip  $T'$ with signs, we restrict the signs of $T$ to the subtree $S$, then push them forward to $T'$ using that each edge of $T'$ is the image of a unique edge of $S$.   
  \end{remark}
  
  \begin{cor}\label{cor:2-polar}
  Let $L_\cT\subset T^*\R^n$ be a model Lagrangian associated with a signed rooted tree $(T,\rho,\eps)$. Let $\eta_0,\eta_1$ be two  polarizations transverse to $L_\cT$.
  Suppose that for any vertex $\alpha$ of $T$ adjacent to $\rho$
  we have $$\eps(\eta_0,L,\alpha)=\eps(\eta_1,L,\alpha).$$  
    Then there is a (germ at the origin of)   a symplectomorphism 
  $\psi:T^*\R^n\to T^*\R^n$ such that $\psi(L)=L$ and $d\psi(\eta_0)=\eta_1$ along $L$. 
   \end{cor}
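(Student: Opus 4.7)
The plan is to connect $\eta_0$ and $\eta_1$ by a smooth path $\{\eta_t\}_{t \in [0, 1]}$ of polarizations transverse to $L_\cT$ such that the signs at root-adjacent vertices remain constant, and then to integrate this path to a family of symplectomorphisms preserving $L_\cT$. Setting $\psi := \psi_1$ will then yield the required symplectomorphism.

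For the first step, I would observe that at the origin the space of Lagrangian subspaces transverse to $T_0 L_\rho = \R^n$ is an affine space modeled on $\mathrm{Sym}^2(T_0 L_\rho)^*$. The additional transversality requirement to each $T_0 L_{\rho_j}$ is an open (but not generally convex) condition, so the space of polarizations transverse to all root-adjacent smooth pieces decomposes into finitely many open connected components. The signs $\bigl( \eps(\eta, L, \rho_j)\bigr)_{j=1}^k$ are locally constant functions of $\eta$ that separate these components, so the hypothesis $\eps(\eta_0, L, \rho_j) = \eps(\eta_1, L, \rho_j)$ ensures $\eta_0(0)$ and $\eta_1(0)$ lie in the same component and may be joined there by a smooth path. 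This pointwise path at the origin extends, by a partition-of-unity argument using that transversality to $L$ is open away from $0$, to a smooth family of polarizations on a neighborhood of $0$ in $L_\cT$.

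Next, I would integrate the family $\eta_t$ by a Moser-type argument. Writing $X_t = \tfrac{d}{dt}\psi_t \circ \psi_t^{-1}$, at each time $t$ we must produce a Hamiltonian $H_t$ such that $X_{H_t}$ is tangent to every smooth piece of $L_\cT$ and its linearization along $L_\cT$ realizes the infinitesimal rotation $\tfrac{d\eta_t}{dt}$. Tangency to a smooth piece $L_\alpha$ is equivalent to $dH_t|_{T L_\alpha} = 0$ on $L_\alpha$, while the rotation of the polarization is controlled by the transverse Hessian of $H_t$ along $L_\cT$.

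The main obstacle is producing such an $H_t$ satisfying the simultaneous tangency conditions to all smooth pieces of $L_\cT$ at once. Near the origin the pieces $L_\rho, L_{\rho_1}, \ldots, L_{\rho_k}$ meet cleanly along codimension-one loci, and I would expect the analysis to split into independent blocks indexed by the edges adjacent to $\rho$, each block reducing to a two-dimensional linear-algebra computation in the symplectic reduction by $C_{\rho_j} = \Span(L_\rho, L_{\rho_j})$. The sign hypothesis says precisely that, in each reduced plane, $[\eta_0]^{C_{\rho_j}}$ and $[\eta_1]^{C_{\rho_j}}$ occupy the same open sector with respect to $[L_\rho]^{C_{\rho_j}}$ and $[L_{\rho_j}]^{C_{\rho_j}}$, so the desired infinitesimal rotation stays in the admissible region and no obstruction arises. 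I would conclude by induction on the depth of $\cT$, using the recursive structure of $L_\cT$ provided by Lemma~\ref{lm: can models are arb} to reduce the deeper tangency conditions to the root-adjacent case just treated.
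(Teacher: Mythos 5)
Your route is genuinely different from the paper's, and it contains a real gap at its central step. The first half (connecting $\eta_0$ to $\eta_1$ through transverse polarizations with constant signs) is essentially fine --- in fact it is better than you claim: in the affine chart of Lagrangian planes transverse to $T_0L_\rho$, the condition that the sign at a root-adjacent vertex $\rho_j$ be fixed is a \emph{linear} (half-space) condition on the quadratic form, so the locus with prescribed signs is convex, not merely connected; and the tangent planes at $0$ of the deeper pieces coincide with those of their root-adjacent ancestors, so they impose no new conditions at the origin. The gap is in the Moser step. You correctly identify that you need, for each $t$, a Hamiltonian $H_t$ vanishing to first order on \emph{every} smooth piece of $L_\cT$ and with prescribed transverse Hessian along all of $L_\cT$ (to realize $\tfrac{d}{dt}\eta_t$ exactly, as required by the conclusion $d\psi(\eta_0)=\eta_1$). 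Producing such an $H_t$ is a singular extension/division problem: near the strata where several conormal pieces meet tangentially, the tangency and Hessian constraints interact, and showing they are simultaneously solvable by a smooth function is precisely the hard content of this paper --- it is what Propositions~\ref{prop:divisibility} and~\ref{prop:vf} accomplish (via divisibility of the generating data by the products $\prod_j h_{n,j}$ and the explicit weighted Euler vector fields). Your appeal to ``independent blocks'' and a two-dimensional reduction only treats one point and one pair of pieces at a time, and the proposed induction via Lemma~\ref{lm: can models are arb} concerns the Legendrian link at infinity; transporting the construction of $H_t$ between $T^*\R^n$ and the link is not addressed. So the proposal, as written, assumes the theorem-strength input rather than proving it.

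For comparison, the paper's proof is a short reduction to the already-established Stability Theorem~\ref{thm:unique-signed}: choose Weinstein hypersurface embeddings $h_0,h_1:T^*\R^n\to J^1\R^n$ carrying $\eta_0,\eta_1$ to the standard vertical polarization $\nu_0$, form the arboreal Lagrangians $\ol L_j=C(h_j(L_\cT))\cup(\R^n\times\R)$, and observe that the sign hypothesis $\eps(\eta_0,L,\alpha)=\eps(\eta_1,L,\alpha)$ says exactly that $\ol L_0$ and $\ol L_1$ have the same signed rooted tree $\ol\cT$ (the tree $\cT$ with a new root attached). Theorem~\ref{thm:unique-signed} then identifies them by an ambient symplectomorphism respecting the front fibration, which conjugates back to the desired $\psi$. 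All the Moser-type analysis you would need is thereby quoted rather than redone. If you want to salvage your direct approach, the missing ingredient is a proof that the system ``$H_t$ vanishes to first order on each $L_\alpha$, with prescribed transverse Hessian'' is solvable on the singular set $L_\cT$; expect this to require the same divisibility analysis as Proposition~\ref{prop:divisibility}.
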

   \begin{proof} There exist embeddings $h_0,h_1:T^*\R^n\to J^1\R^n$ as Weinstein hypersurfaces, such that $h_j(\eta_j)=\nu_0$, $j=0,1$, where $\nu_0$ is the canonical Legendrian foliation of $J^1\R^n$ by fibers of the front projection to $\R^n\times\R$.
   Consider the arboreal Lagrangians $\ol L_j:=C(h_j(L_\cT))\cup(\R^n\times\R)$, $j=0,1$, and note that their arboreal types are described by the same signed rooted tree $\ol\cT$ obtained from $\cT$ by adding a new root, connecting it by an edge to the old one, and assigning to edges $[\rho\alpha]$ of $\cT\subset\ol\cT$ adjacent to the old root $\rho$ the sign $\eps(\eta_0,L,\alpha)=\eps(\eta_1,L,\alpha)$.
   Applying Theorem \ref{thm:unique-signed} we find the required symplectomorphism $\psi$.
   \end{proof}

  \begin{cor}
\label{cor:reducing-arboreal}
Let $H\subset M$ be an arboreal front. Then for any  submanifold $\Sigma\subset M$ transverse to (all strata of) $H$ the intersection $\Sigma\cap H$ is an arboreal front in $\Sigma$.
 \end{cor}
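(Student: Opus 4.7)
The claim is local at any point $p\in\Sigma\cap H$, so I would choose coordinates near $p$ identifying $(M,H)$ with the arboreal model $(\R^{n+1}\times\R^{m-n},H_\cT\times\R^{m-n})$; the transversality hypothesis transports, so it suffices to treat a germ of submanifold $\Sigma$ in the model that is transverse to every stratum $H_\alpha\times\R^{m-n}$ at $0$. I would then argue by induction on $\codim_M\Sigma$, the base case $\codim=0$ being vacuous. Since stratified transversality is an open condition, $\Sigma$ is locally the transverse intersection $\Sigma_1\cap\cdots\cap\Sigma_k$ of hypersurfaces chosen so that each partial intersection remains transverse to every stratum of $H$. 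Treating one hypersurface restriction at a time reduces the problem to the case $\codim_M\Sigma=1$.

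For the core codimension-one case, I would invoke the stability result Theorem~\ref{thm:unique-signed}. The conormal Lagrangian $L_H:=\R^{\dim M}\cup T^+_{\R^{\dim M}}H\subset T^*M$ is an arboreal Lagrangian whose front projection is $H$, and by Lemma~\ref{lm: can models are arb} its ideal Legendrian boundary $L_H^\infty\subset S^*M$ decomposes near $p$ as a disjoint union $\bigsqcup_j\Lambda^j$ of stabilized model arboreal Legendrians $\Lambda^j=L^\infty_{\cT_j^+}\times\R^{c_j}$. I would then perform symplectic reduction along the coisotropic $T^*M|_\Sigma$, obtaining a natural projection to $T^*\Sigma$; the reduced pieces $\overline\Lambda^j\subset S^*\Sigma$ remain stabilized model arboreal Legendrians associated to sub-/quotient trees of the $\cT_j^+$, combinatorially determined by how $\Sigma$ meets the strata of $H_\cT$ near $p$. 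Theorem~\ref{thm:unique-signed} then certifies that $\R^{\dim\Sigma}\cup C\bigl(\bigsqcup_j\overline\Lambda^j\bigr)\subset T^*\Sigma$ is an arboreal Lagrangian for an assembled signed rooted tree $\cT'$; since its front projection is precisely $\Sigma\cap H$, the front is arboreal as required.

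The main obstacle is verifying the three hypotheses of Theorem~\ref{thm:unique-signed} for the reduced data. Hypothesis~(i), that the base points $\overline z_j$ share a common front projection, is immediate because the original $z_j$ all lie over $p\in M$ and hence over $p\in\Sigma$. Hypothesis~(ii), self-transversality of the front projection, is precisely the translation into the contact framework of the stratified transversality of $\Sigma$ with $H$, together with a generic choice of reducing hypersurface (which can be arranged by a further small perturbation within the open set of $\Sigma$ transverse to all strata). Hypothesis~(iii), the sign compatibility $\eps(\nu,\overline\Lambda^j,\alpha)$, reduces to the elementary symplectic-linear-algebra fact that the $\prec$-ordering of three pairwise transverse linear Lagrangians in a symplectic vector space is preserved under any further symplectic reduction by a coisotropic subspace meeting each of them transversely; the transversality of $\Sigma$ with the smooth pieces $L_\rho,L_\alpha$ at $p$, together with the polarization $\nu$, supplies exactly such a coisotropic. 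Once these verifications are in place, the arboreal type $(\cT',\dim\Sigma-1)$ of $\Sigma\cap H$ at $p$ is the one recorded by the combinatorial incidence data, completing the induction.
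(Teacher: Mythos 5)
There is a genuine gap at the heart of your codimension-one step. You assert that after symplectic reduction along $T^*M|_\Sigma$ the pieces $\overline\Lambda^j\subset S^*\Sigma$ ``remain stabilized model arboreal Legendrians associated to sub-/quotient trees of the $\cT_j^+$,'' but this is exactly the kind of statement the corollary is asking you to prove (arboreality is preserved under restriction to a transverse slice), and you give no argument for it and no induction (say, on the number of vertices of $\cT$) that would break the circularity. A second, independent problem is the appeal to ``a further small perturbation'' of $\Sigma$ to arrange hypothesis~(ii) of Theorem~\ref{thm:unique-signed}: the corollary is about the given $\Sigma$, and perturbing it changes the germ $\Sigma\cap H$ whose arboreality you are supposed to establish; you would at least owe an isotopy-extension argument preserving $H$ to transfer the conclusion back.

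You are also missing the observation that makes the statement almost immediate, which is how the paper proves it. In the model $(\R^{n(\cT)+1}\times\R^{k},\,H_\cT\times\R^{k})$ with $k=m-n(\cT)$, transversality of $\Sigma$ to \emph{all closed strata} --- in particular to the deepest stratum through the central point, which is $\{0\}\times\R^{k}$ --- forces $\codim\Sigma\leq k$ and forces the projection $p\colon\Sigma\to\R^{n(\cT)+1}$ onto the first factor to be a submersion, hence (at the level of germs) a trivial fibration. Then $\Sigma\cap H=p|_\Sigma^{-1}(H_\cT)$ is just the restriction of that fibration to $H_\cT$, i.e.\ diffeomorphic to $H_\cT\times\R^{k-\codim\Sigma}$. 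In particular the signed rooted tree does not change at all --- only the stabilization parameter drops --- whereas your ``assembled tree $\cT'$'' and ``sub-/quotient trees'' suggest you expected the combinatorics to vary. No symplectic reduction, no stability theorem, and no induction on codimension are needed.
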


\begin{proof}
We can assume that $H$ is an arboreal front germ at a point $x\in H$, and hence the germ of $(M, H)$ at $x$  is diffeomorphic to the germ of $(\R^{n(\cT)+1}\times\R^k, H_\cT\times\R^k)$ for some rooted signed arboreal tree $\cT$ and $k=n-n(\cT)$. Note that  the transversality of $\Sigma$ to $H$ implies that $\codim \Sigma\leq k$ and that the projection of $p:\Sigma\subset \R^{n(\cT)+1}\times\R^k \to \R^{n(\cT)+1}$  to the first factor is a submersion, and because we are dealing with germs, it  is a trivial fibration. On the other hand, the projection $p|_{\Sigma\cap H}:\Sigma\cap H\to H_\cT$ is  the restriction of this fibration to $H_\cT\subset\R^{N(\cT)}$.
\end{proof}

          \begin{figure}[h]
\includegraphics[scale=0.55]{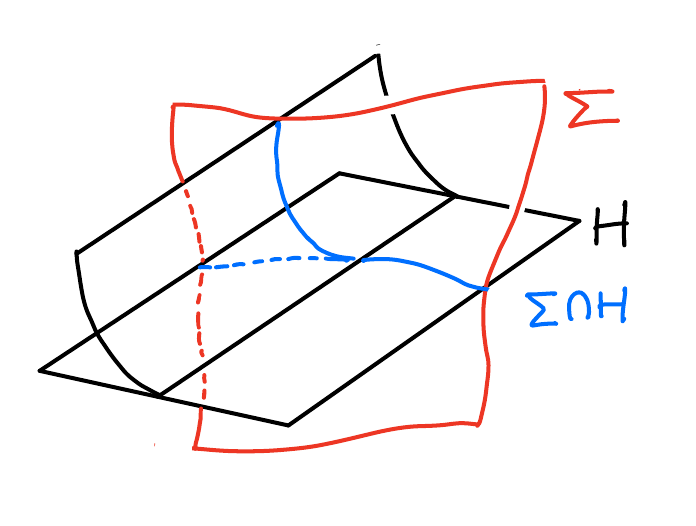}
\caption{Illustration that $\Sigma \cap H$ is an arboreal front in $\Sigma$.}
\label{fig:intersection}
\end{figure}

\subsection{Parametric version}

The following is the parametric version of Theorem \ref{thm:unique-signed}.

 \begin{theorem}\label{thm:unique-signed-parametric}
 Let $\cT$ be a signed rooted tree. Let $\rho_1,\dots,\rho_k$ be vertices adjacent to the root $\rho$ and $\cT_j$ be subtrees  with roots $\rho_j$ (where we removed the decoration of edges $[\rho_j\alpha]$). Let $\phi^y_j:
  \ssT^*\R^m\to J^1\R^m$, $m\geq n=n(\sT)$, be families of germs of Weinstein hypersurface embeddings with disjoint images, parametrized by a manifold $Y$.   Denote $z^y_j:=\phi^y_j(0)$, $\Lambda_y^j=\phi^y_j(L_{\cT_j}\times\R^{m-n(\cT_j)})$,  $j=1,\dots, k$.  
  Suppose that 
 \begin{itemize}
 \item[(i)] $\pi(z^y_j)=0$;
 \item[(ii)]  the arboreal Legendrian $\Lambda_y:=\bigcup_{j=1}^k\Lambda_y^j$ projects transversely under the front projection $J^1\R^n\to\R\times \R^n$;
 \item[(iii)] for each edge $[\rho_j\alpha]$  we have $\eps(\nu,\Lambda^j_y,\alpha)=\eps_{[\rho_j\alpha]}$.
 \end{itemize}
 Then there exists a family of diffeomorphisms $\phi_y$ between $H_\cT\times \R^{m-n(\cT)}$ and the front $\pi(\Lambda_y)$. If $K \subset Y$ is a closed subset and the $\phi_j^y$ are the standard embeddings of the local model for $y \in \Op(K)$, then we may further assume $\phi_y=\text{Id}$ for $y \in \Op(K)$. \end{theorem}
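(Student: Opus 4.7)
The plan is to derive Theorem \ref{thm:unique-signed-parametric} from a parametric refinement of Proposition \ref{prop:unique-unsigned}. Specifically, I would first establish that under hypotheses (i) and (ii) (interpreted for the extended models), there is a family of diffeomorphisms between $H_\sT \times \R^{m-n(\sT)}$ and the front of the extended arboreal Legendrian, equal to the identity for $y \in \Op(K)$. Given such a parametric unsigned statement, the signed parametric conclusion follows by applying Lemma \ref{lem: ext comp tree} fiberwise: the restriction of the extended front to the canonical subfront depends smoothly on $y$ because the identification of sub-fronts is canonical, and it is manifestly identity-preserving over $K$.

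For the parametric unsigned refinement, I would repeat each step of the proof of Proposition \ref{prop:unique-unsigned} in families, upgrading each absolute geometric tool to its parametric, relative-to-$K$ counterpart. The key ingredients are: (a) a parametric relative Weinstein neighborhood theorem to normalize the family of Weinstein hypersurface embeddings $\phi^y_j$ near $z^y_j$; (b) a parametric contact isotopy extension to move the family of base points $z^y_j$ (and their Legendrian tangent data) into a fixed standard configuration; and (c) a parametric Moser argument to straighten the resulting fronts onto $H_\sT \times \R^{m-n(\sT)}$. Each of these reduces to integrating a time-dependent vector field that depends smoothly on $y$ and, by construction, vanishes identically for $y \in \Op(K)$ whenever the input data already agrees with the standard model there. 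Smoothness in $y$ and the relative conclusion both follow from standard ODE theory with parameters.

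The main obstacle is organizational: ensuring the inductive procedure on the tree is compatible both with the stratification of the arboreal front and with the relative constraint rel $K$. At each inductive step one normalizes the smooth piece corresponding to a vertex adjacent to the root while leaving pieces normalized at earlier stages undisturbed; this forces the correcting vector fields to vanish on closed subsets of the source that may vary with $y$. These constraints can be accommodated by choosing families of cut-off functions depending smoothly on $y$, together with the observation that the transversality hypothesis (ii) is an open condition preserved by the small isotopies generated at each step, so transversality persists throughout the induction. I expect no substantively new ideas beyond those of the non-parametric proof; the work is in carefully tracking smoothness of all auxiliary data in the parameter $y$, and in verifying at each step that the correction terms vanish identically in a neighborhood of $K$.
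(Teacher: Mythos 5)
Your proposal matches the paper's proof: the paper establishes Theorem \ref{thm:unique-signed-parametric} precisely by observing that every ingredient in the proof of Proposition \ref{prop:unique-unsigned} — the base cases, the divisibility and vector-field propositions, and the inductive normalization of primary tangencies — is formulated and proved in parametric form rel a closed subset $K$ (with all corrections obtained by integrating explicit time-dependent vector fields depending smoothly on $y$ and vanishing where the data is already standard), and the signed statement is then recovered from the unsigned extended one via Lemma \ref{lem: ext comp tree}, exactly as you outline. One small correction: the induction on the tree peels off a \emph{leaf} vertex and normalizes its smooth piece last, treating the complementary directions as parameters, rather than a vertex adjacent to the root, but this does not affect your overall strategy.
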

 
 The parametric version of Proposition \ref{prop:unique-unsigned} is formulated similarly. As a consequence of Theorem \ref{thm:unique-signed-parametric} we get the following result:

\begin{cor}\label{cor: contr auts}
Fix a signed rooted tree $\cT = (T, \rho, \eps)$, set $n = |n(\cT)|$ and consider  the arboreal $\cT$-front  $H_\cT \subset \R^n$. Let $D(\R^n, H_\cT)$ be the group of germs at $0$  of diffeomorphisms of $\R^n$ preserving $H_\cT$ as a front, i.e.~ as a subset along with its coorientation.

Then the fibers of the natural map $D(\R^n, H_\cT) \to \Aut(\cT)$ are weakly contractible.
\end{cor}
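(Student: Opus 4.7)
Plan:

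Since all nonempty fibers of $D(\R^n, H_\cT)\to\Aut(\cT)$ are homeomorphic via composition with a fixed lift of each tree automorphism (such lifts are easy to construct by hand, e.g.~permuting coordinates of $\R^{n(\cT)}$ according to a graph automorphism), it suffices to show weak contractibility of the identity fiber $F := D_{\mathrm{id}}(\R^n, H_\cT)$ consisting of germs preserving every smooth stratum of $H_\cT$ individually. Given a continuous family $\{f_y\}_{y\in S^k}$ in $F$, I will construct a continuous extension $D^{k+1}\to F$ providing a null-homotopy, by encoding the family as parametric arboreal data and invoking Theorem~\ref{thm:unique-signed-parametric}.

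The key construction uses cone coordinates $(r,z)\in[0,1]\times S^k$ on $D^{k+1}$. For each subtree $\cT_j$ adjacent to the root, form a parametric family of Weinstein embeddings $\phi_j^{(r,z)}$: take $\phi_j^{\mathrm{std}}$ near $r=0$, take $\hat f_z \circ \phi_j^{\mathrm{std}}$ near $r=1$ (where $\hat f_z$ denotes the contact lift of the base diffeomorphism $f_z$), and interpolate smoothly in $r$. Because $f_z$ preserves $H_\cT$ with its coorientation, $\hat f_z$ preserves $\Lambda_\cT$, so the resulting family of Legendrians $\Lambda_{(r,z)}=\bigcup_j \phi_j^{(r,z)}(L_{\cT_j}\times\R^{\ast})$ is identically equal to $\Lambda_\cT$; in particular the transversality and sign hypotheses of Theorem~\ref{thm:unique-signed-parametric} hold automatically. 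Applying that theorem with $Y=D^{k+1}$ and $K$ a closed neighborhood of the center produces a continuous family of germs of diffeomorphisms $\Phi_{(r,z)}$ of $(\R^n,0)$ taking $H_\cT$ to $\pi(\Lambda_{(r,z)})=H_\cT$, equal to $\mathrm{Id}$ near the center, and thus lying in $F$ throughout (the induced tree automorphism is locally constant on a connected parameter space and fixed at the center).

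The main obstacle is boundary matching: the family $\Phi_{(1,z)}$ output by parametric stability gives \emph{some} null-homotopy of an $S^k$-family in $F$, but a priori $\Phi_{(1,z)}\neq f_z$. I would close this gap by induction on $k$. The composition $z\mapsto f_z\circ\Phi_{(1,z)}^{-1}$ again lies in $F$, and by the inductive hypothesis (with base case $k=0$ provided by the non-parametric Stability Theorem~\ref{thm:unique-signed}, which implies path-connectedness of $F$) it admits a null-homotopy in $F$; concatenating with $\Phi$ yields the desired extension. A more direct alternative is to use the relative form of Theorem~\ref{thm:unique-signed-parametric} more aggressively: by arranging the parametric Weinstein data on a collar of $\partial D^{k+1}$ to already be in ``standard form with respect to the identification $f_z$'' — formally, by further precomposing $\phi_j^{(r,z)}$ with $\hat f_z^{-1}$ on an outer collar — the theorem's rectification is forced to recover $f$ exactly on $S^k$, bypassing the inductive matching step.
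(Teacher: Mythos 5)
Your high-level strategy coincides with the paper's (extend the sphere family over the disk as ambient data, then rectify with the relative parametric Stability Theorem), but two of the steps have genuine problems as written.

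The first is the interpolation. A family $\phi_j^{(r,z)}$ over $D^{k+1}$ equal to $\phi_j^{\mathrm{std}}$ at the center and to $\hat f_z\circ\phi_j^{\mathrm{std}}$ on the boundary is precisely a null-homotopy of $z\mapsto \hat f_z$ in the space of germs of diffeomorphisms of $(\R^n,0)$ (or of Weinstein embeddings); that space is homotopy equivalent to $\GL_n(\R)$ and is \emph{not} contractible, so ``interpolate smoothly in $r$'' is not available for free. The existence of the extension is exactly where the coorientation hypothesis does its work: since each $f_z$ preserves the canonical flag in $T_0\R^n$ together with its coorientations, the linearizations land in a contractible subgroup of $\GL_n(\R)$, and only for that reason does the family bound. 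The paper makes this observation explicitly; your proposal never uses it. Relatedly, the claim that $\Lambda_{(r,z)}=\Lambda_\cT$ for all $(r,z)$ is false for intermediate $r$: your justification (that $\hat f_z$ preserves $\Lambda_\cT$) covers only the two ends of the interpolation, and if the whole interpolation did preserve $\Lambda_\cT$ there would be nothing left to prove. Consequently $\Phi_{(r,z)}$ carries $H_\cT$ to $\pi(\Lambda_{(r,z)})\neq H_\cT$ and does \emph{not} lie in $F$; the element of $F$ must be the composite of the rectifying diffeomorphism with the base diffeomorphism of the data, as in the paper's $h_t\circ g_t$.

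The second is the boundary matching. The inductive fix is circular: $z\mapsto f_z\circ\Phi_{(1,z)}^{-1}$ is an $S^k$-family in $F$, and null-homotoping it requires $\pi_k(F)=0$, which is the statement being proved — the inductive hypothesis only controls $\pi_j(F)$ for $j<k$ (and path-connectedness of $F$ is itself the $k=0$ case of the parametric theorem, not a consequence of the non-parametric one). Your ``more direct alternative'' points toward the correct resolution but misfires: making the data literally standard on an outer collar forces the rectification to be the identity there, so the resulting boundary family is the constant family, not $f_z$. What actually works, and what the paper does, is to impose the relative condition on $K=S^k$ itself: for $t\in S^k$ the data $\hat f_t\circ\phi^{\mathrm{std}}$ already has standard front $H_\cT$ because $f_t$ preserves it, so the rectifying diffeomorphism $h_t$ is the identity there and the composite $h_t\circ g_t$ restricts to $f_t$ on the boundary, giving the desired extension of $f$ to $D^{k+1}$ inside $F$.
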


\begin{proof} 
We  deduce Corollary~ \ref{cor: contr auts} from Theorem \ref{thm:unique-signed-parametric}.
We will argue for  $\cT = \cA_{n+1}$ when $H_{\cA_{n+1}} = {}^{n-1} \Gamma$; the case of general $\cT$ is similar.

Since $ \Aut(\cA_{n+1})$ is trivial, we seek to show $D(\R^n, {}^{n-1} \Gamma)$ is weakly contractible. 
Note any $\varphi \in D(\R^n, {}^{n-1} \Gamma)$ preserves $0$, and moreover, preserves the canonical flag in $T_0 \R^n$ given by the tangents to the intersections $\bigcap_{i< i_0} {}^{n-1} \Gamma_i$.  

Let $D(\R^n)$ denote the group of germs at $0$ of diffeomorphisms of $\R^n$.
Consider a $k$-sphere of maps $f_t\in D(\R^n, {}^{n-1} \Gamma)$, $t\in S^k$. 
Since all $f_t$ preserve $0$ and the canonical flag in  $T_0 \R^n$, there exists a $k+1$-ball of diffeomorphisms $g_t\in D(\R^n)$, $t\in B^{k+1}$, extending $f_t$. Applying Theorem~\ref{thm:unique-signed-parametric} to the Weinstein hypersurface embeddings induced by $g_t$, we can find diffeomorphisms $h_t$ such that $h_t$  takes  $g_t( {}^{n-1} \Gamma)$ back to $ {}^{n-1} \Gamma$ and such that $h_t$ is the identity for $t\in S^k$. Then $h_t \circ g_t\in D(\R^n, {}^{n-1} \Gamma)$, $t\in B^{k+1}$, gives an extension of $f_t$ to the $k+1$-ball.
\end{proof}

We also formulate the parametric version of Corollary \ref{cor:2-polar}.

  \begin{cor}\label{cor:2-polar-parametric}
  Let $L_\cT\subset T^*\R^n$ be a model Lagrangian associated with a signed rooted tree $(T,\rho,\eps)$. Let $\eta^y_0,\eta^y_1$ be two families of polarizations transverse to $L_\cT$ parametrized by a manifold $Y$.
  Suppose that for any vertex $\alpha$ of $T$ adjacent to $\rho$
  we have $$\eps(\eta^y_0,L,\alpha)=\eps(\eta^y_1,L,\alpha).$$  
    Then there is a family of (germ at the origin of)  symplectomorphisms
  $\psi^y:T^*\R^n\to T^*\R^n$ such that $\psi^y(L)=L$ and $d\psi^y(\eta^y_0)=\eta^y_1$ along $L$. Moreover, if $\eta_0^y=\eta_1^y$ for $y \in \Op(K)$ for $K \subset Y$ a closed subset, then we can take $\psi^y=\text{Id}$ for $y \in \Op(K)$.
   \end{cor}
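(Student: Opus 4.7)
The plan is to parametrize the proof of Corollary \ref{cor:2-polar}, substituting Theorem \ref{thm:unique-signed-parametric} for Theorem \ref{thm:unique-signed} throughout.

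First, I would construct, smoothly in $y \in Y$, families of germs of Weinstein hypersurface embeddings $h_j^y : T^*\R^n \hookrightarrow J^1\R^n$, for $j=0,1$, such that $(h_j^y)_* \eta_j^y = \nu_0$, where $\nu_0$ is the canonical vertical Legendrian foliation of $J^1\R^n$ by fibers of the front projection. Such a family exists by a parametric Weinstein tubular neighborhood argument: transversality of $\eta_j^y$ to $L_\cT$ makes the pointwise normalization problem affine, so standard parametric Moser-type techniques produce smooth solutions over $Y$. In the relative situation where $\eta_0^y = \eta_1^y$ for $y \in \Op(K)$, one arranges $h_0^y = h_1^y$ there.

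Next, form the families of arboreal Lagrangians $\bar L_j^y := (\R^n \times \R) \cup C(h_j^y(L_\cT)) \subset T^* \R^{n+1}$, viewing $J^1\R^n$ as a contact hypersurface of $S^* \R^{n+1}$. The sign hypothesis $\eps(\eta_0^y, L_\cT, \alpha) = \eps(\eta_1^y, L_\cT, \alpha)$ for every vertex $\alpha$ adjacent to $\rho$ guarantees that both families have the same arboreal type, namely the signed rooted tree $\bar \cT$ obtained from $\cT$ by adjoining a new root $\bar\rho$ connected to $\rho$ and decorating the edges $[\rho\alpha]$ with these common signs. Applying Theorem \ref{thm:unique-signed-parametric} to each family, relative to $K$, yields smooth families $\Phi_j^y$ of germ symplectomorphisms of $T^*\R^{n+1}$ sending $\bar L_j^y$ to the standard model $L_{\bar \cT}$, with $\Phi_j^y = \Id$ on $\Op(K)$.

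The composition $(\Phi_1^y)^{-1} \circ \Phi_0^y$ is then a family of germ symplectomorphisms of $T^*\R^{n+1}$ sending $\bar L_0^y$ to $\bar L_1^y$ while preserving the root Lagrangian $\R^n \times \R$ and the Liouville cone decomposition. Reading off its induced action on the ideal contact boundary and restricting to the Weinstein hypersurfaces $h_j^y(T^*\R^n) \subset J^1\R^n$ yields, after conjugating by $h_j^y$, the desired family $\psi^y : T^*\R^n \to T^*\R^n$ with $\psi^y(L_\cT) = L_\cT$ and $d\psi^y(\eta_0^y) = \eta_1^y$ along $L_\cT$. The relative conclusion is automatic since every step preserves the identity over $\Op(K)$. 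The main obstacle I expect is the first step: producing the parametric Weinstein hypersurface embeddings $h_j^y$ smoothly in $y$ with the relative condition on $K$. Once this parametric Weinstein normalization is in hand, the remaining steps follow cleanly from Theorem \ref{thm:unique-signed-parametric} and the canonical nature of the Liouville cone construction.
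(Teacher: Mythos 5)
Your proposal is correct and follows essentially the same route as the paper: the paper's proof of Corollary \ref{cor:2-polar-parametric} is literally the proof of Corollary \ref{cor:2-polar} (Weinstein hypersurface embeddings $h_j$ normalizing $\eta_j$ to $\nu_0$, forming the cones $\ol L_j$ of type $\ol\cT$ with the new root, and invoking stability) with Theorem \ref{thm:unique-signed-parametric} substituted for Theorem \ref{thm:unique-signed}, exactly as you describe. The only point you flag as a potential obstacle --- the parametric, relative construction of the $h_j^y$ --- is indeed the one piece of input beyond the parametric stability theorem, and your affine/Moser justification for it is the standard one.
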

The proof is just like in the non-parametric case, but applying Theorem \ref{thm:unique-signed-parametric} instead of Theorem \ref{thm:unique-signed}.


\subsection{Tangency loci}

 Before proving Proposition \ref{prop:unique-unsigned} and its parametric analogue we need to analyze more closely the geometry of hypersurfaces forming arboreal fronts.
\begin{definition}
Given smooth hypersurfaces $X_1,X_2  \subset  \R^{n+1}$, we denote by $T(X_1, X_2) \subset \R^{n+1}$ their {\em tangency locus}, i.e. the subset of points $x\in X_1\cap X_2$ such that $T_x X_1 = T_x X_2$.
\end{definition}

\begin{remark}
Given smooth Legendrians $L_1, L_2 \subset J^1 \R^{n}$ whose fronts $X_1 = \pi(L_1), X_2 = \pi(L_2) \subset \R^{n+1}$ are smooth hypersurfaces, note that
 $T(X_1, X_2) = \pi(L_1\cap  L_2)$.
\end{remark}

For $0 \leq j < i \leq n$, recall the notation
$$
\xymatrix{
h_{i, j} := h_{ i - j}(x_{j+1}, \ldots, x_i)
}
$$
so in particular $h_{i, 0} = h_i(x_1, \ldots, x_i)$ and $h_{i, i-1} = h_1(x_i) = x_i$.
Set 
$$
\xymatrix{
T_{i, j} = \{h_{i, j}  = 0\} \subset \R^{n+1}
}
$$
Note $h_{i, j}$ is independent of $x_0, \ldots, x_j$, and we have
 $$
 T_{i, j} = \R^{j+1} \times {}^{n-j - 1} \Gamma_{i - j -1}
 $$

\begin{lemma}\label{lem: tang}
For $0 \leq j < i \leq n$,
the tangency locus $T({}^n \Gamma_i, {}^n \Gamma_j) \subset \R^{n+1}$   is the intersection of
either  ${}^n \Gamma_i$ or ${}^n \Gamma_j$ with the union
$$
\xymatrix{
\{h_{i, j} = 0\} \cup \bigcup \limits_{k = 0}^{j-1} \{h_{i, k} = h_{j, k} = 0\}
= T_{i, j} \cup \bigcup \limits_{k = 0}^{j-1}(T_{i, k} \cap T_{j, k})
}
$$
\end{lemma}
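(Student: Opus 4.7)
Since both ${}^n \Gamma_i$ and ${}^n \Gamma_j$ are graphs over $\R^n$ (with the coordinate $x_0$ as graph axis), a point lies in their tangency locus iff the values and gradients of $h_i^2$ and $h_j^2$ agree there, i.e.
\[
h_i^2 = h_j^2, \qquad h_i\, \partial_{x_k} h_i = h_j\, \partial_{x_k} h_j \text{ for all } k=1,\dots,n.
\]
Since $h_i = x_1 - h_{i,1}^2$ and similarly for $h_j$, we have $\partial_{x_1} h_i = \partial_{x_1} h_j = 1$, so the $k=1$ component of the gradient equality collapses to $h_i = h_j$, which in turn automatically yields $h_i^2 = h_j^2$. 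Thus the tangency condition is equivalent to $h_i = h_j$ together with the gradient equalities for $k \geq 2$. The plan is to prove the lemma by induction on $j$ (for all $i$ and $n$ simultaneously), with base case $j=0$ immediate: on ${}^n\Gamma_0=\{x_0=0\}$, tangency with ${}^n\Gamma_i$ is equivalent to $h_i=0$, matching $T_{i,0}$.

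For the inductive step, split into Case A where $h_i = h_j = 0$ (so the point lies in $T_{i,0}\cap T_{j,0}$, the $k=0$ term of the union) and Case B where $h_i=h_j\neq 0$. In Case B the equation $h_i=h_j$ becomes $h_{i,1}^2 = h_{j,1}^2$, and using $\partial_{x_k}h_i = -\partial_{x_k}(h_{i,1}^2)$ the gradient equalities for $k\geq 2$, after dividing by the nonzero scalar $h_i = h_j$, become $\partial_{x_k}(h_{i,1}^2) = \partial_{x_k}(h_{j,1}^2)$. But these are exactly the tangency conditions of the graphs $\{y = h_{i,1}^2\}$ and $\{y = h_{j,1}^2\}$, which under the relabeling $y \leftrightarrow x_0^{\mathrm{new}}$, $(x_2,\dots,x_n) \leftrightarrow (x_1^{\mathrm{new}},\dots,x_{n-1}^{\mathrm{new}})$ are precisely ${}^{n-1}\Gamma_{i-1}$ and ${}^{n-1}\Gamma_{j-1}$ in $\R^n$. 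By the inductive hypothesis, tangency happens exactly on $T'_{i-1,j-1}\cup \bigcup_{k=0}^{j-2}(T'_{i-1,k}\cap T'_{j-1,k})$, and the translation $h_{i-1,k}^{\mathrm{new}} = h_{i-k-1}(x_{k+2},\dots,x_i) = h_{i,k+1}$ identifies this with $T_{i,j}\cup \bigcup_{\ell=1}^{j-1}(T_{i,\ell}\cap T_{j,\ell})$. Merging Case A and Case B yields the full union $T_{i,j}\cup \bigcup_{k=0}^{j-1}(T_{i,k}\cap T_{j,k})$.

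For the reverse inclusion (and to see that the tangency locus equals the intersection with \emph{either} graph), I would verify directly that $h_i = h_j$ and $\nabla h_i = \nabla h_j$ hold pointwise on each piece of the union. The key is a downward induction on $\ell$ using the recursion $h_{i,\ell-1} = x_\ell - h_{i,\ell}^2$: on $T_{i,j}$ one has $h_{i,j}=0$, whence $h_{i,j-1} = x_j = h_{j,j-1}$, and inductively $h_{i,\ell} = h_{j,\ell}$ for all $\ell \leq j-1$, giving $h_i=h_j$; the analogous argument on $T_{i,k}\cap T_{j,k}$ gives $h_{i,\ell}=h_{j,\ell}$ for $\ell\leq k-1$. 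The gradient equality then follows from the explicit formula $\partial_{x_k} h_i = (-2)^{k-1} h_{i,1}\cdots h_{i,k-1}$, checking separately the ranges $k\leq \min(j,k^\ast)$ (where the relevant factors match), $k=k^\ast+1$ (where both sides are zero by construction), and $k>k^\ast+1$ (where both sides vanish, the $h_j$ side because $h_j$ is independent of $x_k$ once $k>j$).

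The main obstacle is bookkeeping: the inductive translation of indices between the $(n-1)$-dimensional reduced problem and the original coordinates must be done carefully so that $T'_{a,b}$ in the reduced setup matches $T_{a+1,b+1}$ in the original, and one must verify $h_i = h_j$ on the candidate union (which in particular implies the two claimed equalities $T = {}^n\Gamma_i \cap U = {}^n\Gamma_j \cap U$ coincide). All of this is elementary once the recursion $h_{i,j} = x_{j+1} - h_{i,j+1}^2$ and the explicit gradient formula are in hand.
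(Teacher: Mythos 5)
Your proposal is correct and follows essentially the same route as the paper: reduce tangency to equality of values and gradients of $h_i^2, h_j^2$, note the $x_1$-derivative forces $h_i = h_j$, then split into $h_i=h_j=0$ (yielding $T_{i,0}\cap T_{j,0}$) versus $h_i=h_j\neq 0$ (where dividing by the common nonzero factor reduces to the tangency of the next-level graphs), and iterate. The paper phrases the iteration as "repeating the argument" rather than a formal induction on $j$, and is terser about the reverse inclusion, but the substance is identical.
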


\begin{proof}

Since ${}^n \Gamma_i, {}^n \Gamma_j$ are the graphs of $h_i^2, h_j^2$, the  projection of $T({}^n \Gamma_i$, ${}^n \Gamma_j) $ to the domain $\R^n$ is cut out by 
$$
\xymatrix{
h_i^2 = h_j^2 & dh^2_i =  dh^2_j 
}
$$

Note  $h_i = h_{i, 0}  = x_1 - h_{i, 1}^2$, $h_j = h_{j, 0} = x_1 - h_{j, 1}^2$. By examining the $dx_1$-component of $dh^2_i = dh^2_j $, we see it implies
$h_i = h_j$. Thus  the  projection of $T({}^n \Gamma_i$, ${}^n \Gamma_j) $ is cut out by 
the  single equation
$
dh^2_i = dh^2_j
$
which in turn
 implies $h_i = h_j$.

To satisfy 
$
dh^2_i = dh^2_j
$,  so in particular $h_i = h_j$, there are two possibilities: (i) $h_i = h_j  = 0$; or (ii) $h_i = h_j  \not = 0$.
In case (i), we find the subset $\{ h_{i, 0} = h_{j, 0} = 0\}$ appearing in the union of  the assertion of the lemma. 
In case (ii), we observe  $
dh^2_i = dh^2_j
$ 
 is then equivalent to  
$
dh^2_{i, 1} = dh^2_{j, 1}
$
which in turn implies $h_{i, 1} = h_{j, 1}$.

Now we repeat the argument. To satisfy $
dh^2_{i, 1} = dh^2_{j, 1},
$
 so in particular $h_{i, 1} = h_{j, 1}$, there are two possibilities: (i) $h_{i, 1} = h_{j, 1}  = 0$; or (ii) $h_{i, 1} = h_{j, 1}  \not = 0$. In case (i), we find the subset $\{ h_{i, 1} = h_{j, 1} = 0\}$ appearing in the union of  the assertion of the lemma. 
In case (ii), we observe
$
dh^2_{i, 1} = dh^2_{j, 1}
$
  is then equivalent to  
$
dh^2_{i, 2} = dh^2_{j, 2}
$
which in turn implies $h_{i, 2} = h_{j, 2}$.

Iterating this argument, we obtain the subset  $\bigcup_{k = 0}^{j-1} \{h_{i, k} = h_{j, k} = 0\}$, and arrive at the final equation
$
dh^2_{i, j} = 0.
$
By examining the $dx_{j+1}$-term, we see 
$
dh^2_{i, j} = 0
$
 holds if and only if $h_{i, j} = 0$, which gives the remaining subset of the assertion of the lemma.
\end{proof}

\begin{remark}
The only evident redundancy in the description of the lemma is $T_{i, j-1} \cap T_{j,j-1} \subset T_{i, j}$ since 
$h_{i, j-1} = x_j - h_{i,j}^2$, $h_{j, j-1} = x_j$, so their vanishing implies the vanishing of $h_{i,j}$.   
\end{remark}

We will be particularly interested in the locus $T_{i, j} \subset T({}^n \Gamma_i, {}^n \Gamma_j)$ and formalize its structure in the following definition.

\begin{definition}
Given smooth hypersurfaces $X_1, X_2 \subset  \R^{n+1}$, we denote by $\tau^\circ(X_1, X_2) \subset T(X_1, X_2)$ the subset of points $x\in X_1 \cap X_2$ where in some local coordinates we have  $X_1 = \{x_0  = 0\}$, $X_2= \{ x_0 = x_1^2\}$. We write   $\tau(X_1, X_2)  \subset T(X_1, X_2)$ for the closure of $\tau^\circ(X_1, X_2)$, and refer to it as the {\em primary tangency} of~$X_1, X_2$.
\end{definition}

\begin{remark}
Given smooth Legendrians $L_1, L_2 \subset J^1 \R^{n}$ whose fronts $X_1 = \pi(L_1), X_2 = \pi(L_2) \subset \R^{n+1}$ are smooth hypersurfaces, note that
 $\tau^\circ(X_1, X_2)$ is the front projection
of where $L_1, L_2$ intersect cleanly in codimension one.
\end{remark}

We have the following consequence of Lemma~\ref{lem: tang}.

\begin{cor}\label{cor: quad prim tang}
For $0 \leq j < i \leq n$,
the primary tangency $\tau({}^n \Gamma_i, {}^n \Gamma_j) \subset \R^{n+1}$   is the intersection of 
either  ${}^n \Gamma_i$ or ${}^n \Gamma_j$
with $T_{i, j}$.
\end{cor}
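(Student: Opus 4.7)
The plan is to combine Lemma \ref{lem: tang} with a direct local computation. By that lemma, together with the Remark absorbing the $k=j-1$ case, one has
\[
T({}^n\Gamma_i, {}^n\Gamma_j) = {}^n\Gamma_i \cap \Bigl(T_{i,j} \cup \bigcup_{k< j-1} (T_{i,k} \cap T_{j,k})\Bigr).
\]
I want to identify the primary tangency $\tau$ with the first piece ${}^n\Gamma_i \cap T_{i,j}$, and show the secondary components $T_{i,k}\cap T_{j,k}$ for $k<j-1$ contribute nothing to $\tau$.

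For the inclusion ${}^n\Gamma_i \cap T_{i,j} \subseteq \tau$, I would work on the dense open subset of $T_{i,j}$ where no other vanishing conditions hold and $h_j \neq 0$. A short induction on the recursion $h_{i,k-1} - h_{j,k-1} = h_{j,k}^2 - h_{i,k}^2$ gives $h_i = h_j$ on $T_{i,j}$, and in fact shows $h_i - h_j$ vanishes to order exactly two in the smooth function $y := h_{i,j}$. Note $y$ cuts out $T_{i,j}$ transversely since $h_{i,j} - x_{j+1}$ is independent of $x_{j+1}$. Hence near such a point
\[
h_i^2 - h_j^2 = (h_i - h_j)(h_i + h_j) = y^2 \cdot g,
\]
where $g = 2h_j + O(y^2)$ is smooth and nonzero by the hypothesis $h_j\neq 0$. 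After the coordinate shift $\bar x_0 = x_0 - h_j^2$ the surface ${}^n\Gamma_j$ becomes $\{\bar x_0 = 0\}$ and ${}^n\Gamma_i$ becomes $\{\bar x_0 = y^2 g\}$. The Morse lemma with parameters in the $y$-variable then produces a local diffeomorphism to the standard primary tangency model $\{\bar x_0 = 0\},\ \{\bar x_0 = \tilde y^2\}$. Thus this open subset of ${}^n\Gamma_i \cap T_{i,j}$ lies in $\tau^\circ$, and taking closure yields the inclusion.

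For the reverse inclusion $\tau \subseteq {}^n\Gamma_i \cap T_{i,j}$, I would show that no point $p$ of a secondary component $T_{i,k}\cap T_{j,k}$ with $k<j-1$, lying outside $T_{i,j}$, can be a limit of primary tangencies. Indeed, at the standard primary model the local tangency locus is a smooth codimension-one submanifold of either hypersurface; however at such a $p$ the local tangency locus in ${}^n\Gamma_i$ is cut out by the two independent functions $h_{i,k}, h_{j,k}$, so it has codimension two. This prevents any neighborhood of $p$ from meeting $\tau^\circ$.

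The main obstacle is the coordinate analysis in Step 2: confirming that the leading-order Hessian of $h_i^2 - h_j^2$ in the $y$-direction is genuinely nondegenerate on an open dense subset of $T_{i,j}$, and that the Morse-with-parameters reduction can be performed while keeping the ambient geometry (the other coordinates) intact. Tracking the recursive identities for $h_{i,k} - h_{j,k}$ carefully makes this manageable, after which the remaining verifications are routine.
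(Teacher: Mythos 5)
Your proof is correct and follows the route the paper intends: the corollary is stated there as an immediate consequence of Lemma~\ref{lem: tang}, with the implicit content being exactly what you make explicit, namely that along $T_{i,j}$ the difference $h_i^2-h_j^2$ is $h_{i,j}^2$ times a generically nonvanishing factor (so one gets the standard quadratic-tangency model on a dense open set, hence all of $\Gamma_i\cap T_{i,j}$ after closure), while the loci $T_{i,k}\cap T_{j,k}$ are of codimension two and so cannot meet $\tau^\circ$. One small imprecision: the unit is not $2h_j+O(y^2)$ but rather $2h_j$ times a product of factors $\pm 2h_{j,k}$ coming from your recursion, which is harmless since you already restrict to the dense open subset where all these are nonzero.
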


Before continuing, let us record the following for future use.

\begin{lemma}\label{lem: tang of tang}
 Fix $0 \leq k < j\leq n-1$. 
 
We have 
$$
 \xymatrix{
 \tau(\tau(\Gamma_n,{}^n \Gamma_k),\tau({}^n \Gamma_j,{}^n \Gamma_k))=\tau ({}^n \Gamma_n,{}^n \Gamma_j)  \cap \tau({}^n \Gamma_j,{}^n \Gamma_k) 
 }$$
 where the
 primary tangency of $\tau(\Gamma_n,{}^n \Gamma_k)$, $\tau({}^n \Gamma_j,{}^n \Gamma_k)$ of the left hand side is calculated in ${}^n \Gamma_k \simeq \R^n$.
 
\end{lemma}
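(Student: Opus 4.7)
The plan is to express both sides in terms of the vanishing loci $T_{i,j} = \{h_{i,j} = 0\}$ using Corollary~\ref{cor: quad prim tang}, then recognize that inside ${}^n\Gamma_k$ the two hypersurfaces in the outer primary tangency on the left are again of the form ${}^m\Gamma_\bullet$, so Corollary~\ref{cor: quad prim tang} applies a second time.

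First, I would unpack the right-hand side. By Corollary~\ref{cor: quad prim tang},
\[
\tau({}^n\Gamma_n,{}^n\Gamma_j) = {}^n\Gamma_j \cap T_{n,j}, \qquad \tau({}^n\Gamma_j,{}^n\Gamma_k) = {}^n\Gamma_j \cap T_{j,k},
\]
so since $\tau({}^n\Gamma_j,{}^n\Gamma_k) \subset {}^n\Gamma_j$ automatically,
\[
\tau({}^n\Gamma_n,{}^n\Gamma_j) \cap \tau({}^n\Gamma_j,{}^n\Gamma_k) = \tau({}^n\Gamma_j,{}^n\Gamma_k) \cap T_{n,j}.
\]

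Next I would analyze the left-hand side by identifying ${}^n\Gamma_k \simeq \R^n$ using the remaining coordinates $(x_1,\ldots,x_n)$. Inside this copy of $\R^n$,
\[
\tau({}^n\Gamma_n,{}^n\Gamma_k) = \{h_{n,k}=0\} = \{x_{k+1} = h_{n,k+1}(x_{k+2},\ldots,x_n)^2\},
\]
\[
\tau({}^n\Gamma_j,{}^n\Gamma_k) = \{h_{j,k}=0\} = \{x_{k+1} = h_{j,k+1}(x_{k+2},\ldots,x_j)^2\},
\]
using $h_{i,k} = x_{k+1} - h_{i,k+1}^2$. After renaming $\tilde x_0 = x_{k+1}$, $\tilde x_i = x_{k+1+i}$, these two hypersurfaces become precisely ${}^{n-k-1}\Gamma_{n-k-1}$ and ${}^{n-k-1}\Gamma_{j-k-1}$ (possibly stabilized by the coordinates $x_{j+1},\ldots,x_n$ in the second case, which do not affect the computation). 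A second application of Corollary~\ref{cor: quad prim tang}, now in this $(n-k-1)$-dimensional setting, gives
\[
\tau\bigl(\tau({}^n\Gamma_n,{}^n\Gamma_k),\tau({}^n\Gamma_j,{}^n\Gamma_k)\bigr) = \tau({}^n\Gamma_j,{}^n\Gamma_k) \cap \tilde T,
\]
where $\tilde T$ is the locus cut out by $h_{n-k-1,j-k-1}(\tilde x_{j-k},\ldots,\tilde x_{n-k-1})=0$. Translating back through the coordinate relabeling, $\tilde T$ is exactly $T_{n,j} = \{h_{n,j}(x_{j+1},\ldots,x_n)=0\}$.

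Combining, both sides equal $\tau({}^n\Gamma_j,{}^n\Gamma_k) \cap T_{n,j}$, which proves the lemma. The main obstacle is really just bookkeeping: one has to verify that the coordinate identification ${}^n\Gamma_k \simeq \R^n$ genuinely turns the two hypersurfaces into standard quadratic graphs of the form ${}^{\bullet}\Gamma_\bullet$ with matching indices, so that the second application of Corollary~\ref{cor: quad prim tang} is legitimate. Once that is verified, the equality is immediate from the observation that $T_{n-k-1,j-k-1}$ pulled back through the relabeling is precisely $T_{n,j}$.
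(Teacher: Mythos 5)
Your proposal is correct and follows essentially the same route as the paper: both arguments apply Corollary~\ref{cor: quad prim tang} twice, first to identify the tangency loci with ${}^n\Gamma_k$ as the cylinders $T_{n,k}=\R^{k+1}\times{}^{n-k-1}\Gamma_{n-k-1}$ and $T_{j,k}=\R^{k+1}\times{}^{n-k-1}\Gamma_{j-k-1}$ (your coordinate relabeling $\tilde x$ is exactly this identification), and then a second time to compute their primary tangency as the locus $h_{n,j}=0$, so that both sides reduce to $\tau({}^n\Gamma_j,{}^n\Gamma_k)\cap T_{n,j}$.
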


         \begin{figure}[h]
\includegraphics[scale=0.6]{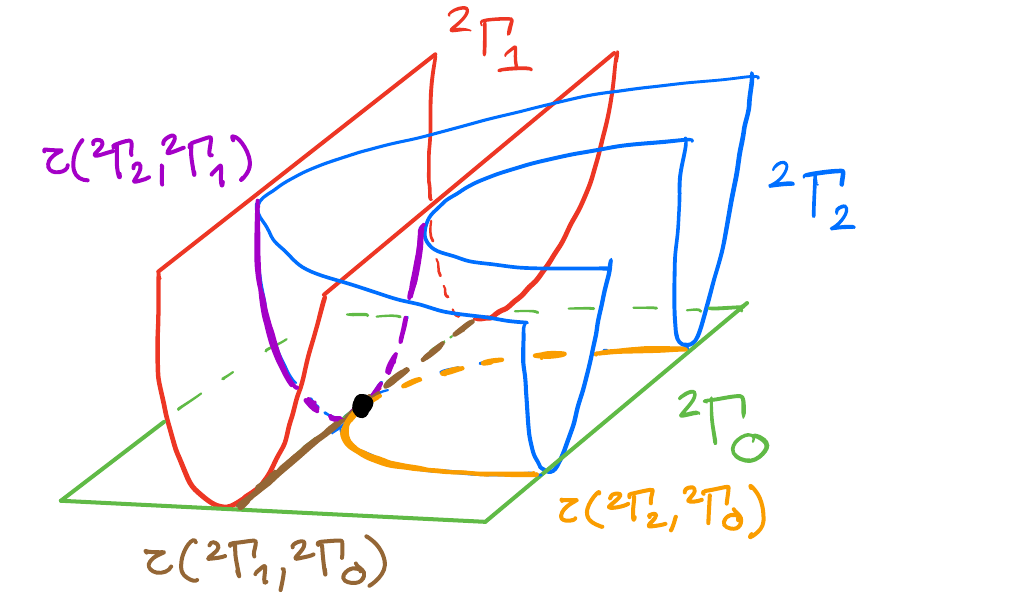}
\caption{Verification of the conclusion of Lemma \ref{lem: tang of tang} for $n=2$, in this case both the right and left hand sides of the equality $\tau(\tau({}^2 \Gamma_2, {}^2 \Gamma_0), \tau({}^2\Gamma_1,{}^2\Gamma_0))=\tau({}^2 \Gamma_2,{}^2\Gamma_1) \cap \tau({}^2 \Gamma_1 ,{}^2 \Gamma_0)$ consist of the origin.}
\label{fig:tangencies}
\end{figure}

\begin{proof}
By the preceding corollary, the left hand side is  the intersection ${}^n \Gamma_k \cap \tau(T_{n, k} , T_{j, k})$.

Note  ${}^n \Gamma_k  \cap T_{j, k}  = \tau({}^n \Gamma_j, {}^n \Gamma_k) =  {}^n \Gamma_j \cap T_{j, k}$. 
Hence  
$${}^n \Gamma_k \cap \tau(T_{n, k} , T_{j, k}) = 
{}^n \Gamma_j \cap \tau(T_{n, k} , T_{j, k})$$ 
since $y \in {}^n \Gamma_k \cap \tau(T_{n, k} , T_{j, k})$  $\iff$ $y \in {}^n \Gamma_k \cap T_{j, k}$, $y\in \tau(T_{n, k} , T_{j, k})$ $\iff$
 $y \in {}^n \Gamma_j \cap T_{j, k}$, $y\in \tau(T_{n, k} , T_{j, k})$
 $\iff$ $y \in {}^n \Gamma_j \cap \tau(T_{n, k} , T_{j, k})$.

Next, recall  
 $$
 \xymatrix{
 T_{n, k} = \R^{k+1} \times {}^{n-k - 1} \Gamma_{n - k -1}
 &
 T_{j, k} = \R^{k+1} \times {}^{n-k - 1} \Gamma_{j - k -1}
 }
 $$
  Hence by the preceding corollary, we have 
 $$
 \xymatrix{
 \tau(T_{n, k} , T_{j, k}) = T_{j, k} \cap \{h_{n,j} = 0\} 
 }
 $$
 Thus the left hand side is  given by $ {}^n \Gamma_j \cap T_{j, k} \cap T_{n, j}$.

On the other hand, by the preceding corollary, the right hand side is also given by
$
{}^n\Gamma_j \cap T_{n, j} \cap T_{j, k}. 
$
\end{proof}

\subsubsection{More on distinguished quadrants}

\begin{cor}\label{cor: embed}
For $0 \leq j < i \leq n$, we have 
$$
\xymatrix{
{}^n \Gamma^\eps_i \cap {}^n \Gamma^\eps_j  = T({}^n \Gamma^\eps_i, {}^n \Gamma^\eps_j) = \tau({}^n \Gamma^\eps_i, {}^n \Gamma^\eps_j)
}
$$
and they 
coincide with the closed boundary face of ${}^n \Gamma^\eps_i$ cut out by
$
h_{i, j}= 0.
$
\end{cor}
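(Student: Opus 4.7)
The plan is to show first that ${}^n \Gamma^\eps_i \cap {}^n \Gamma^\eps_j$ coincides with the closed boundary face $\{h_{i,j} = 0\} \cap {}^n \Gamma^\eps_i$, and then to squeeze this equality between the primary tangency and the full tangency using Corollary~\ref{cor: quad prim tang}. The engine is an iterative unpacking of the sign inequalities that cut out the two quadrants.

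For the main inclusion $\supseteq$: a point on $\{h_{i,j} = 0\} \cap {}^n \Gamma^\eps_i$ already satisfies $h_{i, j-1} = x_j - h_{i,j}^2 = x_j = h_{j, j-1}$, and feeding this into the recursion $h_{m, k-1} = x_k - h_{m, k}^2$ (applied for $m=i$ and $m=j$) yields $h_{i, k} = h_{j, k}$ for all $k \leq j-1$. Thus $h_i = h_j$, so $x_0 = \eps_0 h_i^2 \circ \sigma_\eps = \eps_0 h_j^2 \circ \sigma_\eps$, and the first $j$ quadrant inequalities for ${}^n \Gamma^\eps_j$ coincide with those for ${}^n \Gamma^\eps_i$; hence the point lies in ${}^n \Gamma^\eps_j$.

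For the converse inclusion $\subseteq$: the crucial observation is that the inequality $\eps_{k-1}\eps_k h_{i, k-1}\circ \sigma_\eps \leq 0$ holds in both quadrants for each $k = 1,\ldots, j$ (with $h_{i,k-1}$ replaced by $h_{j,k-1}$ in the second one). Starting from $h_i^2 = h_j^2$, the $k=1$ inequalities force $h_i$ and $h_j$ to have the same sign, hence $h_i = h_j$; this yields $h_{i,1}^2 = h_{j,1}^2$ via $h_m = x_1 - h_{m,1}^2$. The $k=2$ inequalities then upgrade this to $h_{i,1}=h_{j,1}$, and so on. After $j$ steps we reach $h_{i, j-1} = h_{j, j-1}$; since $h_{j, j-1} = h_1(x_j) = x_j$ and $h_{i, j-1} = x_j - h_{i,j}^2$, this forces $h_{i,j} = 0$.

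Finally, to pass from this boundary face to the two tangency notions, I would invoke Corollary~\ref{cor: quad prim tang}, which identifies $T_{i, j} \cap {}^n\Gamma_i = \tau({}^n\Gamma_i, {}^n\Gamma_j)$ in the unquadranted case. Since primary tangency is a local condition (the front of one graph looking like $\{x_0=0\}$ and the other like $\{x_0 = x_1^2\}$), intersecting with the quadrants gives $\{h_{i,j}=0\} \cap {}^n\Gamma^\eps_i = \tau({}^n\Gamma^\eps_i, {}^n\Gamma^\eps_j)$; combined with the evident chain $\tau \subseteq T \subseteq {}^n\Gamma^\eps_i \cap {}^n\Gamma^\eps_j$ and the equality proved in the preceding steps, all three sets agree. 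The main obstacle is not conceptual but bookkeeping: one must track the pairs of inequalities step by step and be careful with the edge case where $h_{i,k-1}$ vanishes, in which case $h_{j,k-1}$ also vanishes by $h_{i,k-1}^2 = h_{j,k-1}^2$, so the sign-matching argument degenerates harmlessly.
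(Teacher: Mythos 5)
Your argument is correct, but it takes a genuinely different route from the paper. The paper's proof is a short induction on $n$: the case $j=0$ is read off directly from the definitions (the graph of $\eps_0 h_i^2\circ\sigma_\eps$ meets $\{x_0=0\}$ tangentially exactly where $h_{i,0}\circ\sigma_\eps=0$), and the case $j>0$ is reduced to it by the slicing homeomorphism $s$ of Lemma~\ref{lem: induction for fronts}, which identifies ${}^n\Gamma^\eps_i\cap\{\eps_0 x_0\geq 0\}$ with $\eps_0\R_{\geq0}\times{}^{n-1}\Gamma^{\eps'}_{i-1}$. You instead prove the set-theoretic identity ${}^n\Gamma^\eps_i\cap{}^n\Gamma^\eps_j=\{h_{i,j}\circ\sigma_\eps=0\}\cap{}^n\Gamma^\eps_i$ in one shot by peeling off the quadrant inequalities: the recursion $h_{m,k-1}=x_k-h_{m,k}^2$ propagates the equality of squares down to equality of values because the two quadrants impose the \emph{same} sign on $h_{i,k-1}\circ\sigma_\eps$ and $h_{j,k-1}\circ\sigma_\eps$ at each stage. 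This is a clean and self-contained computation, and it buys you transparency about exactly which inequalities are used (only the first $j$ of them); the paper's induction buys brevity and reuses machinery already needed elsewhere. Your reduction of the tangency statements to Corollary~\ref{cor: quad prim tang} is the right move and closes the sandwich $\tau\subseteq T\subseteq{}^n\Gamma^\eps_i\cap{}^n\Gamma^\eps_j$ correctly. Two small caveats: (1) the functions cutting out ${}^n\Gamma^\eps_i$ are $h_{i,k}\circ\sigma_\eps$, not $h_{i,k}$, so your identities should be read after composing with the involution throughout (the corollary's own statement commits the same abuse); (2) when you ``intersect with the quadrants,'' you are implicitly extending the definition of $\tau$, stated in the paper for smooth hypersurfaces, to the corner pieces via the clean codimension-one intersection of their Legendrian lifts (as in the remark following the definition of $\tau^\circ$) --- the paper's proof relies on the same convention, so this is a matter of interpretation rather than a gap.
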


\begin{proof}
For $j = 0$, we have ${}^n \Gamma^\eps_0 = {}^n \Gamma_0 = \{ x_0 = 0 \}$. From the definitions, we have 
$$
\xymatrix{
{}^n \Gamma^\eps_i \cap {}^n \Gamma_0  = T({}^n \Gamma^\eps_i, {}^n \Gamma_0) = \tau({}^n \Gamma^\eps_i, {}^n \Gamma_0)
}
$$
which is cut out of ${}^n P^\eps_i$ by $
h_{i, 0} = h_i   = 0.$ 

For $j>0$, the assertions follow from Lemma~\ref{lem: induction for fronts} by induction on $n$.
\end{proof}

\begin{remark}\label{rem: quad cover}
Note for any $0 \leq j < i \leq n$, we have 
$$
\xymatrix{
 \tau({}^n \Gamma_i, {}^n \Gamma_j) = \bigcup_{\eps} \tau({}^n \Gamma^\eps_i, {}^n \Gamma^\eps_j)
}
$$
To see this, consider $x\in \tau({}^n \Gamma_i, {}^n \Gamma_j)$,  so  that $h_{i, j}(x) = 0$ by Corollary~\ref{cor: quad prim tang}.   Choose $\eps$ so that $x\in {}^n \Gamma^\eps_i$. 
Then by Corollary~\ref{cor: embed}, we have $x\in \tau({}^n \Gamma^\eps_i, {}^n \Gamma^\eps_j)$.
\end{remark}

%
%

For $i = 0$, let ${}^{n}L^\eps_0 =   \R^{n} \subset T^*\R^{n}$ denote the zero-section.
For $i = 1, \ldots, n$,  consider the conormal bundles
$$
\xymatrix{
{}^{n} L^\eps_{i} = T^*_{ {}^{n-1}\Gamma^\eps_{i-1}} \R^{n} \subset T^* \R^{n}
}
$$
 and their union
$$
\xymatrix{
{}^{n} L^\eps = \bigcup_{i = 0}^n {}^n  L^\eps_i
}
$$

Similarly, for $i = 0, \ldots, n$, consider the smooth Legendrian  
$$
\xymatrix{
{}^{n} \Lambda^\eps_i \subset J^1 \R^{n}
}
$$
that maps diffeomorphically to ${}^n\Gamma^\eps_i \subset \R^{n+1}$ under the front projection $\pi:J^1 \R^n \to  \R^{n+1}$,
and their union
$$
\xymatrix{
{}^n  \Lambda^\eps = \bigcup_{i =0}^n {}^{n}  \Lambda^\eps_i
}
$$

Note the contactomorphism of Lemma~\ref{lem: tilt}
takes ${}^{n}  \Lambda^\eps_i \subset J^1 \R^{n}$  isomorphically to
$\{0\} \times {}^n L^\eps_i \subset \{0\} \times T^*\R^n$, and
thus  ${}^{n}  \Lambda^\eps \subset J^1 \R^{n}$  isomorphically to
$\{0\} \times {}^n L^\eps \subset \{0\} \times T^*\R^n$.

We have the following topological consequence of  Lemma~\ref{lem: induction for fronts}.

\begin{cor} 
As a union of smooth manifolds with corners, ${}^n \Gamma^\eps \subset \R^{n+1}$ is given by the gluing
$$
\xymatrix{
{}^n \Gamma^\eps = ({}^{n-1} \Gamma^{\eps'} \times \R_{\geq 0}) \coprod_{({}^{n-1} \Gamma^{\eps'} \times \{0\})} (\R^n \times \{0\})
}
$$
where $\eps' = (\eps_0\eps_1, \eps_2, \ldots, \eps_n)$. The front projection takes ${}^n L^\eps\subset J^1 \R^{n}$ homeomorphically to 
${}^n \Gamma^\eps\subset \R^{n+1}$.
\end{cor}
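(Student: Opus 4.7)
The plan is to prove both assertions by induction on $n$, using the inductive structure already established in Lemma~\ref{lem: induction for fronts 2} and Corollary~\ref{cor: embed}. I will split ${}^n\Gamma^\eps$ into the horizontal piece ${}^n\Gamma^\eps_0 = \{x_0 = 0\}$ and the union of all graphical pieces ${}^n\Gamma^\eps_i$ for $i \geq 1$, then identify each side of the gluing separately.

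First I observe that, by definition, ${}^n\Gamma^\eps_0 = \{x_0 = 0\}$ is exactly $\R^n \times \{0\}$, and that for each $i \geq 1$ the defining equation $x_0 = \eps_0 h_i^2 \circ \sigma_\eps$ forces ${}^n\Gamma^\eps_i \subset \{\eps_0 x_0 \geq 0\}$, since $h_i^2 \geq 0$. Applying Lemma~\ref{lem: induction for fronts 2} piecewise, the homeomorphism $s$ carries $\bigcup_{i=1}^n {}^n\Gamma^\eps_i$ onto $\eps_0\R_{\geq 0} \times \bigcup_{i=0}^{n-1}{}^{n-1}\Gamma^{\eps'}_i = \eps_0\R_{\geq 0} \times {}^{n-1}\Gamma^{\eps'}$, giving the product-with-corners structure of the half-space side of the gluing (here $\eps' = (\eps_1,\ldots,\eps_n)$ as in the Lemma).

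Next I verify the gluing along $\{x_0 = 0\}$. On this hyperplane the map $s(x_0,x_1,\ldots,x_n) = (x_0, x_1 + \eps_0 \sqrt{\eps_0 x_0}, x_2, \ldots, x_n)$ restricts to the identity, so the boundary of the half-space piece, namely ${}^{n-1}\Gamma^{\eps'} \times \{0\}$, is identified as a subset of $\{x_0 = 0\} = \R^n \times \{0\}$ with the intersection $\bigl(\bigcup_{i \geq 1}{}^n\Gamma^\eps_i\bigr) \cap {}^n\Gamma_0$. The latter is precisely the corresponding union of primary tangencies with ${}^n\Gamma_0$, whose description by the equations $h_{i,0} = 0$ (cf.~Corollary~\ref{cor: embed}) recovers ${}^{n-1}\Gamma^{\eps'}$ by the inductive definition of the models. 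This gives the claimed pushout.

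Finally, for the front-projection assertion, each smooth piece ${}^n\Lambda^\eps_i$ was defined to project diffeomorphically onto ${}^n\Gamma^\eps_i$, so the restriction of $\pi$ to the union is automatically continuous and proper. To conclude it is a homeomorphism onto ${}^n\Gamma^\eps$, I must check injectivity, which reduces to verifying that on every pairwise intersection ${}^n\Gamma^\eps_i \cap {}^n\Gamma^\eps_j$ the Legendrian lifts ${}^n\Lambda^\eps_i$ and ${}^n\Lambda^\eps_j$ coincide pointwise. This is where the key content lies: by Corollary~\ref{cor: embed} the intersection equals the primary tangency $\tau({}^n\Gamma^\eps_i, {}^n\Gamma^\eps_j)$, at which the cooriented tangent planes of the two sheets agree. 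Since the positive conormal lift of a cooriented hypersurface is determined by its cooriented tangent plane, the two Legendrian lifts must coincide there. The main technical obstacle is this last point — tracking that the two lifts agree at tangencies — and the earlier work in Corollary~\ref{cor: embed} and Remark~\ref{rem: quad cover} is precisely what makes it routine.
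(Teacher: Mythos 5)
Your proof is correct and follows exactly the route the paper intends: the corollary is stated there without proof as a direct topological consequence of the inductive splitting lemma, and your decomposition into $\{x_0=0\}$ plus the graphical pieces, the application of Lemma~\ref{lem: induction for fronts 2}, and the use of Corollary~\ref{cor: embed} to see that the Legendrian lifts agree over every intersection point (so that $\pi$ is injective on ${}^n\Lambda^\eps$) is precisely the intended argument, fleshed out. One small point worth flagging: the reduced sign list produced by Lemma~\ref{lem: induction for fronts 2} is $(\eps_1,\ldots,\eps_n)$ as you use, whereas the corollary's displayed $\eps'=(\eps_0\eps_1,\eps_2,\ldots,\eps_n)$ is the convention of Lemma~\ref{lem: induction for fronts} for the $\delta$-models, so the statement as printed appears to carry a typo that your version silently corrects.
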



Before continuing, let us record the following for future use.

\begin{cor}\label{cor: clean intersect}
For $0 < j < i \leq n$, the closure of the codimension one clean intersection of ${}^nL^\eps_i, {}^n L_j$
is precisely $ {}^n L^\eps_i \cap {}^n L^\eps_j$.
\end{cor}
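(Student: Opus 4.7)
The plan is to reduce the claim directly to Corollary~\ref{cor: embed}, applied with indices $i-1, j-1$ to the fronts ${}^{n-1}\Gamma^\eps_{i-1}, {}^{n-1}\Gamma^\eps_{j-1} \subset \R^n$, which identifies the set-theoretic intersection of these fronts with their primary tangency locus $\tau({}^{n-1}\Gamma^\eps_{i-1}, {}^{n-1}\Gamma^\eps_{j-1})$, a manifold-with-corners cut out by $h_{i-1,j-1}=0$.

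My first step will be to unpack the definitions: a point $(x,p)\in T^*\R^n$ lies in ${}^n L^\eps_i \cap {}^n L^\eps_j$ precisely when $x$ lies in both fronts and $p$ is a nonnegative scalar multiple of the positive conormal covector to each. By Corollary~\ref{cor: embed} the base locus of this intersection is exactly $\tau$, and along $\tau$ the two fronts are in fact tangent (since the whole intersection of fronts equals the primary tangency). Since both fronts are presented as graphs over $\R^{n-1}$ with the graphical $dx_0$-coorientation, their positive conormal rays must coincide everywhere along $\tau$. Hence ${}^n L^\eps_i \cap {}^n L^\eps_j$ is the positive conormal bundle $T^+_\tau\R^n$ above the corner-manifold $\tau$, an $(n-1)$-dimensional subset of each of the two $n$-dimensional Lagrangians.

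Next I would verify that this set is the closure of a codimension-one clean intersection. Over the open dense stratum $\tau^\circ$ of $\tau$, on which no further boundary equations $h_{i-1,k}=0$ (for $k\ne j-1$) are active, the definition of primary tangency provides local coordinates in which ${}^{n-1}\Gamma^\eps_{i-1}$ and ${}^{n-1}\Gamma^\eps_{j-1}$ take the standard form $\{x_0=0\}$ and $\{x_0=x_1^2\}$. A short local computation in these coordinates exhibits $T^+_{\{x_0=0\}}\R^n$ and $T^+_{\{x_0=x_1^2\}}\R^n$ as smooth Lagrangians whose sum of tangent spaces has codimension one, with intersection equal to the tangent space of the common positive conormal ray over $\{x_0=x_1=0\}$. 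This is a codimension-one clean intersection in the usual sense, and since $\tau^\circ$ is open and dense in $\tau$, its closure recovers precisely the set $T^+_\tau\R^n$ described above.

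I do not expect a serious technical obstacle: the statement is essentially an unpacking of Corollary~\ref{cor: embed}, since the content of that corollary forces the two fronts to agree as cooriented hypersurfaces along their intersection. The only point meriting care is the matching of positive-conormal coorientations along $\tau$, which is automatic from the shared graphical presentation of both fronts over $\R^{n-1}$.
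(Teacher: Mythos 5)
Your argument is correct and follows essentially the same route as the paper: both reduce to Corollary~\ref{cor: embed} (together with the characterization of the primary tangency as the locus $h_{i-1,j-1}=0$ from Corollary~\ref{cor: quad prim tang}) and then lift to the conic intersection of the positive conormals. The extra detail you supply — the density of $\tau^\circ$ in $\tau$ and the local normal-form check of cleanness — is implicit in the paper's shorter proof.
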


\begin{proof}
The  closure of the codimension one clean intersection of ${}^n L^\eps_i, {}^n \Lambda_j$ 
is conic and projects to the primary tangency of ${}^{n-1} \Gamma^\eps_{i-1}, {}^{n-1} \Gamma_{j-1}$.
By Corollary~\ref{cor: quad prim tang}, the primary tangency of ${}^{n-1} \Gamma_{i-1}, {}^{n-1} \Gamma_{j-1}$
is cut out by $h_{i-1,j-1} = 0$. By Corollary~\ref{cor: embed}, this is precisely the tangency 
$T({}^{n-1} \Gamma^\eps_{i-1}, {}^{n-1} \Gamma_{j-1})$ and hence lifts precisely to the conic intersection 
$ {}^n L^\eps_i \cap {}^n L^\eps_j$.
\end{proof}

 
 \subsection{The case of $\sA_{n+1}$-tree}
 The following Theorem \ref {thm: quad unique} will play a key   role    in proving Proposition \ref{prop:unique-unsigned}.

\begin{theorem}\label{thm: quad unique}

Let $\varphi:T^*\R^n\to J^1 \R^n$ be an embedding as a Weinstein hypersurface. Assume  that the image of ${}^nL$ under $\varphi$ is transverse   to the fibers of the projection $J^1\R^n\to  \R^n$.  Let $\Upsilon = \pi (\varphi({}^nL)) \subset \R \times \R^n$ be (the germ of) the front at the central point.

Then there exists a diffeomorphism $\R\times\R^n\to\R\times \R^n$ taking $\Upsilon$ to the germ at the origin of ${}^n \Gamma \subset \R \times \R^n$.

\end{theorem}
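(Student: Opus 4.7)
The plan is to bring $\Upsilon$ to ${}^n\Gamma$ by three successive diffeomorphisms of $\R\times\R^n$. By the transversality hypothesis, each smooth piece $\varphi({}^nL_i)$ is the $1$-jet graph of a smooth function $f_i\colon\R^n\to\R$, so its front $\Upsilon_i=\{x_0=f_i\}$ is a smooth hypersurface in $\R\times\R^n$.

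First, the diffeomorphism $(x_0,x)\mapsto(x_0-f_0(x),x)$ straightens $\Upsilon_0$ to $\{x_0=0\}$. Second, for $i\geq 1$, since ${}^nL_0\cap{}^nL_i={}^{n-1}\Gamma_{i-1}$ as a subset of the zero section and $\varphi$ is an embedding, the cleanness of the Legendrian intersection $\varphi({}^nL_0)\cap\varphi({}^nL_i)$ forces $f_i$ to vanish to exactly second order along a smooth hypersurface $G_i\subset\R^n$. Crucially, the whole collection $\{G_i\}$ is the image of $\{{}^{n-1}\Gamma_{i-1}\}$ under a single diffeomorphism $\psi$ of $\R^n$, induced by $\varphi|_{{}^nL_0}$ followed by front projection and the straightening just performed. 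Applying $(x_0,x)\mapsto(x_0,\psi^{-1}(x))$ simultaneously normalizes $G_i={}^{n-1}\Gamma_{i-1}$ for all $i$, so that $f_i=u_i\cdot h_i^2$ for some smooth positive function $u_i$ on $\R^n$.

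It remains to normalize $u_i\equiv 1$. My plan is a Moser-type isotopy along the straight-line interpolation $f_i^t=(1-t+t\,u_i)\,h_i^2$, $t\in[0,1]$, running from the standard front ${}^n\Gamma$ at $t=0$ to the current configuration at $t=1$. Each $\Upsilon^t_i=\{x_0=f_i^t\}$ remains a smooth graph and the combinatorial intersection pattern is preserved. The family of diffeomorphisms is produced by integrating a time-dependent vector field $v^t$ on $\R\times\R^n$ that vanishes on $\{x_0=0\}$ and restricts to $(u_i-1)h_i^2\partial_{x_0}$ on each smooth stratum $\Upsilon^t_i$.

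The principal obstacle is the global construction of this vector field across the deeper strata where several $h_i$ vanish simultaneously. The extension must be well-defined on pairwise intersections $\Upsilon^t_i\cap\Upsilon^t_j$, which requires that $u_i-u_j$ vanish to sufficient order along the primary tangency locus $\tau(\Upsilon_i,\Upsilon_j)$. This compatibility is forced by the rigid structure of the clean Legendrian intersections $\varphi({}^nL_i)\cap\varphi({}^nL_j)$, whose front projections must coincide with the primary tangencies $\tau({}^n\Gamma_i,{}^n\Gamma_j)$ described by Lemma~\ref{lem: tang}. The subtle analytic content is then the smooth assembly of $v^t$ from its prescribed values on the strata, which I anticipate is handled by an inductive argument on the depth of the stratification, for instance via leaf-pruning as in Corollary~\ref{cor:int-arb}, reducing the global construction to consistent local ones.
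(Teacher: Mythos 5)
Your first two steps are sound, and the observation that the single diffeomorphism $\psi=\pi\circ\varphi|_{{}^nL_0}$ simultaneously normalizes all the tangency loci $T(\Upsilon_0,\Upsilon_i)$ is correct and efficient: since $T(\Upsilon_0,\Upsilon_i)=\pi(\varphi({}^nL_0\cap{}^nL_i))=\psi({}^{n-1}\Gamma_{i-1})$, pulling back by $\psi^{-1}$ does give $f_i=u_ih_i^2$ with $u_i$ smooth and nonvanishing. The gap is in the final step, and it is the heart of the matter. Normalizing the tangencies with $\Upsilon_0$ says nothing about the mutual primary tangencies $\tau(\Upsilon_i,\Upsilon_j)$ for $0<j<i$: these are front projections of $\varphi({}^nL_i\cap{}^nL_j)$, which do not lie in $\varphi({}^nL_0)$ and hence are not controlled by $\psi$. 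They are \emph{diffeomorphic} to the model tangencies, but your claim that they ``must coincide with'' $\tau({}^n\Gamma_i,{}^n\Gamma_j)$ of Lemma~\ref{lem: tang} is false; arranging this coincidence is precisely the hard part of the proof (it occupies Lemma~\ref{lm:TT} and Lemma~\ref{lem:ind} in the paper, which normalize $\tau(\Upsilon_n,{}^n\Gamma_j)$ one $j$ at a time, in decreasing order of $j$, using the structure of tangencies of tangencies). Concretely, without that normalization $u_i$ is just some positive function, whereas the Moser scheme requires the divisibility $u_i=1+\beta_i\prod_{j=1}^{i-1}h_{i,j}^2$ of Proposition~\ref{prop:divisibility}; only then can one find a deforming vector field that also preserves the remaining strata (Proposition~\ref{prop:vf}).

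The Moser step itself, as you set it up, is also not well-posed. The linear interpolation $f_i^t=(1-t+tu_i)h_i^2$ need not preserve the stratified intersection pattern: the loci where $f_i^t=f_j^t$ move with $t$ and need not remain tangencies of the correct type, so there may be no ambient diffeomorphism relating the time-$0$ and time-$t$ configurations at all. Moreover a vertical field $g\,\partial_{x_0}$ with $g|_{\Upsilon_i^t}=\dot f_i^t=(u_i-1)h_i^2$ is overdetermined on $\Upsilon_i^t\cap\Upsilon_j^t$: tangency there gives $f_i^t=f_j^t$, but you also need $\dot f_i^t=\dot f_j^t$, and for varying $t$ these two conditions force $h_i^2=h_j^2$ and $u_ih_i^2=u_jh_j^2$ on a locus where neither need hold. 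This is exactly why the paper does not use a vertical field: it normalizes one smooth piece at a time by induction on $n$, using the weighted radial field $v_{n-1}=\sum_i 2^{-i}x_i\partial_{x_i}$ of Lemma~\ref{lem: sym} (which preserves every ${}^n\Gamma_j$, $j\le n-1$) cut off by a function divisible by $\prod_{j}h_{n,j}$ so that the already-normalized tangency loci are preserved. To repair your argument you would need to (i) normalize all higher primary tangencies before interpolating and (ii) replace the vertical Moser field by one preserving the already-normalized strata; at that point you will have essentially reconstructed the paper's proof.
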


The proof of Theorem~\ref{thm: quad unique} will proceed by induction on the dimension $n$. At each stage, we will prove the fully parametric version:

\begin{theorem}\label{thm: quad unique parametric}

Let $\varphi^y:T^*\R^n\to J^1 \R^n$ be a family of  Weinstein hypersurface embeddings parametrized by a manifold $Y$. Assume  that the image of ${}^nL$ under $\varphi^y$ is transverse   to the fibers of the projection $J^1\R^n\to  \R^n$.  Let $\Upsilon^y = \pi (\varphi^y({}^nL)) \subset \R \times \R^n$ be (the germs of) the fronts at the central points.

Then there exists a family of diffeomorphisms $\psi^y:\R\times\R^n\to\R\times \R^n$ taking $\Upsilon^y$ to the germ at the origin of ${}^n \Gamma \subset \R \times \R^n$. If $\varphi^y=\text{Id}$ for $y \in \Op(K)$, where $K \subset Y$ is a closed subset, then we may assume $\psi^y=\text{Id}$ for $y \in \Op(K)$.

\end{theorem}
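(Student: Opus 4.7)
The plan is to prove both Theorems~\ref{thm: quad unique} and~\ref{thm: quad unique parametric} together by induction on $n$. The base case $n=0$ is immediate: both $\Upsilon^y$ and ${}^0\Gamma$ reduce to single points in $\R$, so a parametric translation (trivial on $\Op K$) suffices.

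For the inductive step, the first move is to normalize the zero-section component. Since $\varphi^y({}^nL_0)$ is the image of the zero section $\R^n\subset T^*\R^n$, it projects under $\pi$ to a smooth cooriented hypersurface $\Upsilon_0^y\subset \R^{n+1}$ through the origin. A parametric family of diffeomorphisms of $\R^{n+1}$, trivial on $\Op K$, straightens $\Upsilon_0^y$ to $\{x_0=0\}$ with coorientation $+\partial_{x_0}$. Next I would analyze the trace on this hyperplane: by Corollary~\ref{cor: quad prim tang} and the contact-intrinsic nature of primary tangency, the intersection $\Upsilon_i^y\cap\{x_0=0\}$ for $i\geq 1$ coincides with $\pi(\varphi^y({}^nL_i\cap{}^nL_0))=\pi(\varphi^y({}^{n-1}\Gamma_{i-1}))$. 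Thus the restriction of $\Upsilon^y$ to $\{x_0=0\}$ is itself the front of an arboreal Legendrian structure of smaller order. Recognizing this trace via the tilt contactomorphism $S$ of Lemma~\ref{lem: tilt} as the front of a suitable $(n-1)$-dimensional Weinstein embedding of ${}^{n-1}L$, the inductive hypothesis produces a parametric family of diffeomorphisms of $\{x_0=0\}\cong\R^n$ taking the trace to ${}^{n-1}\Gamma$, which I then extend to diffeomorphisms of $\R^{n+1}$ preserving $\{x_0=0\}$.

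After these normalizations, each remaining $\Upsilon_i^y$ is a graphical hypersurface $\{x_0=f_i^y(x)\}$ with $f_i^y\geq 0$ and $\{f_i^y=0\}$ equal to the standard locus $\{h_i=0\}$ prescribed by the normalized trace ${}^{n-1}\Gamma$. Because $h_i$ vanishes transversely on $\{h_i=0\}$ and $f_i^y$ vanishes to exactly second order there (the primary tangency condition with $\Upsilon_0^y$), one can factor $f_i^y=(g_i^y)^2 h_i^2$ with $g_i^y$ a strictly positive smooth function, after which a parametric change of the $x_0$-coordinate along the graph normalizes $g_i^y$ to $1$, yielding $\Upsilon_i^y={}^n\Gamma_i$.

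The main obstacle is handling $i=1,\dots,n$ coherently: normalizing $\Upsilon_i^y$ must not disturb the already-normalized $\Upsilon_j^y$ for $j<i$. This is where the structural results of Section~\ref{s: quad fronts} pay off. Lemma~\ref{lem: tang of tang} controls the iterated tangencies $\tau(\Upsilon_i^y,\Upsilon_j^y)$ and shows that they already agree with the standard ones inside the normalized trace. Corollary~\ref{cor: embed} then pins down precisely where the diffeomorphism straightening $\Upsilon_i^y$ must be supported, and Remark~\ref{rem: quad cover} shows the quadrant decomposition ${}^n\Gamma_i=\bigcup_\eps {}^n\Gamma_i^\eps$ is compatible with working from $i=1$ outward. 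The parametric, relative condition on $\Op K$ is inherited at every step by choosing each normalizing diffeomorphism to be the identity whenever the input already agrees with the standard model.
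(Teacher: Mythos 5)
Your overall skeleton (induction on $n$, normalize the zero-section piece, recognize the trace on $\{x_0=0\}$ as a lower-order front, then restore the remaining pieces one at a time using divisibility by $h_i^2$) is in the right spirit, but there is a genuine gap at the decisive step, and it is exactly the step where all of the paper's technical work lives. After you have normalized $\Upsilon_0^y=\{x_0=0\}$ and the trace on it, you write each remaining piece as $\Upsilon_i^y=\{x_0=\alpha_i^y h_i^2\}$ with $\alpha_i^y>0$ and claim that ``a parametric change of the $x_0$-coordinate along the graph normalizes $g_i^y$ to $1$.'' No such change of coordinates exists in general: a fiberwise reparametrization of $x_0$ that carries $\{x_0=\alpha_i^y h_i^2\}$ to $\{x_0=h_i^2\}$ will move the other graphs $\{x_0=\alpha_j^y h_j^2\}$, and these cannot be separated from $\Upsilon_i^y$ because they are tangent to it along positive-dimensional loci. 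Any diffeomorphism normalizing $\Upsilon_i^y$ must preserve the already-normalized pieces as sets, and the only vector fields available for this are very constrained: the paper isolates the weighted radial field $v_{n-1}=\sum_j 2^{-j}x_j\partial_{x_j}$ (Lemma~\ref{lem: sym}), which preserves every ${}^n\Gamma_j$, and then must solve a Hamilton--Jacobi equation (Proposition~\ref{prop:vf}) to find the multiple $h_t v_{n-1}$ realizing the deformation. That equation is solvable only because the discrepancy $\alpha-1$ has the special form $\beta\prod_{j=1}^{n-1}h_{n,j}^2$ (Proposition~\ref{prop:divisibility}), and \emph{that} form is available only after the primary tangencies $\tau(\Upsilon_n,{}^n\Gamma_j)$ have been normalized for \emph{all} $j=0,\dots,n-1$ --- not just $j=0$. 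Your trace normalization only controls the tangency with $\Upsilon_0$; the loci $\tau({}^n\Gamma_i,{}^n\Gamma_j)$ for $0<j<i$ are cut out by $h_{i,j}=0$ and do not lie in $\{x_0=0\}$, so they are untouched by your induction on the trace. Normalizing them requires the separate inner induction of Lemma~\ref{lem:ind}, which uses vector fields divisible by $\prod_{j>k}h_{n,j}$ precisely so that each stage does not destroy the previous ones.

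A secondary structural difference: the paper's induction removes the \emph{deepest} piece ${}^n\Lambda_n$ and observes that the remainder is the stabilized model ${}^{n-1}\Lambda\times\R$, to which the $1$-parametric inductive hypothesis applies directly; your induction via the trace on $\{x_0=0\}$ would additionally require you to verify that the trace really is the front of a Weinstein embedding of ${}^{n-1}L$ (plausible via Lemma~\ref{lem: tilt}, but not automatic), and even granting that, normalizing an $(n-1)$-dimensional front inside the hyperplane does not normalize the $n$-dimensional hypersurfaces off the hyperplane. You also do not address the sign ambiguity visible already at $n=1$ (the possible need for the reflection $(x_0,x_1)\mapsto(-x_0,x_1)$), which is why the normalizing map in Lemma~\ref{lem:ind} is a diffeomorphism rather than an isotopy at one stage. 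To repair the argument you would need to (i) state and prove the tangency-normalization step (the analogue of Lemmas~\ref{lm:TT} and~\ref{lem:ind}) before invoking divisibility, and (ii) replace the ``change of $x_0$-coordinate'' by a flow along a multiple of $v_{n-1}$ obtained by solving the corresponding Hamilton--Jacobi equation.
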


As usual the case of general pairs $(Y,K)$ follows from the case $Y=D^k$ and $K=S^{k-1}$. 

\subsubsection{Base case $n=0$}\label{s: n=0} The  $k$-parametric version states: the germ of any graphical hypersurface $\Upsilon \subset  \R \times \R^k$ is diffeomorphic to the germ of the zero-graph ${}^0 \Gamma \times \R^k  = \{0\} \times \R^k$. This can be achieved by an isotopy generated by a time-dependent vector field of the form $h_t \partial_{x_0}$. This vector field is zero at infinity if $\Upsilon$ is standard at infinity.
%

\subsubsection{Case $n=1$}\label{s: n=1}
The next case of the induction $n=1$ is elementary but slightly different from the others, so it is more convenient to treat  separately. 

With the setup of the theorem, consider the front 
$\Upsilon = \pi( {}^1 \Lambda) \subset \R^2$, and assume without loss of generality
that the origin is the central point. By induction, we may assume, the front takes the form
$\Upsilon =  \Gamma_0  \cup \Upsilon_1 \subset \R^2$ where $ \Gamma_0 = \{x_0 = 0\}$.
Near the origin, the intersection $\Gamma_0\cap  \Upsilon_1$ and tangency locus $T(\Gamma_0, \Upsilon_1)$ coincide and consist of the origin alone. Moreover, by construction, the origin is a simple tangency, and so  $\Upsilon_1 = \{ x_0 = \alpha x_1^2\}$ with $\alpha(0) \not = 0$.
Now it is elementary to find a time-dependent vector field of the form $h_t x_1 \partial_{x_1}$, hence vanishing on $\Gamma_0$, generating an isotopy taking
$\Upsilon_1$ to either $ \Gamma_1 = \{ x_0 =  x_1^2\}$ or $- \Gamma_1 = \{ x_0 =  -x_1^2\}$. In the former case, we are done;  in the latter case, we may apply the diffeomorphism $(x_0, x_1) \mapsto (-x_0, x_1)$ to arrive at the configuration $\Gamma_0  \cup \Gamma_1$. Finally, it is evident the prior constructions can be performed parametrically, with the vector field zero at infinity if $\Upsilon$ is standard at infinity.

\subsubsection{Inductive step} \label{s: inductive step}

The inductive step takes the following form. Suppose the  fully parametric assertion has been established for dimension $n-1$. Starting from ${}^{n} \Lambda  \subset T^*\R^n$, remove the last smooth piece to obtain ${}^n \Lambda' = {}^{n} \Lambda \setminus  {}^{n} \Lambda_n$, and consider the
 corresponding front  $\Upsilon' = \pi({}^n \Lambda')$. Note that ${}^n \Lambda' = {}^{n-1} \Lambda \times \R \subset T^*(\R^{n-1} \times \R)$, and so by an inductive application of the 1-parametric version of the theorem, we may assume 
 $$
 \xymatrix{
 \Upsilon' =   {}^{n-1} \Gamma \times \R 
 }
 $$ 
  
  Set $\Upsilon_n = \pi({}^n \Lambda_n)$.
  We will find a diffeomorphism $ \R^{n+1}\to \R^{n+1}$ that preserves $ \Upsilon' $ (as a subset, not pointwise), and takes $\Upsilon_n$ to ${}^n \Gamma_n$. Moreover, it will be evident the diffeomorphism can be constructed in  parametric form, including the relative parametric form.  This will complete the inductive step and prove the theorem.

\subsubsection{Two propositions}
 The proof of the inductive step is based on the following 2 propositions.

  \begin{prop}\label{prop:divisibility}
  Fix $n\geq 2$.
  
With the setup of Theorem~\ref{thm: quad unique}, suppose 
  $
  \Upsilon = \bigcup_{i = 0}^{n-1} {}^n \Gamma_i \cup \Upsilon_n
  $ where $\Upsilon_n = \pi({}^n \Lambda_n)$.
  Suppose in addition $ \Upsilon_n$ has primary tangency loci satisfying
  $$
  \xymatrix{
  \tau(\Upsilon_n ,{}^n \Gamma_i) \supset \tau({}^n \Gamma_n ,{}^n \Gamma_i) & i = 0, \ldots, n-1
  }
  $$
 
  Then $\Upsilon_n =\{x_0=\alpha h^2_n\}$ where
$$
\xymatrix{
    \alpha  =1+\beta \prod \limits_{j = 1}^{n-1} h^2_{n, j}
=1+\beta  h^2_{n, 1}\cdots h^2_{n, n-1} 
}
$$

Moreover, the same holds in parametric form.
 \end{prop}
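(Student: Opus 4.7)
The plan is to write $\Upsilon_n$ graphically, translate each primary tangency containment into a second-order vanishing condition for $f - h_n^2$ along a smooth hypersurface, and then combine these conditions via an iterated Hadamard-style divisibility in a well-chosen coordinate system.

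First, since $\Upsilon_n = \pi({}^n\Lambda_n)$ is transverse to the fibers of the front projection $\pi \colon J^1\R^n \to \R^{n+1}$, I may write $\Upsilon_n = \{x_0 = f(x_1, \ldots, x_n)\}$ for a smooth germ $f$ at the origin. By Corollary \ref{cor: quad prim tang}, the primary tangency $\tau({}^n\Gamma_n, {}^n\Gamma_i)$ is the restriction of ${}^n\Gamma_n$ to the smooth hypersurface $Y_i := \{h_{n,i} = 0\}$, and along $Y_i$ the two graphs agree to second order in the sense that $h_n^2 = h_i^2$ and $d(h_n^2) = d(h_i^2)$ there. The containment $\tau(\Upsilon_n, {}^n\Gamma_i) \supset \tau({}^n\Gamma_n, {}^n\Gamma_i)$ then forces $f = h_i^2$ and $df = d(h_i^2)$ on $Y_i$, so that $f - h_n^2$ and $d(f - h_n^2)$ both vanish along $Y_i$. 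Applying Hadamard's lemma twice to the transverse defining function $h_{n,i}$, I obtain a smooth germ $g_i$ with $f - h_n^2 = h_{n,i}^2 \, g_i$, for each $i = 0, 1, \ldots, n-1$.

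Next, the triangular structure recorded in Section \ref{sec: quads} makes $h_{n,i}$ depend only on $x_{i+1}, \ldots, x_n$ with $\partial_{x_{i+1}} h_{n,i} = 1$, so the map $x \mapsto (h_{n,0}(x), \ldots, h_{n,n-1}(x))$ is a local diffeomorphism near the origin. Passing to new coordinates $y_{i+1} := h_{n,i}$, each $Y_i$ becomes the coordinate hyperplane $\{y_{i+1} = 0\}$, and the task reduces to showing that a smooth germ $F$ lying in every ideal $(y_{i+1}^2)$ lies in the product $\prod_i (y_{i+1}^2)$. This follows by iterated Hadamard: writing $F = y_1^2 G_1$, the hypothesis $F \in (y_2^2)$ forces $G_1|_{y_2=0} = \partial_{y_2} G_1|_{y_2=0} = 0$ away from $\{y_1 = 0\}$ and hence everywhere by density, so $G_1 \in (y_2^2)$, and one iterates over the remaining coordinates. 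Applied to $f - h_n^2$ this yields a smooth germ $\beta$ with
$$
f - h_n^2 = \beta \prod_{i=0}^{n-1} h_{n,i}^2 = \beta \, h_n^2 \prod_{j=1}^{n-1} h_{n,j}^2,
$$
which rearranges to $f = \alpha h_n^2$ with $\alpha = 1 + \beta \prod_{j=1}^{n-1} h_{n,j}^2$, as asserted.

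The parametric version will run verbatim: Hadamard's lemma, the coordinate change $y_{i+1} = h_{n,i}$, and the iterated divisibility all preserve smooth dependence on external parameters, producing smooth families $\beta^y$ and $\alpha^y$. I expect the main technical step to be the iterated Hadamard divisibility---the tangency-to-second-order translation is immediate from Corollary \ref{cor: quad prim tang}, but combining the $n$ separate divisibilities $f - h_n^2 \in (h_{n,i}^2)$ into divisibility by their product requires the transversality supplied by the triangular structure of the $h_{n,i}$ together with a careful iterated Taylor expansion in the new coordinates.
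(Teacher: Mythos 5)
Your proposal is correct and follows essentially the same route as the paper's proof: translate each containment of primary tangencies into second-order vanishing of the graphing function along $\{h_{n,i}=0\}$, extract divisibility by $h_{n,i}^2$ via Hadamard, and combine the factors using the transversality coming from the triangular structure of the $h_{n,i}$. The only (cosmetic) difference is organizational --- the paper first divides by $h_n^2$ and peels off the remaining factors $h_{n,j}^2$ one at a time using that $h_n\neq 0$ on a dense subset of $\{h_{n,j}=0\}$, whereas you work with $f-h_n^2$ throughout and make the final combination explicit via the coordinate change $y_{i+1}=h_{n,i}$, which is a slightly more careful rendering of the paper's appeal to ``transversality of the level-sets.''
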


    \begin{proof} 
    We have $\Upsilon_n = \{ x_0 = g\}$ for some $g$. Since $\tau(\Upsilon_n ,{}^n \Gamma_0)\supset \tau({}^n \Gamma_n ,{}^n \Gamma_0) = \{h_n = 0\}$, we must have 
    $g$ is divisible by $h_n^2$, hence $g=\alpha h^2_n$, for some $\alpha$. Next, for any $j \not = 0, n$, by Lemma~\ref{lem: tang}, 
    $\tau({}^n \Gamma_n ,{}^n \Gamma_j)$ is cut out  by  $h_{n, j} = 0$. 
     Since $\tau(\Upsilon_n ,{}^n \Gamma_j)\supset \tau({}^n \Gamma_n ,{}^n \Gamma_j)$, and
     $h_n \not = 0$ along a dense subset of $\{h_{n, j} = 0\}$, taking the ratio $g/h_n^2$ shows  that we must have $\alpha = 1 + \delta$, where $\delta$ is divisible by $h_{n,j}^2$. Repeating this argument, and using the transversality of the level-sets of the collection $h_{n, j}$, we conclude that $\delta=\beta  h^2_{n, 1}\cdots h^2_{n, n-1} $.  
         \end{proof}

    \begin{prop}\label{prop:vf}
    
     Fix $n\geq 2$.

  With the setup of Theorem~\ref{thm: quad unique}, suppose 
  $
  \Upsilon = \bigcup_{i = 0}^{n-1} {}^n \Gamma_i \cup \Upsilon_n
  $ where $\Upsilon_n = \pi({}^n \Lambda_n)$.
    Suppose in addition $\Upsilon_n =\{x_0=\alpha h^2_n\}$ where
$$
\xymatrix{
    \alpha  =1+\beta \prod \limits_{j = 1}^{n-1} h^2_{n, j}
=1+\beta  h^2_{n, 1}\cdots h^2_{n, n-1} 
}
$$

Consider the family $\Upsilon_{n,t} =  \{x_0=(1 - t + t\alpha) h^2_n\}$ so that $\Upsilon_{n,0} = {}^n \Gamma_n$, $\Upsilon_{n,1} = \Upsilon_n$. 

Then there exist functions $g_t: \R^{n+1} \to \R$ such that the vector fields 
$$
\xymatrix{
g_t v_{n-1}=  g_t \sum_{i = 0 }^{n-1} x_i  \frac 1{2^i} \partial_{x_i} = g_t x_0 \partial_{x_0} + \frac 12 g_t  x_1 \partial_{x_1} + \cdots + \frac1{2^{n-1}}g_t x_{n-1} \partial_{x_{n-1}}
}
$$
generate an isotopy $\varphi_t:\R^{n+1} \to \R^{n+1}$ such that $\varphi_t( \Upsilon_{n, 0}) = \Upsilon_{n, t}$.

In addition, the  functions $h_t$, hence vector fields $h_t v_{n-1}$, are divisible by the product
$
\prod_{j = 1}^{n-1} h_{n, j}.
$

Moreover, all of the above holds in parametric form.

    \end{prop}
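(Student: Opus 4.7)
The plan is to apply Moser's trick to the family $\Upsilon_n^t = \{F_t = 0\}$, where
$$F_t = x_0 - A_t h_n^2, \qquad A_t = 1+t\sigma, \qquad \sigma = \beta\prod_{j=1}^{n-1} h_{n,j}^2.$$
We seek $h_t$ such that $X_t = h_t v_{n-1}$ satisfies the Moser equation $\partial_t F_t + X_t F_t \in (F_t)$ as germs at the origin. The conceptual heart of the argument is the algebraic identity
$$h_n - 2v_{n-1}(h_n) = (-1)^{n-1}\Big(\prod_{j=1}^{n-2} h_{n,j}\Big)\,h_{n,n-1}^2,$$
which I would prove by induction on $n\ge 2$. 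The base case $n=2$ is direct: $v_1(h_2)=x_1/2$, giving $h_2 - 2v_1(h_2) = -x_2^2 = -h_{2,1}^2$. For the inductive step, write $h_n = x_1 - h_{n-1}(x_2,\ldots,x_n)^2$; the restriction of $v_{n-1}$ to $x_2,\ldots,x_{n-1}$ is $\tfrac{1}{2}$ times a shifted copy of $v_{n-2}$ acting on $h_{n-1}$, and under the identification $h_{n-1,j-1}(x_2,\ldots,x_n) = h_{n,j}(x_{j+1},\ldots,x_n)$ the inductive hypothesis plugs in to give the claimed formula after a short manipulation.

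This identity says that the failure of $v_{n-1}$ to preserve $\{x_0 = h_n^2\}$ is concentrated precisely along the primary tangency loci $\{h_{n,j} = 0\}$. Separately, for every $i<n$ one has $v_{n-1}(h_i) = h_i/2$, hence $v_{n-1}(x_0 - h_i^2) = x_0 - h_i^2$, so $v_{n-1}$ is automatically tangent to each lower stratum ${}^n\Gamma_i$ and any vector field $h_t v_{n-1}$ preserves them setwise. This takes care of the extra strata of $\Upsilon$ without further work.

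To solve Moser's equation, compute
$$v_{n-1}(F_t)\big|_{F_t=0} = h_n\bigl[A_t\bigl(h_n - 2v_{n-1}(h_n)\bigr) - v_{n-1}(A_t)\,h_n\bigr]$$
and combine with $-\partial_t F_t = \sigma h_n^2$. Canceling the non-zero-divisor $h_n$ modulo $F_t$, substituting the key identity, and canceling the common factor $h_{n,n-1}^2\prod_{j=1}^{n-2}h_{n,j}$ (which is coprime to $F_t$) from both sides, the equation reduces to
$$h_t\cdot\bigl[A_t + t\cdot(\text{correction term})\bigr] \equiv (-1)^{n-1}\beta\, h_n \prod_{j=1}^{n-2} h_{n,j} \pmod{F_t}.$$
At $t=0$ the bracketed factor equals $1$, so it is invertible in the local ring for $t\in[0,1]$ after shrinking the neighborhood of the origin. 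This yields an explicit smooth $h_t$, and the formula manifests the required divisibility by the product of the tangency functions, so $h_t v_{n-1}$ vanishes on each tangency locus and the generated isotopy fixes these loci pointwise.

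The parametric and relative parametric versions follow verbatim, since all manipulations depend smoothly on $\beta$, and when the data is standard over a closed set $K$ (so $\beta \equiv 0$ on $\Op(K)$) one obtains $h_t \equiv 0$ there and hence $\varphi_t = \Id$ on $\Op(K)$. The main technical obstacle is the inductive verification of the key identity together with the bookkeeping of the various cancellations modulo $(F_t)$, which requires careful tracking of how the Liouville-type scaling $v_{n-1}$ interacts with the nested-square recursion defining $h_n$ and the tangency functions $h_{n,j}$.
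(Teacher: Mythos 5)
Your proposal is correct and follows essentially the same route as the paper's proof: both fix the vector field $v_{n-1}$ and solve a Moser/Hamilton--Jacobi equation for the coefficient $h_t$, both hinge on the same algebraic fact measuring the failure of $v_{n-1}$ to preserve $\{x_0=h_n^2\}$, and both absorb the $\beta$-corrections (which are divisible by the product of the tangency functions) into a factor that is invertible near the origin. Your key identity
$$
h_n - 2v_{n-1}(h_n) \;=\; (-1)^{n-1}\,h_{n,n-1}^2\prod_{j=1}^{n-2}h_{n,j}
$$
is correct --- it follows from $v_n(h_n)=\tfrac12 h_n$ (Lemma~\ref{lem: sym}) together with $\partial h_n^2/\partial x_n=-(-2)^n\prod_{j=0}^{n-1}h_{n,j}$ (Lemma~\ref{lem:deriv}) --- and it is in fact the accurate form of the paper's telescoping computation, whose final displayed line omits the square on the last factor $h_{n,n-1}$ (test $n=2$: $h_2^2-x_1h_2=-x_2^2h_2$, not $-x_2h_2$). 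The one inaccuracy in your write-up is the sentence claiming your formula ``manifests the required divisibility'': your solution $h_t=(\mathrm{unit})\cdot\beta\,h_n\prod_{j=1}^{n-2}h_{n,j}$ is divisible by $h_{n,0}h_{n,1}\cdots h_{n,n-2}$ but \emph{not} by $h_{n,n-1}=x_n$, whereas the proposition literally asserts divisibility by $\prod_{j=1}^{n-1}h_{n,j}$. This mismatch is inherited from the statement rather than introduced by your argument (the paper's own computation, once the squared factor is restored, produces the same $h_t$, and the Moser equation determines $h_t$ essentially uniquely), and it is harmless for the downstream application in Lemma~\ref{lem:ind}: the field $h_tv_{n-1}$ has no $\partial_{x_n}$-component, so the locus $\{h_{n,n-1}=0\}$ is preserved regardless.
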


The   following  lemmas  are needed for the proof of Proposition \ref{prop:vf}.

\begin{lemma}\label{lem: sym} For all $0\leq i\leq n$, the vector field 
$$
\xymatrix{
v_i =  \sum_{j = 0 }^n x_j  \frac 1{2^j} \partial_{x_j} = x_0 \partial_{x_0} + \frac 12 x_1 \partial_{x_1} + \cdots + \frac1{2^i} x_i\partial_{x_i}
}
$$ preserves each ${}^n \Gamma_j \subset \R^{n+1}$, for $j = 0, \ldots, i$.
\end{lemma}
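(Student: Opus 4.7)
The plan is to reduce the tangency claim to a single algebraic identity satisfied by the functions $h_j$, then prove that identity by a short induction on $j$.

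First I would translate the claim into an analytic statement. The hypersurface ${}^n\Gamma_j$ is the zero set of $F_j = x_0 - h_j(x_1,\dots,x_j)^2$, so $v_i$ preserves ${}^n\Gamma_j$ if and only if $v_i(F_j)$ vanishes on $\{F_j = 0\}$. Because $h_j$ depends only on $x_1,\dots,x_j$ with $j \leq i$, the components of $v_i$ with index above $j$ annihilate $F_j$, and we may write
\[
v_i(F_j) \;=\; x_0 \;-\; 2h_j \sum_{k=1}^{j} \frac{x_k}{2^{k}}\,\partial_{x_k} h_j.
\]
Restricting to ${}^n\Gamma_j$, where $x_0 = h_j^2$, the right-hand side becomes $h_j\bigl(h_j - 2\sum_{k=1}^{j} \tfrac{x_k}{2^{k}}\partial_{x_k} h_j\bigr)$. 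Thus the lemma reduces to the weighted Euler-type identity
\[
h_j \;=\; 2\sum_{k=1}^{j} \frac{x_k}{2^{k}}\,\partial_{x_k} h_j, \qquad j = 1, \dots, n.
\]

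Next I would prove this identity by induction on $j$, using the recursive definition $h_j(x_1,\dots,x_j) = x_1 - h_{j-1}(x_2,\dots,x_j)^2$. The base $j=1$ is $h_1 = x_1$, and the right-hand side is $2 \cdot \tfrac{x_1}{2} \cdot 1 = x_1$. For the inductive step, set $\tilde h = h_{j-1}(x_2,\dots,x_j)$ and apply the inductive hypothesis with the shifted variables $y_m = x_{m+1}$, obtaining
\[
\tilde h \;=\; 2 \sum_{m=1}^{j-1} \frac{x_{m+1}}{2^{m}}\,\partial_{x_{m+1}} \tilde h.
\]
Then $\partial_{x_1} h_j = 1$ and $\partial_{x_{m+1}} h_j = -2\tilde h\,\partial_{x_{m+1}} \tilde h$ for $m=1,\dots,j-1$, so
\[
2\sum_{k=1}^{j} \frac{x_k}{2^{k}}\,\partial_{x_k} h_j \;=\; x_1 \;-\; 2\tilde h \sum_{m=1}^{j-1} \frac{x_{m+1}}{2^{m}}\,\partial_{x_{m+1}} \tilde h \;=\; x_1 - \tilde h^2 \;=\; h_j,
\]
using the inductive hypothesis in the penultimate step. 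This closes the induction, and combined with the computation of $v_i(F_j)$ above, establishes the lemma.

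The only nontrivial step is the weighted Euler identity, and its proof is not an obstacle once one notices that the weights $2^{-k}$ are exactly tuned to absorb the factor $2$ produced by each squaring in the nested recursion $h_j = x_1 - h_{j-1}^2$; after this observation the induction is mechanical.
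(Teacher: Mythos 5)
Your proof is correct and takes essentially the same route as the paper: both reduce the lemma to the weighted Euler (eigenfunction) identity $v(h_j)=\tfrac12 h_j$ and prove it by induction on the nesting of the recursion $h_j=x_1-h_{j-1}^2$. The paper phrases the induction as $v(h_{n,k})=\tfrac1{2^{k+1}}h_{n,k}$, descending in $k$, which is exactly your identity after the variable shift, and then concludes $v(x_0-h_n^2)=x_0-h_n^2$.
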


\begin{proof}
Since ${}^n \Gamma_j \subset \R^{n+1}$ is independent of $x_{j+1}, \ldots, x_n$, it suffices to prove the case $i = j = n$.
Recall ${}^n \Gamma_n$ is the zero-locus of $f = x_0 - h_n^2$. We will show $v(h_n) = \frac 1 2 h_n$ and so $v(f) = f$. Recall $h_n = h_{n, 0} = x_1 - h_{n, 1}^2$, and in general $h_{n, j} = x_{j+1} - h_{n, j+1}^2$ with $h_{n, n-1} = x_n$. Thus $v_n(h_{n, n-1}) = \frac 1{2^n} h_{n, n-1}$, and by induction, $v(h_{n, j}) =  \frac 1{2^{j+1}} h_{n, j}$, so in particular $v(h_{n, 0}) = v(h_n) = \frac 1 2 h_n$.
\end{proof}

\begin{remark}
 In the context of the inductive step outlined above, we  will use Lemma~\ref{lem: sym} in particular the vector field
$$
\xymatrix{
v_{n-1} =  \sum_{i = 0 }^{n-1} x_i  \frac 1{2^i} \partial_{x_i} = x_0 \partial_{x_0} + \frac 12 x_1 \partial_{x_1} + \cdots + \frac1{2^{n-1}} x_{n-1} \partial_{x_{n-1}}
}
$$ 
to move $\Upsilon_n$ to ${}^n \Gamma_n$. The lemma confirms we will preserve $\Upsilon' =  {}^{n-1} \Gamma \times \R = \bigcup_{i = 0}^{n-1} {}^n \Gamma_i$.
\end{remark}


\begin{lemma}\label{lem:deriv} For any $0 \leq j < i \leq n$, and $1 \leq k \leq i$, we have 
$$
\frac{\p h^2_{i}}{\p x_k}= -(-2)^{k} \prod\limits_{j=0}^{k-1}  h_{i, j}= -(-2)^{k} h_{i, 0} h_{i, 1} \cdots h_{i, k-1}
  $$
  \end{lemma}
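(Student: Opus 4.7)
The plan is to reduce the computation to a statement about $\partial h_{i,j}/\partial x_k$ and prove that by induction. The quantity $h_i^2$ is only a single extra chain-rule step away from $h_{i,0}$, so the natural auxiliary claim is: for all $0 \leq j < k \leq i$,
\[
\frac{\partial h_{i,j}}{\partial x_k} \;=\; (-2)^{\,k-j-1}\,\prod_{\ell=j+1}^{k-1} h_{i,\ell},
\]
with the convention that an empty product equals $1$. Granting this, the lemma follows immediately by writing $\partial h_i^2/\partial x_k = 2 h_{i,0}\,\partial h_{i,0}/\partial x_k$, applying the auxiliary formula with $j=0$, and simplifying $2\cdot(-2)^{k-1} = -(-2)^k$.

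The auxiliary claim is proved by induction on $k-j \geq 1$. For the base case $k-j=1$ (i.e.~$k=j+1$), the defining recursion $h_{i,j}=x_{j+1}-h_{i,j+1}^2$ shows $\partial h_{i,j}/\partial x_{j+1}=1$, which matches $(-2)^{0}$ with empty product. For the inductive step with $k>j+1$, one differentiates the same recursion: since $h_{i,j+1}=h_{i-j-1}(x_{j+2},\dots,x_i)$ does not involve $x_{j+1}$ (only the terms $x_{j+2},\dots,x_i$ appear), we get
\[
\frac{\partial h_{i,j}}{\partial x_k} \;=\; -2\,h_{i,j+1}\,\frac{\partial h_{i,j+1}}{\partial x_k},
\]
and the inductive hypothesis, applied with $j$ replaced by $j+1$, gives $\partial h_{i,j+1}/\partial x_k = (-2)^{k-j-2}\prod_{\ell=j+2}^{k-1} h_{i,\ell}$. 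Multiplying out yields the desired formula.

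There is no real obstacle: the only thing to double-check is the bookkeeping of indices in the product and the sign, which I verified in the small cases $k=1,2,3$ while setting up the auxiliary claim. Once the auxiliary formula is in hand, the lemma is a one-line consequence via the chain rule $\partial h_i^2/\partial x_k = 2 h_{i,0}\,\partial h_{i,0}/\partial x_k$.
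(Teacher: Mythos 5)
Your proposal is correct and is essentially the paper's argument: both proceed by iterating the chain rule on the nested recursion $h_{i,j}=x_{j+1}-h_{i,j+1}^2$, the only (cosmetic) difference being that the paper inducts on $\partial h_{i,j}^2/\partial x_k$ directly while you induct on the unsquared $\partial h_{i,j}/\partial x_k$ and apply one final chain-rule step. The index and sign bookkeeping in your auxiliary formula checks out.
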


\begin{proof}
Recall $h_i = h_{i, 0}$ and  the inductive formulas 
$h_{i, j} = x_{j+1} - h^2_{i, j+1}
$ 
with $h_{i, i-1} = x_i$.
Thus we have
$$
\frac{\p h^2_{i, j}}{\p x_{j+1}}= 2 h_{i, j}
 \qquad \frac{\p h^2_{i, j}}{\p x_k}= -2 h_{i, j} \frac{\p h^2_{i, j+1}}{\p x_k}  \quad k > j+1 
  $$
and the assertion follows.
\end{proof}

%
%
%

\begin{proof}[Proof of  Proposition \ref{prop:vf}.]
Suppose
  $
  \Upsilon = \bigcup_{i = 0}^{n-1} {}^n \Gamma_i \cup \Upsilon_n
  $ where $\Upsilon_n$ is the graph of 
$$
\xymatrix{
    H_\beta  =(1+\beta \prod \limits_{j = 1}^{n-1} h^2_{n, j})h_n^2
=(1+\beta  h^2_{n, 1}\cdots h^2_{n, n-1})h_n^2 
}
$$

%
%
 
 Our aim is to find  a normalizing isotopy, generated by a time-dependent vector field $v_t$, taking the graph $\Upsilon_n = \{ x_0 = H_\beta\}$ to the standard graph ${}^n \Gamma_n = \{ x_0 = h_n^2\}$, i.e.~to the graph where $\beta = 0$, while preserving $
 \bigcup_{i = 0}^{n-1} {}^n \Gamma_i$.   Thus  
for any infinitesimal deformation in the class of functions $h_\beta$, we seek a vector field $v$ realizing 
the deformation and preserving the functions $h_0, \ldots, h_{n-1}$, i.e.~we seek to solve the system 
    \begin{equation}\label{eq1}
    \begin{split}
   & \dot h_i =0, \quad i=0,\dots, n-1\\
   &\dot H_\beta=\gamma   \prod\limits_{ j = 0}^{n-1} h^2_{n,j}  = \gamma h_{n, 0}^2 \cdots h_{n, n-1}^2
       \end{split}
    \end{equation}
   where $\dot H_\beta$ denotes the derivative of $H_\beta$ with respect to $v$, and $\gamma$ is any given smooth function.
        
   Let  $\Lambda_\beta \subset T^*\R^{n+1}$ denote the conormal  to the graph  of $h_\beta$. Any vector field $v = \sum_{j = 0}^{n}v_j \p/\p_{x_j}$ on $\R^{n+1}$ extends to a Hamiltonian vector field $v_H$ on $T^*\R^{n+1}$ with Hamiltonian  $H=\sum_{j = 0}^{n}p_jv_j$. We will find $v$ deforming the graph of $h_\beta$
   by finding $H$ so that $v_H$ deforms the conormal to the graph $\Lambda_\beta$.

  In general, for a function $f :\R^n \to \R$, with graph $\Gamma_{f} =\{ x_0 = f\} \subset \R^{n+1}$, denote the conormal to the graph  by $T^*_{\Gamma_f} \subset T^* \R^{n+1}$.    With respect to the contact form
     $p_1dx_1+\dots p_ndx_n - x_{0}dp_{0}$, the conormal
     $T^*_{\Gamma_f}$ 
         is given by the generating function
    $F(x_1,\dots,x_n)=-p_{0}f(x_1,\dots, x_n),$ i.e.~ it is cut out by the equations
   \begin{align*}
    p_i &=-p_0\frac{\p f}{\p x_i}, \quad  i=1,\dots,n\\
  x_{0}  &= f(x_1,\dots, x_n)
   \end{align*}
   
    Hence given a Hamiltonian $H=\sum_{j = 0}^{n}p_jv_j$, its restriction to the conormal $T^*_{\Gamma_f}$ is given by  
    $$
    H|_{T^*_{\Gamma_f}} = p_0 v_{0}|_{x_{0}=f} - p_{0} \sum\limits_{ j=1}^n\frac{\p f}{\p x_j}v_j|_{x_{0}=f}
    $$  
    and so further restricting to $p_{0}=1$, we find the Hamilton-Jacobi equation
    $$ 
    H|_{T^*_{\Gamma_f} \cap \{ p_0 = 1\}} = v_{0}|_{x_{0}=f} -\sum\limits_{ i = 1}^n\frac{\p f}{\p x_i}v_i|_{x_{0}=f} = 
    v_0|_{x_0 = f} - \dot f
    $$
%
     
Let us apply the above to $H_\beta$ and $h_i$, for $ i = 0, \ldots, n-1$. It allows us to transform system~\eqref{eq1}
into the system
      \begin{equation}\label{eq2}
    \begin{split}
   &v_{0}(x_1,\dots, x_n, h_i)  -\sum\limits_{j=1}^n\frac{\p h_i}{\p x_j}v_j=0,\quad  i=0,\dots, n-1\\
   & v_{0}(x_1,\dots, x_n, H_\beta ) -\sum\limits_{j=1}^n\frac{\p H_\beta}{\p x_j}v_j  = \gamma   \prod\limits_{ j = 0}^{n-1} h^2_{n,j}   
       \end{split}
    \end{equation}
    Note we can  reformulate Lemma~\ref{lem: sym} from this viewpoint:  when $\beta = \gamma=0$, 
    given any function $h = h(x_1,\dots, x_n)$,
    the functions 
    \begin{align}\label{eq:solution}
   v_{0}=x_{0}h,
  v_1=\frac{x_1}2h, v_2=\frac{x_2} 4 h,\dots, v_{n}= \frac{x_n}{2^n}h   \end{align}
    satisfy system \eqref{eq2}.

      Now let  us choose $v_0, v_1, \dots v_{n-1}$ as in \eqref{eq:solution} but set $v_n=0$. This will satisfy the first $n$ equations of system \eqref{eq2}, independently of $\beta, \gamma$. From hereon, we will restrict to this class of vector fields and focus on the last equation of  system \eqref{eq2}.
      
     Let us first set $\beta=0$, so that $h_\beta = h^2_n$,  and solve  system \eqref{eq2} in this case.
   Using Lemma~\ref{lem:deriv}, we can then rewrite the left-hand side of the last equation of system \eqref{eq2}    in the form  
$$
  v_{0}(x_1,\dots, x_n, h^2_n)   -\sum\limits_{j=1}^{n-1}\frac{\p h^2_n}{\p x_j}v_j\\
 =h\left(h_n^2   -\sum\limits_{j=1}^{n-1}\frac{\p h^2_n}{\p x_j}\frac{x_j}{2^j} \right)\\
  =h\left(h_n^2   +\sum\limits_{j=1}^{n-1}  (-1)^{j} x_j \prod\limits_{k=0}^{j-1}  h_{n, k} \right)\\
$$
Using $h_n = h_{n, 0 }$, $h_{n, k} -  x_{k+1} = -  h_{n, k-1}^2$, we can inductively simplify the term in  parentheses
  \begin{align*}
  h_n^2   +\sum\limits_{j=1}^{n-1}  (-1)^{j} x_j \prod\limits_{k=0}^{j-1}  h_{n, k}
  & 
  =   h_n(h_n   - x_1 + \sum\limits_{j=2}^{n-1}  (-1)^{j} x_j \prod\limits_{k=1}^{j-1}  h_{n, k} ) \\
  & 
  =   h_n(-h^2_{n, 1} + \sum\limits_{j=2}^{n-1}  (-1)^{j} x_j \prod\limits_{k=1}^{j-1}  h_{n, k} ) \\
  & 
  =   h_nh_{n, 1} (-h_{n, 1} + x_2 +  \sum\limits_{j=3}^{n-1}  (-1)^{j} x_j \prod\limits_{k=2}^{j-1}  h_{n, k} ) \\
& 
\cdots\\
  & 
  =  (-1)^{n-1} h_n h_{n, 1}h_{n, 2} \cdots h_{n, n-1}   = (-1)^{n-1}  \prod\limits_{j=0}^{n-1}  h_{n, j} 
   \end{align*}
%

  Thus for $\beta=0$,  the last equation of  system \eqref{eq2} reduces  to
  $$
  (-1)^{n-1}  h \prod\limits_{j=0}^{n-1}  h_{n, j} =\gamma   \prod\limits_{ j = 0}^{n-1} h^2_{n,j}   
  $$  
  and hence can be solved by
  $$
  h = (-1)^{n-1} \gamma   \prod\limits_{ j = 0}^{n-1} h_{n,j}   
  $$  
  
  Now for general $\beta$, we will similarly calculate  the left-hand side
  of the last equation of  system \eqref{eq2}. 
 To simplify the formulas, set 
  $$
  \xymatrix{
  F = \prod\limits_{ j = 0}^{n-1} h_{n,j}
  &
  \theta = \beta  F^2
  }
  $$
  Thus we have $H_\beta = (1 + \theta)h_n^2$, and our prior calculation showed when $\beta = 0$, 
 the last equation of  system \eqref{eq2} took the form 
  $$
  (-1)^{n-1} h F = \gamma F^2
  $$
so was solved by $ h = (-1)^{n-1}  \gamma F$.

 For general $\beta$, after factoring out the function $h$ to be solved for, the left-hand
  side of the last equation of  system \eqref{eq2}  takes the form 
   \begin{align*}
  & (-1)^{n-1} (1+\theta) F - h_n^2 \sum\limits_{j=1}^{n-1}\frac1{2^j}\frac{\p\theta}{\p x_j}x_j
  \end{align*}  
%
%
Thus the equation itself takes the form
\begin{equation}
\label{eq:h-j}
   ((-1)^{n-1} (1+\theta) F - h_n^2 \sum\limits_{j=1}^{n-1}\frac1{2^j}\frac{\p\theta}{\p x_j}x_j)h = \gamma F^2
  \end{equation}

Since  $\theta = \beta F^2$, we have
$$ 
\frac{\p\theta}{\p x_j}=F^2\frac{\p\beta}{\p q_j}+\beta\frac{\p F^2}{\p q_j} = F^2\frac{\p\beta}{\p q_j}+2 F \beta\frac{\p F}{\p q_j}
$$
 and hence $ \frac{\p\theta}{\p x_j}$ is divisible by $F$.
 Thus we can divide equation~\eqref{eq:h-j} by $F$, and after renaming $\gamma$, write equation~\eqref{eq:h-j}  in the form
       \begin{align*}
   (1+ O(x))h = \gamma F
  \end{align*}  
  where $O(x)$ vanishes at the origin. We conclude we can solve the equation by  $h = (1+ O(x))^{-1}\gamma F$.
  
  This completes the proof of Proposition \ref{prop:vf}.  \end{proof}
%

\subsubsection{Proof of Theorem \ref{thm: quad unique}}
In this section, we  use Propositions~\ref{prop:divisibility} and Proposition~\ref{prop:vf}  to complete the  inductive step outlined 
in~\ref{s: inductive step}, and thus, complete the proof of   Theorem \ref{thm: quad unique}.    Let us assume  $n\geq 2$.

Then
  $
  \Upsilon = \Upsilon' \cup \Upsilon_n$ where $ \Upsilon' = \bigcup_{i = 0}^{n-1} {}^n \Gamma_i $, $\Upsilon_n = \pi({}^n \Lambda_n)$. 
  We will implement the following strategy. Suppose for some $0 < k \leq n-1$, we have moved $\Upsilon_n$, while preserving $\Upsilon'$, so that we have the relation of primary tangencies
  $$
  \xymatrix{
  \tau(\Upsilon_n, {}^n \Gamma_{j}) \supset \tau({}^n \Gamma_n, {}^n \Gamma_{j}) & j> k
  }$$
Then using Proposition~\ref{prop:divisibility} and Proposition~\ref{prop:vf}, or alternatively, the cases $n=0, 1$ when respectively $k = n-1, n-2$, we will move  $\Upsilon_n$, while preserving $\Upsilon'$, so that we have the relation of
primary tangencies 
  $$
  \xymatrix{
  \tau(\Upsilon_n, {}^n \Gamma_{j}) \supset \tau({}^n \Gamma_n, {}^n \Gamma_{j}) & j\geq k
  }$$
Proceeding in this way,  we will arrive at $k=0$, where all primary tangencies have been normalized. Then a final 
application of Proposition~\ref{prop:divisibility} and Proposition~\ref{prop:vf} will complete the proof.

    \begin{figure}[h]
\includegraphics[scale=0.4]{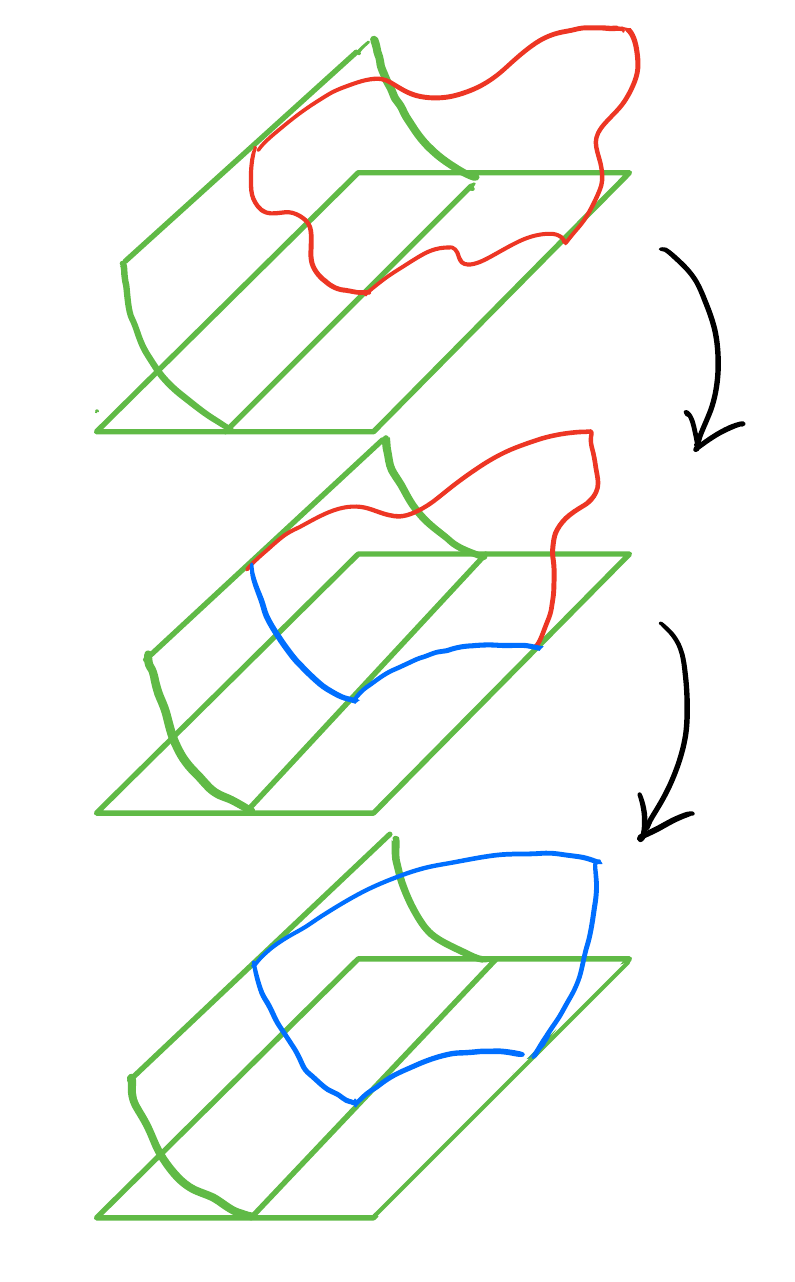}
\caption{The strategy of the proof: inductively normalize tangencies.}
\label{fig:Firstintersections}
\end{figure}
To pursue this argument, we need the following control over primary tangencies.

 \begin{lemma}\label{lm:TT} Fix $0 \leq k < j\leq n-1$. 
 
We have
 $$
 \xymatrix{
 \tau(\tau(\Upsilon_n,{}^n \Gamma_k),\tau({}^n \Gamma_j,{}^n \Gamma_k))\supset 
 \tau(\Upsilon_n,{}^n \Gamma_j)   \cap \tau({}^n \Gamma_j,{}^n \Gamma_k) 
 }$$
  
 Moreover, when $k = n-2$,  the tangency of $\tau(\Upsilon_n,{}^n \Gamma_{n-2})$ and $\tau({}^n \Gamma_{n-1},{}^n \Gamma_{n-2})$ is nondegenerate.
 
 \end{lemma}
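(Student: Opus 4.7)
\emph{Proof proposal.} The plan is to lift the intersection of primary tangencies from the fronts to the Legendrians, where the geometry is transparent via the clean codimension-one intersection structure preserved by Weinstein hypersurface embeddings. Let $x \in \tau(\Upsilon_n, {}^n \Gamma_j) \cap \tau({}^n \Gamma_j, {}^n \Gamma_k)$. Since each smooth piece ${}^n \Lambda_n, {}^n \Lambda_j, {}^n \Lambda_k$ maps diffeomorphically onto its front, and at a primary tangency of two fronts the coinciding tangent planes together with their coorientations single out a common Legendrian lift, there is a unique point $\lambda \in {}^n \Lambda_n \cap {}^n \Lambda_j \cap {}^n \Lambda_k$ with $\pi(\lambda) = x$.

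Next I use that Weinstein hypersurface embeddings together with the inductive normalizing diffeomorphisms (lifted via jet prolongation to contactomorphisms of $J^1\R^n$) preserve the clean codimension-one intersection structure of Corollary~\ref{cor: clean intersect}. Consequently $\Sigma_{n,k} := {}^n \Lambda_n \cap {}^n \Lambda_k$ and $\Sigma_{j,k} := {}^n \Lambda_j \cap {}^n \Lambda_k$ are smooth codim-one submanifolds of ${}^n \Lambda_k$ passing through $\lambda$, and their front projections $A := \tau(\Upsilon_n, {}^n \Gamma_k)$ and $B := \tau({}^n \Gamma_j, {}^n \Gamma_k)$ are smooth codim-one subsets of ${}^n \Gamma_k$ containing $x$. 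The three fronts $\Upsilon_n,\,{}^n \Gamma_j,\,{}^n \Gamma_k$ share a common tangent plane at $x$ by transitivity of primary tangency; tracking this through the clean intersections yields $T_x A = T_x B$. To identify the quadratic discrepancy as a genuine primary tangency, I apply Lemma~\ref{lem: tilt signs} to move into $T^*\R^n$ coordinates and invoke the lower-dimensional analogue of Lemma~\ref{lem: tang} inside ${}^n \Gamma_k \simeq \R^n$: the standard-model versions of $A, B$ are cut out by $h_{n,k} = 0$ and $h_{j,k} = 0$, whose difference $h_{j,k+1}^2 - h_{n,k+1}^2$ furnishes the expected quadratic contact. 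For the general $\Upsilon_n$, the Weinstein embedding is locally a diffeomorphism that preserves the $2$-jet of $A$ at $x$, so the primary tangency persists and the containment $\tau(\Upsilon_n,{}^n \Gamma_j) \cap \tau({}^n \Gamma_j,{}^n \Gamma_k) \subset \tau(A, B)$ follows.

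For the nondegeneracy when $k = n-2$, the standard identities $h_{n, n-2} = x_{n-1} - x_n^2$ and $h_{n-1, n-2} = x_{n-1}$ give quadratic discrepancy $-x_n^2$, which is rank-one and nondegenerate in the transverse direction $x_n$. Since nondegeneracy of the leading quadratic form is an open condition on the $2$-jet, and the deformation of $A$ away from the standard $\{h_{n,n-2} = 0\}$ produced by the Weinstein embedding preserves the $2$-jet structure of the clean intersection, the tangency of $\tau(\Upsilon_n, {}^n \Gamma_{n-2})$ and $\tau({}^n \Gamma_{n-1}, {}^n \Gamma_{n-2})$ remains nondegenerate. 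I expect the main difficulty to lie in making the reduction in the second paragraph fully rigorous: tracking how the Weinstein-preserved codim-one clean Legendrian intersection transports, via the front projection, to the second-order tangency of $(A, B)$ inside ${}^n \Gamma_k$, and in particular showing that the relevant $2$-jet of the defining function of $A$ at $x$ is determined by the intersection with ${}^n \Lambda_k$. Once this $2$-jet correspondence is set up, both the containment and the nondegeneracy are immediate consequences.
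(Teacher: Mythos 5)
Your opening move is the same as the paper's: lift the point to the triple intersection $ {}^n\Lambda_n\cap{}^n\Lambda_j\cap{}^n\Lambda_k$ and exploit the clean codimension-one intersection structure of Corollary~\ref{cor: clean intersect}. The gap is in how you bring the primary tangency back down to the front. The assertion that ``the Weinstein embedding is locally a diffeomorphism that preserves the $2$-jet of $A$ at $x$'' is not justified and is the wrong mechanism: the Weinstein embedding acts on $J^1\R^n$, not on the front, and the induced deformation of $\Upsilon_n$ --- hence of $A=\tau(\Upsilon_n,{}^n\Gamma_k)$ --- is exactly what is \emph{not} controlled at the level of jets at this stage (controlling it is the content of Propositions~\ref{prop:divisibility} and~\ref{prop:vf}). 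The correct transfer, which is what the paper does, is: (i) the two loci $\Sigma_{n,k},\Sigma_{j,k}\subset{}^n\Lambda_k$ are the images, under the contactomorphism restricted to the piece indexed by $k$, of the model intersections, so their mutual primary tangency inside ${}^n\Lambda_k$ is inherited verbatim from the model; and (ii) since ${}^n\Gamma_k$ is already normalized to the standard graph, $\pi|_{{}^n\Lambda_k}$ has no critical points, so this primary tangency descends to a primary tangency of $A$ and $B$ in ${}^n\Gamma_k$. Primary tangency is a diffeomorphism-invariant notion; no claim about the embedding preserving $2$-jets of subsets of the front is needed or available.

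A second, related omission is the quadrant decomposition ${}^n\Lambda_n=\bigcup_\eps{}^n\Lambda_n^\eps$, which you drop entirely but which the paper needs. In the full model the loci $\{h_{n,k}=0\}$ and $\{h_{j,k}=0\}$ inside ${}^n\Gamma_k$ do \emph{not} meet only along their primary tangency: e.g.\ for $n=3$, $j=2$, $k=0$ the difference $x_2^2-(x_2-x_3^2)^2=x_3^2(x_3^2-2x_2)$ vanishes on a second branch $\{2x_2=x_3^2\}$ along which the two loci cross rather than osculate. So your computation ``the difference $h_{j,k+1}^2-h_{n,k+1}^2$ furnishes the expected quadratic contact'' is false at a general point of $A\cap B$; it is only the hypothesis $y\in\tau(\Upsilon_n,{}^n\Gamma_j)$, fed through Corollary~\ref{cor: clean intersect} within a single quadrant $\eps$ (where, by Corollary~\ref{cor: embed}, pairwise intersections are exactly boundary faces and exactly primary tangencies), that places $\tilde y$ on the correct branch. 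Your model computation for the nondegeneracy at $k=n-2$ is fine, but its transfer to the deformed configuration suffers from the same misattribution and should instead go through the diffeomorphism chain described above.
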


\begin{proof} We will assume $k>0$ and leave the case $k=0$ as an exercise. 

Fix a point
$$
\xymatrix{
y\in  \tau(\Upsilon_n,{}^n \Gamma_j)   \cap \tau({}^n \Gamma_j,{}^n \Gamma_k) 
}
$$
In particular $y\in \Upsilon_n$ and so $y = \pi(\tilde y)$ for some $\tilde y\in {}^n \Lambda_n$.  Recall 
${}^n \Lambda_n= \bigcup_{\eps} {}^n \Lambda_n^\eps$ and so $\tilde y \in {}^n \Lambda_n^\eps$, for some $\eps$.

Note $y\in  \tau(\Upsilon_n,{}^n \Gamma_j)$ implies $\tilde y$ is in the closure of the clean codimension one intersection of    ${}^n \Lambda_n,  {}^n \Lambda_j$.

By Corollary~\ref{cor: clean intersect}, this locus intersects ${}^n \Lambda_n^\eps$ precisely along 
${}^n \Lambda^\eps_n \cap  {}^n \Lambda^\eps_j$ and so $\tilde y \in {}^n \Lambda_j^\eps$.

Similarly, note  $y\in  \tau({}^n \Gamma_j,{}^n \Gamma_k)$ implies $\tilde y$ is in the closure of the clean codimension one intersection of    ${}^n \Lambda_j,  {}^n \Lambda_k$. By Corollary~\ref{cor: clean intersect}, this locus intersects ${}^n \Lambda_j^\eps$ precisely along 
${}^n \Lambda^\eps_j \cap  {}^n \Lambda^\eps_k$ and so $\tilde y \in {}^n \Lambda_k^\eps$.

Thus altogether $\tilde y \in {}^n \Lambda^\eps_n \cap  {}^n \Lambda^\eps_j \cap  {}^n \Lambda^\eps_k = 
 ({}^n \Lambda^\eps_n \cap  {}^n \Lambda^\eps_k) \cap  ({}^n \Lambda^\eps_j \cap  {}^n \Lambda^\eps_k)$.
 
By Corollary~\ref{cor: clean intersect},  the intersections 
${}^n \Lambda^\eps_n \cap  {}^n \Lambda^\eps_k$ and  ${}^n \Lambda^\eps_j \cap  {}^n \Lambda^\eps_k$ are closures of clean codimension one intersections, hence their projections lie in the primary tangencies 
$\tau(\Upsilon_n,{}^n \Gamma_k)$ and $\tau({}^n \Gamma_j,{}^n \Gamma_k)$.
Moreover, ${}^n \Lambda^\eps_n \cap  {}^n \Lambda^\eps_k$ and  ${}^n \Lambda^\eps_j \cap  {}^n \Lambda^\eps_k$ intersect along their primary tangency. Since $\pi$ restricted to ${}^n \Lambda_k$ has no critical points, 
the projection of this primary tangency is again a primary tangency.
Hence $y\in \tau(\tau(\Upsilon_n,{}^n \Gamma_k),\tau({}^n \Gamma_j,{}^n \Gamma_k))$, proving the asserted containment.

We leave the nondegeneracy of the case $k = n-2$ to the reader.
\end{proof}

 Now we are ready to inductively normalize the primary tangencies.
 
\begin{lemma}\label{lem:ind}
Fix $0 \leq k<n-1$.

Suppose
  $$
  \xymatrix{
  \tau(\Upsilon_n, {}^n \Gamma_{j}) = \tau({}^n \Gamma_n, {}^n \Gamma_{j}) & j> k
  }$$
 
Then there exists a diffeomorphism $\psi:\R^{n+1} \to \R^{n+1}$ preserving 
 $ \Upsilon' = \bigcup_{i = 0}^{n-1} {}^n \Gamma_i$
   such that
 $$
\xymatrix{
 \tau(\psi(\Upsilon_n),{}^n \Gamma_j)=\tau({}^n\Gamma_n,{}^n \Gamma_j) & j\geq k
 }
 $$
 
 Moreover, when $k \not = n-2$, the diffeomorphism is an isotopy.
\end{lemma}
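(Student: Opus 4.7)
The plan is to reduce the normalization of $\tau(\Upsilon_n, {}^n\Gamma_k)$ to a lower-dimensional instance of Theorem~\ref{thm: quad unique parametric} applied inside the smooth hypersurface ${}^n\Gamma_k \simeq \R^n$, and then to extend the resulting diffeomorphism of ${}^n\Gamma_k$ to a germ of a diffeomorphism of $\R^{n+1}$ preserving all of $\Upsilon'$.

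First I would restrict the configuration to ${}^n\Gamma_k$. Using Corollary~\ref{cor: quad prim tang} together with the product identity $T_{i,j} = \R^{j+1} \times {}^{n-j-1}\Gamma_{i-j-1}$, the collection of traces $\{{}^n\Gamma_j \cap {}^n\Gamma_k\}_{j \neq k}$ forms a standard lower-dimensional arboreal configuration inside ${}^n\Gamma_k \simeq \R^n$. The trace $V_n := \tau(\Upsilon_n, {}^n\Gamma_k)$ is the front of the Legendrian obtained by symplectic reduction from the clean codimension-one intersection ${}^n\Lambda_n \cap {}^n\Lambda_k$ supplied by Corollary~\ref{cor: clean intersect}, so $V_n$ is itself a smooth cooriented hypersurface transverse to the projection. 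By Lemma~\ref{lm:TT} combined with Lemma~\ref{lem: tang of tang}, the standing hypothesis $\tau(\Upsilon_n, {}^n\Gamma_j) = \tau({}^n\Gamma_n, {}^n\Gamma_j)$ for $j > k$ forces the primary tangencies of $V_n$ with each $\tau({}^n\Gamma_j, {}^n\Gamma_k)$ to coincide with those of $\tau({}^n\Gamma_n, {}^n\Gamma_k)$ for all $j > k$. This is precisely the input needed to invoke the inductive instance of Theorem~\ref{thm: quad unique parametric} inside ${}^n\Gamma_k$, producing a parametric diffeomorphism $\tilde\psi$ of ${}^n\Gamma_k$ which fixes the standard pieces and sends $V_n$ onto $\tau({}^n\Gamma_n, {}^n\Gamma_k)$. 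The exception $k = n-2$ corresponds precisely to the inductive reduction bottoming out at the $n = 1$ base case from Section~\ref{s: n=1}, where a global reflection $(x_0, x_1)\mapsto(-x_0, x_1)$ may be required; this is the origin of the non-isotopy case in the conclusion.

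The final step is to lift $\tilde\psi$ to a germ of a diffeomorphism $\psi$ of $\R^{n+1}$ preserving every ${}^n\Gamma_j$ for $j \neq k$ as well as each tangency locus $T_{n, j}$ for $j > k$. Concretely, I would choose a collar of ${}^n\Gamma_k$ compatible with all these hypersurfaces simultaneously, split the germ as ${}^n\Gamma_k \times \R$ so that each ${}^n\Gamma_j$ and each $T_{n,j}$ becomes a cylinder over its intersection with ${}^n\Gamma_k$, and take $\psi := \tilde\psi \times \mathrm{id}_\R$. Equivalently, one can realise $\tilde\psi$ by an isotopy inside ${}^n\Gamma_k$ and lift its infinitesimal generator to a time-dependent vector field on $\R^{n+1}$ tangent to each ${}^n\Gamma_j$ and each $T_{n,j}$, and integrate. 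The main obstacle is establishing the simultaneous compatibility of this collar (equivalently, the simultaneous tangential lift): one has to exhibit a common transverse framing along ${}^n\Gamma_k$ for all the hypersurfaces in question, which follows from the triangular structure and mutual transversality of the level sets of the functions $\{h_{i,j}\}$ recorded in Section~\ref{sec: quads}. Once this extension is in hand, $\tau(\psi(\Upsilon_n), {}^n\Gamma_k) = \tau({}^n\Gamma_n, {}^n\Gamma_k)$ by construction, and for $j > k$ we have $\tau(\psi(\Upsilon_n), {}^n\Gamma_j) = \psi(\tau(\Upsilon_n, {}^n\Gamma_j)) = \psi(\tau({}^n\Gamma_n, {}^n\Gamma_j)) = \tau({}^n\Gamma_n, {}^n\Gamma_j)$, since $\psi$ preserves both ${}^n\Gamma_j$ and ${}^n\Gamma_j \cap T_{n,j} = \tau({}^n\Gamma_n, {}^n\Gamma_j)$. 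The parametric version is automatic, and the dichotomy between isotopy and merely a diffeomorphism is inherited from the base case.
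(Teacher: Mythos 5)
Your overall strategy---restrict to ${}^n\Gamma_k\simeq\R^n$, use Lemma~\ref{lm:TT} to see that the trace of $\Upsilon_n$ has the correct tangencies with the standard traces, normalize it there, and then extend back to $\R^{n+1}$---is the same as the paper's. The first half of your argument (the reduction and the identification of the tangency data, via Lemma~\ref{lm:TT} and Lemma~\ref{lem: tang of tang}) matches the paper. But the extension step, which is where all the difficulty lives, has a genuine gap. You propose to choose a collar of ${}^n\Gamma_k$ in which every ${}^n\Gamma_j$ ``becomes a cylinder over its intersection with ${}^n\Gamma_k$'' and then set $\psi=\tilde\psi\times\mathrm{id}$, justifying the existence of such a collar (equivalently, of a simultaneous tangential lift of an \emph{arbitrary} normalizing vector field) by ``the mutual transversality of the level sets of the $h_{i,j}$.'' This conflates two different things: the level sets of the $h_{i,j}$ are indeed mutually transverse, but the hypersurfaces ${}^n\Gamma_j=\{x_0=h_j^2\}$ are \emph{tangent} to ${}^n\Gamma_k$ along exactly the loci you care about---that is the defining feature of the configuration. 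Near a point of $\tau({}^n\Gamma_j,{}^n\Gamma_k)$ the pair looks like $\{x_0=0\}$, $\{x_0=x_1^2\}$; the fold $\{x_0=x_1^2\}$ is not a union of fibers of any collar of $\{x_0=0\}$ (its natural ``fibers'' are parabolas tangent to $\{x_0=0\}$, whereas collar fibers must be transverse), so the product structure you need does not exist, and no common transverse framing argument can produce it. A tangential lift of a \emph{generic} vector field on ${}^n\Gamma_k$ preserving the traces is a nontrivial system of constraints at the tangency loci, not an automatic consequence of transversality.

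The paper avoids this entirely by never producing a generic $\tilde\psi$: Propositions~\ref{prop:divisibility} and~\ref{prop:vf} produce a normalizing isotopy of the reduced configuration whose generator has the explicit Euler-type form $h_t\sum_{i=k+1}^{n-1}2^{-i}x_i\partial_{x_i}$ with $h_t$ divisible by $\prod_{j=k+1}^{n-1}h_{n,j}$. The ``lift'' is then simply the completion $h_t\sum_{i=0}^{n-1}2^{-i}x_i\partial_{x_i}$ of the same field on $\R^{n+1}$, which is tangent to every ${}^n\Gamma_i$ by the scaling computation of Lemma~\ref{lem: sym} and preserves the loci $T_{n,j}$ by the divisibility of $h_t$. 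In other words, the special algebraic form of the vector field is not a convenience but the mechanism that makes the extension possible; your argument discards it and then cannot recover it. A secondary, more repairable issue: you invoke Theorem~\ref{thm: quad unique parametric} in lower dimension to get a diffeomorphism ``fixing the standard pieces,'' but that theorem only asserts a diffeomorphism carrying the whole front to the model---the relative statement you actually need is precisely what Propositions~\ref{prop:divisibility} and~\ref{prop:vf} supply, so the citation should be to those (or to the inductive step itself), not to the absolute theorem.
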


\begin{proof} We  will assume $k < n-3$. We 
 leave the elementary cases $k = n-2, n-3$ to the reader. They can be  deduced  from the parametric versions of the cases $n=0, 1$ presented in \ref{s: n=0}, \ref{s: n=1} respectively.

Throughout what follows, we use the projection $\R^{n+1} \to \R^n$ to identify ${}^n \Gamma_k = \R^n$.

On the one hand, we have
  $$
  \xymatrix{
  \tau({}^n  \Gamma_j, {}^n \Gamma_k) = \R^k \times {}^{n-k - 1} \Gamma_{j - k -1}
  & k < j < n
  }
  $$

On the other hand, by Lemma~\ref{lm:TT} and assumption, we have 
$$
 \xymatrix{
 \tau(\tau(\Upsilon_n,{}^n \Gamma_k),\tau({}^n \Gamma_j,{}^n \Gamma_k))=
 \tau (\Upsilon_n,{}^n \Gamma_j)\cap {}^n \Gamma_k
=
 \tau ({}^n \Gamma_n,{}^n \Gamma_j)\cap {}^n \Gamma_k 
 & k < j < n
 }$$
Hence within ${}^n \Gamma_k = \R^n$, the loci $\tau(\Upsilon_n,{}^n \Gamma_k)$ 
and  $\tau({}^n \Gamma_n,{}^n \Gamma_k)$  have the same tangencies with 
  $$
  \xymatrix{
  \tau({}^n  \Gamma_j, {}^n \Gamma_k) = \R^k \times {}^{n-k - 1} \Gamma_{j - k -1}
  & k < j < n
  }
  $$

 Thus Proposition~\ref{prop:divisibility} and Proposition \ref{prop:vf} provide a time-dependent vector field
 of the form 
 $$
 \xymatrix{
 v_t =h_t \sum\limits_{i=k+1}^{n-1}\frac1{2^i}x_i  \partial_{x_i}
 }
 $$
 generating an isotopy
$\varphi:\R^{n-k} \to \R^{n-k}$ satisfying 
$$
\xymatrix{
\varphi(\tau(\Upsilon_n,{}^n \Gamma_k)) = \tau({}^n \Gamma_n,{}^n \Gamma_k)
}
$$
In addition,  the function $h_t$, hence vector field $v_t$, is divisible by  the
 product
$
\prod_{j = k+1}^{n- 1} h_{n, j},
$
and thus $\varphi$ preserves its zero-locus.

Let us complete $v_t$ to the vector  field
 $$
 \xymatrix{
 V_t=h_t \sum\limits_{i=0}^{n-1}\frac1{2^i}x_i \partial_{x_i} 
 }$$ 
 and consider the isotopy
$\psi:\R^{n+1} \to \R^{n+1}$ generated by $V_t$.

 Then $\psi$  satisfies
  $$
\xymatrix{
\psi(\tau(\Upsilon_n,{}^n \Gamma_k)) = \tau({}^n \Gamma_n,{}^n \Gamma_k)
}
$$
It also preserves ${}^n \Gamma_i$, for $0 \leq i \leq  n-1$, as well as 
  $
  \tau(\Upsilon_n, {}^n \Gamma_{j}) = \tau({}^n \Gamma_n, {}^n \Gamma_{j})
  $,
  for $j > k$.
 In addition,  it preserves
 $$
  \xymatrix{
  \tau({}^n  \Gamma_j, {}^n \Gamma_k) = \R^k \times {}^{n-k - 1} \Gamma_{j - k -1}
  & k < j < n
  }
  $$
  since this is the zero-locus of $h_{n, j}$.  
\end{proof}
 
 Finally, let us use the lemma to complete the inductive step of the proof of Theorem~\ref{thm: quad unique} as outlined above.
   Suppose for some $0 < k \leq n-1$, we have moved $\Upsilon_n$, while preserving $\Upsilon'$, so that we have the sought-after primary tangencies 
  $$
  \xymatrix{
\tau(\Upsilon_n, {}^n \Gamma_{j}) = \tau({}^n \Gamma_n, {}^n \Gamma_{j}) & j> k
  }$$
Then using Lemma~\ref{lem:ind},  we can move  $\Upsilon_n$, while preserving $\Upsilon'$, so that we have the sought-after primary tangencies 
  $$
  \xymatrix{
  \tau(\Upsilon_n, {}^n \Gamma_{j}) = \tau({}^n \Gamma_n, {}^n \Gamma_{j}) & j\geq k
  }$$
Proceeding in this way,  we arrive at $k=0$, where all primary tangencies have been normalized. Now a final 
application of Proposition~\ref{prop:divisibility} and Proposition~\ref{prop:vf} move  $\Upsilon_n$ to ${}^n\Gamma_n$, 
while preserving $\Upsilon'$, and thus complete the proof of Theorem \ref{thm: quad unique}.
  
 \subsection{Conclusion of the proof}\label{sec:proof-cone-arb}
We are now ready to prove Proposition \ref{prop:unique-unsigned}. As a consequence we establish  Theorem \ref{thm:unique-signed}, and since all the above also holds parametrically this also establishes the parametric version Theorem \ref{thm:unique-signed-parametric}.

 \begin{proof}[Proof of Proposition \ref{prop:unique-unsigned}]
 Take  any point $\lambda$ in the front $H:= \pi(\Lambda)$  and let $\pi^{-1}(\lambda)=\{\lambda_1,\dots,\lambda_k\}$. Let    $\Lambda_1,\dots, \Lambda_k$ be germs  of $\Lambda$ at these  points  of arboreal types $(\sT_j,n)$, $n(\sT_j)=n_j$. 
 We need to show that the germ of the front $H$ at $\lambda$ is diffeomorphic to the germ of  a model  front $H_\sT$, where $\sT$ is a signed rooted tree obtained from $\bigsqcup T_j$ by adding the root $\rho$ and adjoining it to the roots $\rho_j$ of the trees $T_j$ by edges $[\rho\rho_j]$.  The signs of all edges of  the trees $T_j$ are preserved, while previously unsigned edges
 $\rho_j\alpha$  get a  sign $\eps(\nu,L,\alpha)$, see \eqref{eq:arb-sign}.
 
We proceed by induction on the number of vertices in the signed rooted tree $\sT = (T, \rho, \eps)$.

The base case of a $(\sA_1, m)$-front $H \subset  \R^m$ is the same geometry as appearing in \ref{s: n=0}:
any graphical hypersurface $H \subset  \R \times \R^{m-1}$ is isotopic to the germ of the zero-graph $ \{0\} \times \R^{m-1}$.

For the inductive step, fix  a   rooted tree $\sT = (T, \rho, \eps)$, and as usual set $n = |n(\sT)|$. Consider  a $(\sT, m)$-front $H \subset \R^{m}$, with by necessity $m\geq n$.

Fix  a leaf vertex $\beta \in \ell(\sT)$, which always exists as long as $\sT\not = \sA_1$. Consider the smaller signed rooted tree $\sT' = \sT\setminus \beta$, and 
 the corresponding 
$(\sT', m)$-front $H' = H \setminus \mathring H[\beta] \subset  \R^m$, where $\mathring H[\beta] \subset H$ is the interior of the smooth piece indexed by $\beta$. 
By induction,
 we may assume
$$
\xymatrix{
H' = H_{\sT'} \times \R^{m-n+1} \subset    \R^m
}
$$
Thus it remains to normalize the smooth piece $H[\beta]$.

Let $\sA_\beta = (A_\beta, \rho, \eps_\beta)$ be the linear signed rooted subtree of  $\sT = (T, \rho, \eps)$ with vertices  
$v(A_\beta) = \{ \alpha \in v(T) \, |\, \alpha\leq \beta\}$. Set  $d=  v(\sT) \setminus v(\sA_\beta) =  n(\sT) \setminus n(\sA_\beta)$ to be the complementary vertices. 

Consider the $(\sA_\beta, m)$-front $K \subset H$ given by the union $K = \bigcup_{\alpha \in n(\sA_\beta)} K[\alpha]$ of the smooth pieces of $H \subset \R^m$ indexed by $\alpha \in n(\sA_\beta)$. 
Note for $\sA'_\beta = \sA_\beta \cap \sT'$, and $K'  = K \cap H'$, we  already have 
$$
\xymatrix{
K' = H_{\sA_\beta'} \times \R^{m-n+1+d} \subset  \R^m
}
$$
and seek to normalize the smooth piece $K[\beta] = H[\beta]$.

Now we can apply Theorem \ref{thm: quad unique} to normalize $K[\beta]$ viewed as the final smooth piece of
$K$. More specifically, we can apply Theorem \ref{thm: quad unique} to normalize $K[\beta]$ while preserving $K'$ and viewing the complementary directions $\R^{m-n+1+d}$ as parameters, see Figure \ref{fig:parametric}. This insures we preserve $H'$ and hence do not disturb its already arranged normalization.

This concludes the proof of Proposition \ref{prop:unique-unsigned}. \end{proof}

       \begin{figure}[h]
\includegraphics[scale=0.4]{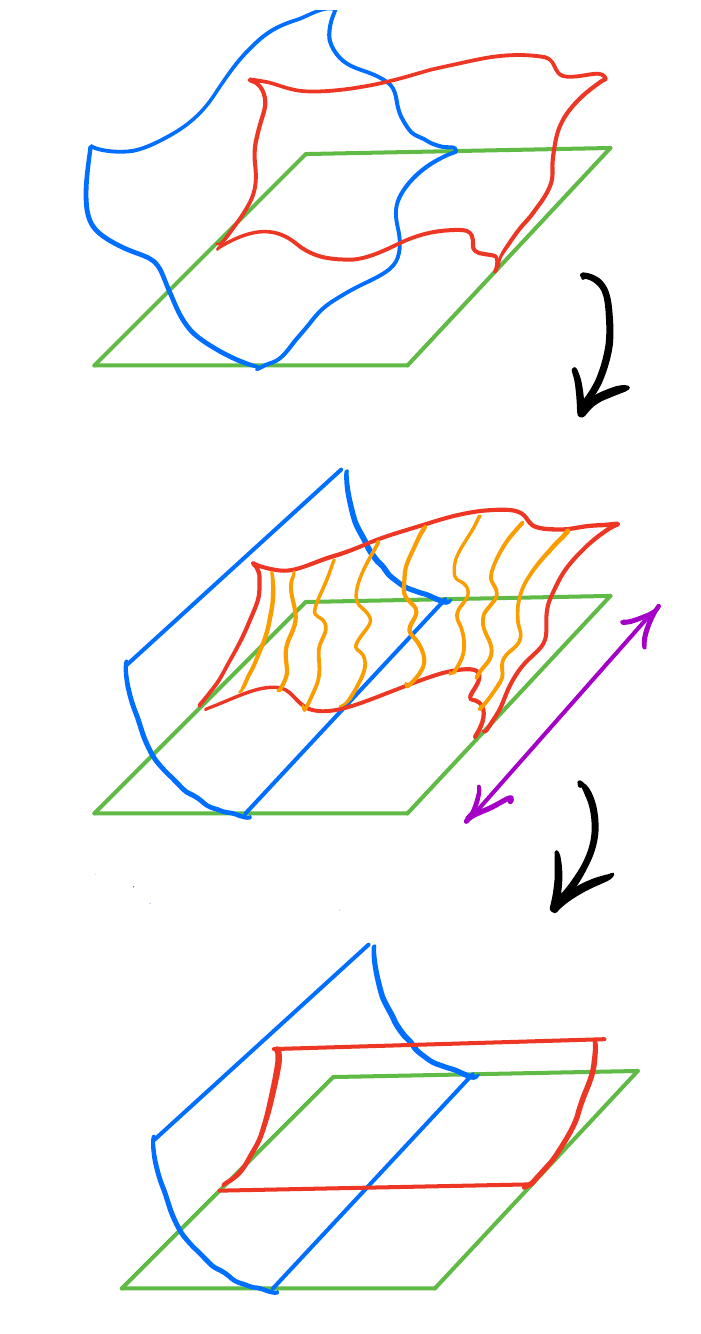}
\caption{Treating the complementary directions as parameters.}
\label{fig:parametric}
\end{figure}

 \end{document}